\newtheorem{theorem}{Theorem}
\newtheorem{lemma}[theorem]{Lemma}
\newtheorem{corollary}[theorem]{Corollary}
\newtheorem{proposition}[theorem]{Proposition}
\newcounter{listnumber}
\newcommand{\F}{\mathcal{F}}
\newcommand{\G}{\mathcal{G}}
\renewcommand{\H}{\mathcal{H}}
\newcommand{\N}{\mathcal{N}}
\renewcommand{\P}{\mathcal{P}}
\renewcommand{\S}{\mathcal{S}}
\newcommand{\R}{\mathbb{R}}
\renewcommand{\vert}{\mathrm{vert}}
\newcommand{\V}{\mathcal{V}}
\newcommand{\cupdot}{\Cup}
\DeclareMathOperator{\nullity}{nullity}
\DeclareMathOperator{\rank}{rank}
\DeclareMathOperator{\supp}{supp}
\DeclareMathOperator{\facets}{facets}
\DeclareMathOperator{\conv}{conv}
\DeclareMathOperator{\gr}{gr}
\author{Roger E.~Behrend}
\address{Roger E.~Behrend, School of Mathematics, Cardiff University, Cardiff, CF24 4AG, UK}
\email{behrendr@cardiff.ac.uk}
\title[Fractional Perfect $b$-Matching Polytopes]{Fractional Perfect $b$-Matching Polytopes\\ I: General Theory}
\begin{document}
\begin{abstract}
The fractional perfect $b$-matching polytope of an undirected graph~$G$ is
the polytope of all assignments of nonnegative real numbers to the edges of~$G$
such that the sum of the numbers over all edges incident to any
vertex~$v$ is a prescribed nonnegative number~$b_v$.
General theorems which provide conditions for nonemptiness, give a formula for the dimension,
and characterize the vertices, edges and face lattices of such polytopes are obtained.
Many of these results are expressed in terms of certain spanning subgraphs of $G$ which are associated with subsets or elements
of the polytope.  For example, it is shown that an element~$u$ of the fractional perfect $b$-matching polytope of $G$ is a vertex of the
polytope if and only if each component of the graph of~$u$
either is acyclic or else contains exactly one cycle with that cycle having odd length,
where the graph of $u$ is defined to be the spanning
subgraph of~$G$ whose edges are those at which~$u$ is positive.
\end{abstract}
\keywords{graphs, polytope, perfect matching}
\subjclass[2010]{52B05; 05C50, 05C70, 52B11, 90C27, 90C35}
\maketitle
{\small\tableofcontents}
\newpage
\section{Introduction}
The focus of this paper, and its expected sequels~\cite{Beh13b,Beh13c}, is the fractional perfect $b$-matching polytope of a graph.
For any finite, undirected graph~$G$, which may contain loops and multiple edges, and
any assignment~$b$ of nonnegative real numbers to the vertices of~$G$, this polytope, denoted~$\P(G,b)$,
is defined to be the set of all assignments of nonnegative real numbers to the
edges of~$G$ such that the sum of the numbers over all edges incident to any
vertex is the prescribed value of $b$ at that vertex.

Certain fractional perfect $b$-matching polytopes,
or polytopes which are affinely isomorphic to these,
have been studied and used in the contexts of
combinatorial matrix classes (see, for example, the book by Brualdi~\cite{Bru06}), and
combinatorial optimization (see, for example, the books by Korte and Vygen~\cite{KorVyg12}, or Schrijver~\cite{Sch03}).
The terminology `fractional perfect $b$-matching polytope' is derived mainly from the latter context, and
will be discussed further in Section~\ref{relsect}.

\subsection{Results and structure of paper}\label{summ}
The primary aim of this paper is to present theorems concerning the nonemptiness, dimensions, vertices, edges
and faces of arbitrary such polytopes, together with uniform, and in most cases self-contained, proofs.

The main theorems apply to an arbitrary finite graph~$G$, which may be nonbipartite.
However (as is often the case with results related to matchings of graphs),
many of these theorems take simpler forms if~$G$ is bipartite, so these forms will also be given.

A list of the main results of this paper, including those which are restricted to the case of bipartite $G$, is as follows.
\begin{list}{$\bullet$}{\setlength{\labelwidth}{2mm}\setlength{\leftmargin}{4mm}\setlength{\itemsep}{1.4mm}}
\item\emph{Condition for $\P(G,b)$ to be nonempty:} Theorem~\ref{nonemptth}.\\
Bipartite case: Theorem~\ref{bipnonemptth}.
\item\emph{Results for the dimension of $\P(G,b)$:} Corollaries~\ref{dim} and~\ref{dimcor1}.
\item\emph{Results for the vertices of $\P(G,b)$:} Corollary~\ref{vertsizecor}, and
Theorems~\ref{vertth}, \ref{explvert} and~\ref{vertth2}.\\
Bipartite case: Corollary~\ref{bipvertcor}.
\item\emph{Results for the edges of $\P(G,b)$:} Theorem~\ref{edgeth} and Corollary~\ref{edgeth2}.\\
Bipartite case: Corollary~\ref{bipedgeth}.
\item\emph{Results for the faces of $\P(G,b)$}: Theorems~\ref{facedim}, \ref{faceth1}, \ref{faceth}, \ref{facelattth},
\ref{suppcond} and~\ref{bipsuppcond}.\\
Bipartite case: Corollary~\ref{bipsuppcond1}.
\end{list}

Many of these results are expressed in terms of certain spanning subgraphs of $G$
(i.e., subgraphs of~$G$ with the same vertex set as that of~$G$) which are associated with subsets or elements
of~$\P(G,b)$.  These spanning subgraphs, which will be referred to as the graphs of~$\P(G,b)$,
will be defined in~\eqref{grX} and~\eqref{grx},
and it will be found in Theorem~\ref{facelattth} that the set of all such graphs
forms a lattice which is isomorphic to the face lattice of~$\P(G,b)$.

The structure of the paper is as follows.  In Section~\ref{mat}, certain alternative forms,
involving the incidence matrix or a generalized adjacency matrix of $G$,
will be given for $\P(G,b)$.  In Section~\ref{nonempt}, results which give
conditions for $\P(G,b)$ to be nonempty, or
to contain an element whose value at each edge of $G$ is positive, will be derived.  In Section~\ref{inc}, some relevant general results for
graphs, and their incidence matrices, will be obtained.  In Sections~\ref{poly} and~\ref{facelatt}, some relevant general results
for polytopes will be obtained. In Section~\ref{facesPGb}, results for the faces, dimension, vertices and edges of $\P(G,b)$ will
be derived.  The proof of each of the main results of Section~\ref{facesPGb} will involve a relatively simple combination
of a general result for graphs from Section~\ref{inc} with a general result for polytopes from Section~\ref{poly}. In Section~\ref{facelattPGb},
further results for the vertices, edges, faces and graphs of $\P(G,b)$ will be derived.  The proofs of each of the main results of Section~\ref{facelattPGb}
will involve a relatively simple application of a general result for polytopes from Section~\ref{facelatt} to the context of $\P(G,b)$.  Finally,
in Section~\ref{furthcond}, some additional results for the graphs of $\P(G,b)$ will be obtained, using certain results from Section~\ref{nonempt}.

For the sake of completeness, this paper includes some previously-known results.
However, many of these results have
appeared previously in the literature only under slightly less general conditions (for example, graphs without
loops or multiple edges), in terms of slightly different objects (for example, matrices
rather than graphs), or with somewhat different proofs (for example, those in which the aspects which depend on graph theory
and the aspects which depend on polytope theory are interspersed throughout the proof, rather than being considered separately until the final step).

\subsection{Conventions and notation}\label{convnot}
The main conventions and notation (and some standard facts), which will be used in this paper are as follows.

Throughout the paper, $G$ is a finite, undirected graph which, unless stated otherwise,
may be nonbipartite, and may contain loops and multiple edges.  Furthermore,~$V$ and~$E$ are the vertex
and edge sets of~$G$, and~$b$ is a function from~$V$ to the nonnegative real numbers.

It will always be assumed that $V$ is nonempty, but, unless stated otherwise, that $E$ may be empty.

The set of all edges incident with vertex~$v$ of~$G$ will be denoted as~$\delta_G(v)$,
with the implication that a loop attached to $v$ appears once rather than twice in~$\delta_G(v)$.

A cycle in~$G$ of length~1 is taken to be a vertex together with a loop attached to that vertex,
and a cycle in~$G$ of length~2 is taken to be two distinct vertices together with two distinct edges, each of which connects those
vertices.
With these conventions, the following standard facts,
which are often applied only to simple graphs (i.e., those without loops or multiple edges), are valid.
The graph~$G$ is bipartite if and only if~$G$ does not contain any odd-length cycles.
For a connected graph~$G$, $|E|+1=|V|$ if and only if $G$ is acyclic (i.e., a tree),
and $|E|=|V|$ if and only if $G$ contains exactly one cycle.

For subsets $U$ and $W$ of $V$, the set of all edges of~$G$ which connect a vertex
of~$U$ and a vertex of~$W$ will be denoted as $G[U,W]$, i.e.,
\begin{equation}G[U,W]:=\{e\in E\mid\text{the endpoints of }e\text{ are  }u\text{ and }w,\text{ for some }u\in U\text{ and }w\in W\}.\end{equation}
Some simple properties of such sets are that,
for any $U_1,U_2,U_3\subset V$, $G[U_1,U_2]=G[U_2,U_1]$ and $G[U_1,U_2\cup U_3]=G[U_1,U_2]\cup G[U_1,U_3]$.
Also, a vertex cover of $G$ (i.e., a subset of $V$ which contains an endpoint of every edge of $G$)
is any $C\subset V$ for which $G[V\setminus C,V\setminus C]=\emptyset$.

The incidence matrix of $G$ will be denoted as $I_G$.  See also the beginning of Section~\ref{mat}.

The partitioning of a set into a union of finitely-many pairwise disjoint subsets will be expressed using
the notation $\cupdot$.  More specifically, for sets $U,W_1,W_2,\ldots,W_n$, the statement
$U=W_1\cupdot W_2\cupdot\ldots\cupdot W_n$ will mean that $U=W_1\cup W_2\cup\ldots\cup W_n$
and $W_i\cap W_j=\emptyset$ for each $i\ne j$.

The notation $\subset'$ will be defined near the beginning of Section~\ref{facelatt}.

Matrices and vectors whose rows and columns are indexed by finite sets will often be used.  If such a matrix or vector is
written out as an explicit array of entries, then an ordering of the elements of each associated index set needs to be chosen.
However, all of the results of this paper are independent of these choices, and hence no such choices will be made.

The rank and nullity of a real matrix $A$, with respect to the field $\R$, will be denoted as $\rank(A)$ and $\nullity(A)$, respectively.

For a finite set $N$, the vector space of all functions from~$N$ to~$\R$ will be denoted as~$\R^N$, with~$\R^\emptyset$ taken to be $\{0\}$.
The value of $x\in\R^N$ at~$i\in N$ will be denoted as $x_i$,
so that~$x$ is regarded as a vector whose entries are
indexed by $N$.  A vector $x\in\R^N$ will be called strictly positive if $x_i>0$ for each $i\in N$.
For $X\subset\R^N$ and $x\in\R^N$, the supports of~$X$ and~$x$ will be denoted as
$\supp(X)$ and $\supp(x)$. See~\eqref{supp}--\eqref{suppx} for definitions. The convex hull
of $X$ will be denoted as $\conv(X)$.

The set of vertices, set of facets and
face lattice of a polytope~$P$ will be denoted as $\vert(P)$, $\facets(P)$ and~$\F(P)$, respectively.

The fractional perfect $b$-matching polytope of $G$ can now be written, using some of the notation above, as
\begin{equation}\label{PGb}\P(G,b):=\biggl\{x\in\R^E\biggm|x_e\ge0\mbox{ for
each }e\in E,\ \ \sum_{e\in\delta_G(v)}\!\!x_e=b_v\mbox{ for each }v\in V\biggr\}.\end{equation}

This set is a polytope in $\R^E$ since
it is a polyhedron in $\R^E$ (being the intersection of the closed halfspaces $\{x\in\R^E\mid x_e\ge0\}$ for each $e\in E$, and
the hyperplanes $\{x\in\R^E\mid\sum_{e\in\delta_G(v)}x_e=b_v\}$ for each $v\in V$),
and it is bounded (since any $x\in\P(G,b)$ satisfies $0\le x_e\le b_{v_e}$ for each $e\in E$,
where $v_e$ is an endpoint of $e$).

It will be assumed, for some of the results of this paper, that~$b$ is nonzero.  It can be seen that this is equivalent to the assumption
that $\P(G,b)\ne\{0\}$.

For the case $E=\emptyset$, $\P(G,b)$ is taken to be $\R^\emptyset=\{0\}$ if $b=0$, or to be $\emptyset$ if $b\ne 0$.

For $X\subset\P(G,b)$ and $x\in\P(G,b)$, the graphs of~$X$ and~$x$ (as already introduced briefly in
Section~\ref{summ}) will be denoted as
$\gr(X)$ and $\gr(x)$. See~\eqref{grX}--\eqref{grx} for definitions.
The set of graphs of $\P(G,b)$ will be denoted as $\G(G,b)$. See~\eqref{GGbdef} for a definition,
and~\eqref{GGb} and~\eqref{GGb2} and for several further characterizations.

\subsection{Related matching and polytope types}\label{relsect}
In order to place fractional perfect $b$-matching polytopes into a wider context, some related matchings and polytopes
will now be defined.  However, this information is not needed in the remainder of this paper.

For a graph $G$, and a vector $b\in\R^V$ with all entries nonnegative,
define eight types of vector $x\in\R^E$ with all entries nonnegative,
which satisfy
\begin{equation}\label{match}\sum_{e\in\delta_G(v)}\!\!x_e\le b_v\text{ \ for each }v\in V,\end{equation}
where the types are obtained by prefixing any of the terms `fractional', `perfect' or `$b$-' to the term `matching',
and these terms have the following meanings.  If `fractional' is omitted, then all entries of $b$ and $x$ are integers.
If `perfect' is included, then each case of~\eqref{match} holds as an equality.
If `$b$-' is omitted, then each entry of $b$ is 1.

For each of these types of matching, define an associated polytope as the set of all such matchings if `fractional'
is included, or as the convex hull of all such matchings if `fractional' is omitted.
Thus, for example, this definition of the fractional perfect $b$-matching polytope coincides with the previous definition~\eqref{PGb}.
For more information on the other cases of such matchings and
polytopes (and further related special cases, such as those which are `capacitated' or `simple'), see, for example, Korte and Vygen~\cite{KorVyg12},
Lov{\'a}sz and Plummer~\cite{LovPlu09}, or Schrijver~\cite{Sch03}).  Some of these cases
will also be considered in~\cite{Beh13b}.

Note that, using the previous definitions, a matching or perfect matching $x$ is an assignment $x_e$ of~$0$ or~$1$
to each edge $e$ of $G$ such that the sum of the numbers over all edges incident to any
vertex is at most~1, or exactly~1 in the perfect matching case.
However, using standard graph theory terminology, such an $x$ is actually the incidence vector of a matching
(i.e., a subset $M$ of $E$ such that each vertex is incident to at most one edge in~$M$),
or perfect matching (i.e., a subset $M$ of $E$ such that each vertex is
incident to exactly one edge in~$M$).
More specifically, $x_e$ is $1$ or $0$ according to whether or not the edge $e$ is in the matching.

\subsection{Further papers}
This is the first paper in a projected series of three papers on fractional perfect $b$-matching polytopes.

In the second paper~\cite{Beh13b},
various polytopes which are special cases of fractional perfect $b$-matching polytopes,
or which are affinely isomorphic to such special cases,
will be considered, and results (including certain standard theorems)
for these cases will be obtained by applying the general theorems
of this paper.  The cases which will be considered in~\cite{Beh13b} will include the following.
\begin{list}{$\bullet$}{\setlength{\labelwidth}{2mm}\setlength{\leftmargin}{5mm}}
\item Polytopes $\P(G,b)$ in which each entry of $b$ is an integer.
\item Polytopes defined by modifying~\eqref{PGb} so that $\sum_{e\in\delta_G(v)}x_e=b_v$
is replaced by $\sum_{e\in\delta_G(v)}x_e\le b_v$ for certain vertices $v$ of $G$.
\item Polytopes defined by modifying~\eqref{PGb} so that additional conditions $x_e\le c_e$
apply to certain edges $e$ of $G$,
where $c_e$ is a prescribed nonnegative number.
\item Polytopes of $b$-flows (or $b$-transshipments) on directed graphs.
See, for example,
Schrijver~\cite[Secs.~11.4 \&~13.2c]{Sch03}, or Korte and Vygen~\cite[Sec.~9.1]{KorVyg12}.
\item Certain other matching-type polytopes, including some of those discussed in Section~\ref{relsect}.
\item Polytopes of magic labelings of graphs.  See, for example, Ahmed~\cite{Ahm08}.
\item The symmetric transportation polytope $\N(R)$, and the related polytopes $\N(\le R)$,
$\N_{\le Z}(R)$ and $\N_{\le Z}(\le R)$.  See, for example, Brualdi~\cite[Sec.~8.2]{Bru06} for
definitions of the notation, and further information.
The cases $\N(R)$ and $\N_{\le Z}(R)$
will also be discussed briefly in Section~\ref{mat}.
\item The transportation polytope $\N(R,S)$, and the related polytopes
$\N(\le R,\le S)$, $\N_{\le Z}(R,$ $S)$ and $\N_{\le Z}(\le R,\le S)$.
See, for example, Brualdi~\cite[Secs.~8.1 \& 8.4]{Bru06} for
definitions of the notation, and further information.
The cases $\N(R,S)$ and $\N_{\le Z}(R,S)$ will also be discussed briefly in Section~\ref{mat}.
\item The polytope of doubly stochastic matrices, also known as the Birkhoff or assignment polytope,
and various related polytopes, including the polytopes of doubly substochastic matrices,
extensions of doubly substochastic matrices,
symmetric doubly stochastic matrices,
symmetric doubly substochastic matrices, and tridiagonal doubly stochastic matrices.
See, for example, Brualdi~\cite[Ch.~9]{Bru06}, and (for the tridiagonal case) Dahl~\cite{Dah04}.
\item The alternating sign matrix polytope. See, for example, Behrend and Knight~\cite[Sec.~6]{BehKni07}, or Striker~\cite{Str09}.
\end{list}

In the third paper~\cite{Beh13c}, the polytope of all elements of $\P(G,b)$ (for certain cases of $G$ and $b$)
which are invariant under a certain natural action of all elements of a
group of automorphisms of $G$ will be considered.

\section{Matrix forms of $\mathcal{P}(G,b)$}\label{mat}
In this section, some alternative forms involving matrices are given for~$\P(G,b)$.
The form~\eqref{IMF} will be used first in Section~\ref{facesPGb}, while the other forms,~\eqref{AMF} and~\eqref{BAMF}, will be used
in~\cite{Beh13b}. Certain matrix classes, and their relationship with certain cases of
fractional perfect $b$-matching polytopes, are also discussed.

The incidence matrix $I_G$ of a graph~$G$ is the matrix with rows and columns indexed by~$V$ and~$E$ respectively,
and entries $(I_G)_{ve}$ given by $1$ or $0$ according to whether or not
vertex~$v$ is incident with edge~$e$.  It follows immediately from this definition, and the definition~\eqref{PGb}
of the fractional perfect $b$-matching polytope of~$G$, that
\begin{equation}\label{IMF}\P(G,b)=\bigl\{x\in\R^E\bigm|x_e\ge0\mbox{ for each }e\in E,\ I_G\,x=b\bigr\}.\end{equation}

For $x\in\R^E$, define the generalized adjacency matrix $A_G(x)$ of $G$ to be the matrix with rows and columns indexed by~$V$, and
entries given by
\begin{equation}A_G(x)_{vw}=\sum_{e\in\delta_G(v)\cap\delta_G(w)}\!\!x_e\end{equation}
for each $v,w\in V$.
Note that the sum here is simply over all edges which connect $v$ and~$w$, that $A_G(x)$ is symmetric, and that
if $x_e=1$ for all $e\in E$ then $A_G(x)$ is the standard adjacency matrix of $G$.
It follows that
\begin{equation}\label{AMF}\P(G,b)=\biggl\{x\in\R^E\biggm|x_e\ge0\mbox{ for each }e\in E, \
\sum_{w\in V}A_G(x)_{vw}=b_v\mbox{ for each }v\in V\biggr\},\end{equation}
i.e., $\P(G,b)$ is the polytope of all assignments $x$ of nonnegative real numbers to the
edges of~$G$ such that the sum of entries in row/column $v$ of $A_G(x)$ is~$b_v$, for each vertex~$v$.

For the case in which $G$ is bipartite with bipartition $(U,W)$, and for $x\in\R^E$, define
the generalized $(U,W)$-biadjacency matrix $A^{(U,W)}_G\!(x)$ of $G$ to be
the submatrix of $A_G(x)$ obtained by restricting the rows to those indexed by~$U$ and
the columns to those indexed by~$W$.
If $x_e=1$ for all $e\in E$, then $A^{(U,W)}_G\!(x)$ is the standard $(U,W)$-biadjacency matrix of $G$.
It follows that
\begin{multline}\label{BAMF}\P(G,b)=\biggl\{x\in\R^E\biggm|
\displaystyle x_e\ge0\mbox{ for each }e\in E, \ \sum_{w\in V}A^{(U,W)}_G\!(x)_{uw}=b_u\mbox{ for each }u\in U,\\[-2mm]
\displaystyle\sum_{u\in U}A^{(U,W)}_G\!(x)_{uw}=b_w\mbox{ for each }w\in W\biggr\},\end{multline}
i.e., $\P(G,b)$ is the polytope of all assignments $x$ of nonnegative real numbers to the
edges of~$G$ such that the sum of entries in row~$u$ of $A^{(U,W)}_G\!(x)$ is~$b_u$
and the sum of entries in column~$w$ of $A^{(U,W)}_G\!(x)$ is~$b_w$, for each $u\in U$, $w\in W$.

The relationship between certain cases of fractional perfect $b$-matching polytopes,
and certain matrix classes will now be considered briefly.  Further details for these cases will be given in~\cite{Beh13b}.

It follows from~\eqref{AMF} that if $G$ does not contain multiple edges, then $\P(G,b)$ is affinely isomorphic (using
simple and obvious mappings) to the
polytope of all $|V|\times|V|$ symmetric matrices with nonnegative real entries, for which
certain entries are prescribed to be zero, and the sum of entries in any row/column is a prescribed nonnegative real number for that row/column.
An account of such polytopes is given by Brualdi in~\cite[Sec.~8.2]{Bru06}.  The notation used there is that,
given a vector $R\in\R^n$ with nonnegative entries, and a symmetric $n\times n$ matrix $Z$ each of whose entries is $0$ or~$1$,
$\N_{\le Z}(R)$ is the polytope, or matrix class, of all $n\times n$ symmetric matrices with nonnegative real entries,
for which the $i,j$ entry is zero if~$Z_{ij}$ is zero, and the sum of entries in row/column $i$ is~$R_i$.
Accordingly, if~$G$ does not contain multiple edges, then $\P(G,b)$ and $\N_{\le Z}(R)$ are affinely isomorphic, where~$Z$ is the adjacency matrix
of $G$, and associated entries of $b$ and~$R$ are equal.
For the case in which $G$ is a complete graph with loops (i.e., a graph in which any two distinct vertices are connected by a single edge,
and a single loop is incident
to each vertex),~$Z$ is a square matrix each of whose entries is 1, and $\N_{\le Z}(R)$ is
a so-called symmetric transportation polytope, denoted in~\cite[pp.~39~\&~348]{Bru06} as $\N(R)$.
It follows (using the fact that setting a subset of a polyhedron's
defining inequalities to equalities gives a, possibly empty, face of the polyhedron) that $\N_{\le Z}(R)$ is a face of $\N(R)$.
Similarly, for an arbitrary graph $G$ without multiple edges, $\P(G,b)$ is affinely isomorphic to a face of $\P(K_{V},b)$, where $K_{V}$ is
a complete graph with loops, and vertex set $V$.

It follows from~\eqref{BAMF} that if $G$ is bipartite with bipartition $(U,W)$,
and does not contain multiple edges, then $\P(G,b)$ is affinely isomorphic (again, using
simple and obvious mappings) to the
polytope of all $|U|\times|W|$ matrices with nonnegative real entries, for which
certain entries are prescribed to be zero, and the sum of entries in any row or
column is a prescribed nonnegative real number for that row or column.
An account of such polytopes is given by Brualdi in~\cite[Secs.~8.1 \&~8.4]{Bru06}.
The notation used there is that,
given vectors $R\in\R^m$ and $S\in\R^n$ with nonnegative entries, and an $m\times n$ matrix $Z$ each of whose entries is $0$ or~$1$,
$\N_{\le Z}(R,S)$ is the polytope, or matrix class, of all $m\times n$ matrices with nonnegative real entries,
for which the $i,j$ entry is zero if~$Z_{ij}$ is zero, the sum of entries in row $i$ is $R_i$, and the sum of entries in column $j$ is $S_j$.
Accordingly, if~$G$ is bipartite with bipartition $(U,W)$, and does not contain multiple edges, then
$\P(G,b)$ and $\N_{\le Z}(R,S)$ are affinely isomorphic, where~$Z$ is the $(U,W)$-biadjacency matrix
of $G$, the entries~$b_v$ with $v\in U$ are equal to associated entries of $R$,
and the entries~$b_v$ with $v\in W$ are equal to associated entries of $S$.
For the case in which $G$ is a complete bipartite graph (i.e., a graph in which each vertex of $U$ and each vertex of $W$ are connected by a
single edge),~$Z$ is a matrix each of whose entries is 1, and $\N_{\le Z}(R,S)$ is a so-called transportation polytope,
denoted in~\cite[pp.~26~\&~337]{Bru06} as $\N(R,S)$.
It follows that $\N_{\le Z}(R,S)$ is a face of $\N(R,S)$.
Similarly, for an arbitrary bipartite graph $G$ without multiple edges, and with bipartition $(U,W)$,
$\P(G,b)$ is affinely isomorphic to a face of $\P(K_{(U,W)},b)$, where $K_{(U,W)}$ is
a complete bipartite graph with bipartition $(U,W)$.  For further
information regarding transportation polytopes, see, for example,
Kim~\cite{Kim10}, Klee and Witzgall~\cite{KleWit68}, Schrijver~\cite[Sec.~21.6]{Sch03},
or Yemelichev, Kovalev and Kravtsov~\cite[Ch.~6]{YemKovKra84}.

\section{Conditions for nonemptiness and strictly positive elements of~$\mathcal{P}(G,b)$}\label{nonempt}
In this section, results which provide necessary and sufficient conditions for $\P(G,b)$ to be nonempty, or to contain a strictly positive element,
are obtained.
The conditions for~$\P(G,b)$ to be nonempty take the form of
finitely-many weak inequalities and equalities for certain sums of entries of $b$,
and the conditions for $\P(G,b)$ to contain a strictly positive element take the form of
finitely-many strict inequalities and equalities for certain sums of entries of $b$.
The conditions for~$\P(G,b)$ to contain a strictly positive element will be used in Section~\ref{furthcond}.

It will be simplest, in this section, to obtain results first for the case of bipartite~$G$ (in Theorems~\ref{bipnonemptth} and~\ref{bipposth}),
and then to use these to obtain
results for the case of arbitrary~$G$ (in Theorems~\ref{nonemptth} and~\ref{posth}).  By contrast, in later sections of the paper,
results will be obtained first for arbitrary $G$, with results for bipartite $G$ then following as corollaries.

The notation $\cupdot$ and $G[U,W]$ (for subsets $U$ and $W$ of $V$), as introduced in Section~\ref{convnot}, will be used in
this section.  Also, several of the results will be expressed in terms of vertex covers of $G$.
However, such results could easily be restated in terms of stable (or independent) sets of~$G$
(i.e., subsets of~$V$ which do not contain any adjacent vertices),
since $S\subset V$ is a stable set of $G$ if and only if $V\setminus S$ is a vertex cover of $G$.

In the first proposition of this section, it is seen that there would
be no loss of generality in the main theorems of this section
if $G$ were assumed to contain only single edges.  The terminology used in this proposition is
that a graph $G'$ is related to $G$ by reduction of multiple edges to single edges
if~$G'$ has vertex set $V$, $G'$ does not contain multiple edges, and, for any $u,w\in V$,~$u$
and~$w$ are adjacent in $G'$ if and only if $u$ and $w$ are adjacent in~$G$.

\begin{proposition}\label{multempt}
Let $G'$ be a graph related to $G$ by reduction of multiple edges to single edges.  Then
$\P(G,b)$ is nonempty if and only if $\P(G',b)$ is nonempty, and
$\P(G,b)$ contains a strictly positive element if and only if $\P(G',b)$ contains a strictly positive element.
\end{proposition}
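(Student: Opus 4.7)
The plan is to set up an explicit two-way correspondence between elements of $\P(G,b)$ and $\P(G',b)$ via the natural projection of edges. Since $G'$ is related to $G$ by reduction of multiple edges to single edges, the vertex set of $G'$ is $V$, and for every edge $e\in E$ there is a unique edge $\pi(e)$ of $G'$ with the same endpoints; this defines a surjection $\pi:E\to E'$, and for each $e'\in E'$ the fibre $\pi^{-1}(e')$ is nonempty. Under the conventions of Section~\ref{convnot}, this surjection also restricts, for each $v\in V$, to a partition of $\delta_G(v)$ into the sets $\pi^{-1}(e')$ for $e'\in\delta_{G'}(v)$ (a single loop at $v$ in $G'$ collapses all the loops at $v$ in $G$, and loops contribute once to each incidence set).

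First, I would show the forward implications by defining $\phi:\R^E\to\R^{E'}$ via
\begin{equation*}\phi(x)_{e'}:=\sum_{e\in\pi^{-1}(e')}x_e,\end{equation*}
which is linear and sends nonnegative $x$ to nonnegative $\phi(x)$. For any $x\in\P(G,b)$ and any $v\in V$, the partition of $\delta_G(v)$ noted above yields
\begin{equation*}\sum_{e'\in\delta_{G'}(v)}\phi(x)_{e'}=\sum_{e'\in\delta_{G'}(v)}\sum_{e\in\pi^{-1}(e')}x_e=\sum_{e\in\delta_G(v)}x_e=b_v,\end{equation*}
so $\phi(x)\in\P(G',b)$. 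Moreover, if $x$ is strictly positive then $\phi(x)_{e'}$ is a sum of strictly positive terms over the nonempty fibre $\pi^{-1}(e')$, so $\phi(x)$ is strictly positive.

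Next, for the reverse implications I would define $\psi:\R^{E'}\to\R^E$ by the uniform-splitting formula
\begin{equation*}\psi(x')_e:=\frac{x'_{\pi(e)}}{|\pi^{-1}(\pi(e))|},\end{equation*}
which preserves nonnegativity and preserves strict positivity (since each fibre is finite and nonempty). For any $x'\in\P(G',b)$ and any $v\in V$, the same partition gives
\begin{equation*}\sum_{e\in\delta_G(v)}\psi(x')_e=\sum_{e'\in\delta_{G'}(v)}\sum_{e\in\pi^{-1}(e')}\frac{x'_{e'}}{|\pi^{-1}(e')|}=\sum_{e'\in\delta_{G'}(v)}x'_{e'}=b_v,\end{equation*}
so $\psi(x')\in\P(G,b)$. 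Combining $\phi$ and $\psi$ then yields both claimed equivalences.

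I do not expect any genuine obstacle; the only subtle point is to keep track of the convention that a loop at $v$ contributes exactly once to $\delta_G(v)$ (and likewise in $G'$), so that the fibres $\pi^{-1}(e')$ really do partition $\delta_G(v)$ when $e'\in\delta_{G'}(v)$, including in the loop case. Once that is observed, the proof reduces to the two short computations above.
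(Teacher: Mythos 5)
Your proposal is correct and follows essentially the same route as the paper: the paper's proof also pushes forward by summing $x$ over the parallel class $G[\{u\},\{w\}]$ (your fibre $\pi^{-1}(e')$) and pulls back by uniform splitting $x_e=x'_{e'}/|G[\{u\},\{w\}]|$. Your explicit verification of the vertex sums, including the loop convention, just spells out what the paper leaves as a routine check.
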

This elementary result will now be proved directly.  However, its validity will also follow from later theorems which give
conditions for $\P(G,b)$ to be nonempty, or to contain a strictly positive element, since it will be apparent that
these conditions depend only on whether or not certain pairs of vertices of $G$ are adjacent, rather the actual number of edges
which connect such vertices.
\begin{proof}Denote the edge set of $G'$ as $E'$.
First, let $\P(G,b)$ be nonempty and choose an $x$ in $\P(G,b)$.
Define $x'\in\R^{E'}$ by $x'_{e'}=\sum_{e\in G[\{u\},\{w\}]}x_e$ for each $e'\in E'$, where $u$ and $w$ are the endpoints of $e'$.
Then $x'\in\P(G',b)$, and $x'$ is strictly positive if~$x$ is strictly positive.

Conversely, let $\P(G',b)$ be nonempty and choose an $x'$ in $\P(G',b)$.  Define
$x\in\R^{E}$ by $x_e=x'_{e'}/|G[\{u\},\{w\}]|$ for each $e\in E$, where
$u$ and $w$ are the endpoints of $e$, and $e'$ is the single edge of $E'$
which connects $u$ and $w$.
Then $x\in\P(G,b)$, and $x$ is strictly positive if~$x'$ is strictly positive.
\end{proof}

The following elementary result provides necessary conditions for~$\P(G,b)$ to be non\-empty,
or to contain a strictly positive element.
\begin{lemma}\label{nonempnec}\mbox{}\\[-5mm]
\begin{list}{(\roman{listnumber})}{\usecounter{listnumber}\setlength{\labelwidth}{6mm}\setlength{\leftmargin}{9mm}\setlength{\itemsep}{1mm}}
\item A necessary condition for $\P(G,b)$ to be nonempty is that
$\sum_{v\in C}b_v\ge\sum_{v\in V\setminus C}b_v$ for each vertex cover~$C$ of~$G$.
\item A necessary condition for $\P(G,b)$ to contain a strictly positive element
is that the condition of (i) is satisfied, with its inequality holding as an equality if and only if $V\setminus C$ is also a vertex cover of~$G$.
\end{list}
\end{lemma}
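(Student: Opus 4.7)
The plan is to take an arbitrary $x\in\P(G,b)$ and expand the defining equalities $\sum_{e\in\delta_G(v)}x_e=b_v$ for the vertices in $C$ and in $V\setminus C$, then swap the order of summation so that each edge is counted by how many of its endpoints lie in~$C$ (respectively $V\setminus C$). The whole proof is really a double-counting argument, and the only nontrivial bookkeeping is the convention that a loop at $v$ is counted only once in $\delta_G(v)$, which I would state explicitly as the reason it contributes with multiplicity~$1$ rather than~$2$.

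For (i), I would classify the edges according to how their endpoints sit relative to $C$: a non-loop edge $e$ with endpoints $u\ne w$ contributes $|\{u,w\}\cap C|\cdot x_e$ to $\sum_{v\in C}b_v$ and $|\{u,w\}\cap (V\setminus C)|\cdot x_e$ to $\sum_{v\in V\setminus C}b_v$, while a loop at $v$ contributes $x_e$ to whichever of the two sums corresponds to the side containing~$v$, and $0$ to the other. Since $C$ is a vertex cover, no edge has both endpoints in $V\setminus C$, so subtracting gives
\begin{equation*}
\sum_{v\in C}b_v\;-\sum_{v\in V\setminus C}b_v\;=\;2\!\!\!\!\sum_{\substack{e\in G[C,C]\\e\text{ not a loop}}}\!\!\!\!x_e\;+\sum_{\substack{e\text{ a loop at some}\\v\in C}}\!\!x_e,
\end{equation*}
which is a sum of nonnegative terms; this is (i).

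For (ii), assume in addition that $x$ is strictly positive. Then the right-hand side above vanishes if and only if there is neither a non-loop edge with both endpoints in $C$ nor a loop at any vertex of $C$, i.e.\ if and only if $G[C,C]=\emptyset$. By the characterization of vertex covers recorded in Section~\ref{convnot}, this is precisely the condition that $V\setminus C$ is a vertex cover of $G$, which gives (ii). The only point that requires any care is the handling of loops; once the convention on $\delta_G(v)$ is invoked, both parts of the lemma fall out immediately from the single identity above.
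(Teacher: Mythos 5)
Your proof is correct and follows essentially the same route as the paper's: both expand $\sum_{v\in C}b_v$ and $\sum_{v\in V\setminus C}b_v$ by double counting edges with the loop/non-loop multiplicities $1$ and $2$, use $G[V\setminus C,V\setminus C]=\emptyset$ for a vertex cover $C$ to reduce the difference to a sum of nonnegative terms supported on $G[C,C]$, and then for (ii) observe that under strict positivity this sum vanishes precisely when $G[C,C]=\emptyset$, i.e.\ when $V\setminus C$ is also a vertex cover. The only cosmetic difference is that the paper states the identity for arbitrary $C\subset V$ before specializing to vertex covers, whereas you write it directly for a vertex cover.
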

Note that, by adding $\sum_{v\in C}b_v$ to each side,
the inequality in (i) of this lemma is equivalent to
$\sum_{v\in C}b_v\ge\frac{1}{2}\sum_{v\in V}b_v$.

Note also that, for a subset $C$ of $V$, $C$ and $V\setminus C$ are both vertex covers of $G$ if and only if $(C,V\setminus C)$ is a bipartition for $G$.
Hence, if $G$ is not bipartite, then a necessary condition for $\P(G,b)$ to contain a strictly positive element
is that
$\sum_{v\in C}b_v>\sum_{v\in V\setminus C}b_v$ for each vertex cover $C$ of~$G$.
\begin{proof}
Assume that $\P(G,b)$ is nonempty, and choose an $x\in\P(G,b)$.
Then $\sum_{e\in\delta_G(v)}x_e=b_v$ for each $v\in V$, which gives
$\sum_{v\in C}b_v=\sum_{e\in G[C,C]}\mu_e\,x_e+\sum_{e\in G[C,V\setminus C]}x_e$, for any $C\subset V$,
where $\mu_e=2$ if $e$ is not a loop and $\mu_e=1$ if $e$ is a loop.  Therefore,
\begin{equation}\label{Ceq}\textstyle\sum_{v\in C}b_v-\sum_{v\in V\setminus C}b_v=\sum_{e\in G[C,C]}\mu_e\,x_e-
\sum_{e\in G[V\setminus C,V\setminus C]}\mu_e\,x_e,\end{equation}
for any $C\subset V$.

Part (i) of the lemma
now follows from~\eqref{Ceq}, and the facts that $\mu_e\,x_e\ge0$ for each $e\in E$,
 and $G[V\setminus C,V\setminus C]=\emptyset$
if~$C$ is a vertex cover. Part (ii) of the lemma follows from~\eqref{Ceq}
by assuming that $\P(G,b)$ contains a strictly positive element, choosing~$x$ to be such an element,
and using the facts that $G[C,C]\ne\emptyset=G[V\setminus C,V\setminus C]$ if~$C$ is
a vertex cover and $V\setminus C$ is not a vertex cover, while
$G[C,C]=G[V\setminus C,V\setminus C]=\emptyset$ if~$C$ and $V\setminus C$ are both vertex covers.
\end{proof}

The next two results, Theorems~\ref{bipnonemptth} and~\ref{bipposth}, state that if $G$ is bipartite, then the conditions of Lemma~\ref{nonempnec} are
also sufficient to ensure that $\P(G,b)$ is nonempty, or (for $E\ne\emptyset$) that~$\P(G,b)$ contains a strictly positive element.
\begin{theorem}\label{bipnonemptth}
Let $G$ be bipartite.
Then a necessary and sufficient condition for $\P(G,b)$ to be nonempty is that
$\sum_{v\in C}b_v\ge\sum_{v\in V\setminus C}b_v$ for each vertex cover $C$ of~$G$.
\end{theorem}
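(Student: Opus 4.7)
Necessity is Lemma~\ref{nonempnec}(i), so I focus on sufficiency. Assume the vertex-cover inequalities $\sum_{v \in C} b_v \ge \sum_{v \in V \setminus C} b_v$ hold for every vertex cover $C$ of $G$; the task is to exhibit an $x \in \P(G,b)$. By Proposition~\ref{multempt}, it is harmless to assume that $G$ has no multiple edges, as the hypothesis depends only on which pairs of vertices are adjacent. Fix a bipartition $(U,W)$ of $G$; since $U$ and $W$ are themselves vertex covers, applying the hypothesis to each gives
\[\sum_{u \in U} b_u \;=\; \sum_{w \in W} b_w \;=\; \tfrac{1}{2} \sum_{v \in V} b_v.\]

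The plan is to recast $\P(G,b)$ as the feasibility set of a max-flow problem. Construct a directed network on $\{s,t\} \cup V$ with a source-arc $s \to u$ of capacity $b_u$ for each $u \in U$, a sink-arc $w \to t$ of capacity $b_w$ for each $w \in W$, and an arc $u \to w$ of infinite capacity for each $uw \in E$. An $s$-$t$ flow of value $\sum_{u \in U} b_u$ must saturate every source- and sink-arc, and its values on the middle arcs are the entries of an element of $\P(G,b)$; conversely every element of $\P(G,b)$ induces such a flow. Thus sufficiency reduces to the existence of an $s$-$t$ flow of value $\sum_{u \in U} b_u$ in this network.

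By the max-flow/min-cut theorem, this in turn reduces to bounding every $s$-$t$ cut by $\sum_{u \in U} b_u$ from below. A cut described by sets $A \subseteq U$ and $B \subseteq W$ with source side $\{s\} \cup A \cup B$ has infinite capacity unless $G[A, W \setminus B] = \emptyset$; since $G$ is bipartite, this last condition is exactly the condition that $C := (U \setminus A) \cup B$ is a vertex cover of $G$, and then the capacity of the cut equals $\sum_{v \in C} b_v$. The vertex-cover hypothesis therefore forces every finite cut to have capacity at least $\tfrac{1}{2}\sum_{v \in V} b_v = \sum_{u \in U} b_u$, a value attained by the trivial cut $\{s\}$. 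Hence the minimum cut equals $\sum_{u \in U} b_u$, and max-flow/min-cut delivers a flow of that value, giving the desired $x$.

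The main obstacle is the appeal to max-flow/min-cut, which is not developed within this paper. A fully self-contained alternative would be a direct Farkas-lemma argument applied to the system~\eqref{IMF}, reducing the claim to showing that every $y \in \R^V$ with $y_u + y_w \ge 0$ on each edge satisfies $\sum_v b_v y_v \ge 0$; a further option is an induction on $|E|$ that either removes the edges at a zero-$b_v$ vertex (the hypothesis descends since any cover of the reduced graph yields a cover of the original by adjoining $v$, with the same $b$-sum) or else adjusts some $x_{uw}$ and the corresponding $b_u, b_w$ by the slack in a tight cover constraint. The technical delicacy common to these alternatives is verifying that the vertex-cover hypothesis is preserved under the relevant reduction.
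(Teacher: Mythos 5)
Your proof is correct, and the cut analysis is carried out accurately: finite $s$--$t$ cuts in your network correspond exactly to vertex covers $C=(U\setminus A)\cup B$, the capacity of such a cut is $\sum_{v\in C}b_v$, and the hypothesis (in the equivalent form $\sum_{v\in C}b_v\ge\tfrac{1}{2}\sum_{v\in V}b_v$, with $\sum_{v\in U}b_v=\sum_{v\in W}b_v$ extracted from the covers $C=U$ and $C=W$) bounds every finite cut below by the value of the trivial cut. However, the route differs from the paper's written proof in exactly the respect you flag at the end: you invoke the max-flow/min-cut theorem as an external black box, whereas the paper deliberately gives a self-contained argument. (The paper explicitly acknowledges your reduction as one standard derivation, via Schrijver's Corollary~11.2h applied to the graph with each edge directed from $U$ to $W$.) The paper's own proof instead internalizes the max-flow/min-cut machinery: assuming $\P(G,b)=\emptyset$, it minimizes the total deficiency $f(x)=\sum_{v\in V}\bigl|\sum_{e\in\delta_G(v)}x_e-b_v\bigr|$ over the nonnegative orthant, defines the set $S'$ of vertices reachable from the deficient set $S$ along paths whose $W$-to-$U$ steps carry positive $x_e$ (the augmenting-path device), shows $S'$ avoids the excess set $T$ at a minimizer, and reads off from $S'$ a partition violating the cover inequality --- i.e., $S'$ is precisely the source side of your minimum cut. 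What your approach buys is brevity and a transparent dictionary between vertex covers and cuts; what the paper's approach buys is self-containedness, which matters here because the author states an explicit aim of giving uniform, self-contained proofs. As a standalone argument yours is complete only modulo citing max-flow/min-cut (or carrying out one of the alternatives you sketch), but there is no gap in the reasoning itself.
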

It can be seen that, if $(U,W)$ is a bipartition for $G$, then
the condition of the theorem is equivalent to the alternative condition that
$\sum_{v\in U_1}b_v+\sum_{v\in W_1}b_v\ge\sum_{v\in U_2}b_v+\sum_{v\in W_2}b_v$ for all sets~$U_1$,~$U_2$,~$W_1$ and~$W_2$
such that $U=U_1\cupdot U_2$, $W=W_1\cupdot W_2$ and $G[U_2,W_2]=\emptyset$.
(In particular, for $C$ satisfying the condition of the theorem, set $U_1=U\cap C$, $U_2=U\setminus C$, $W_1=W\cap C$ and $W_2=W\setminus C$,
and conversely, for $U_1$, $U_2$, $W_1$ and~$W_2$ satisfying the alternative condition, set $C=U_1\cup W_1$.)
It can also be checked that the alternative condition
remains unchanged if its single inequality is replaced by
$\sum_{v\in U_1}b_v\ge\sum_{v\in W_2}b_v$ and $\sum_{v\in W_1}b_v\ge\sum_{v\in U_2}b_v$,
by $\sum_{v\in U}b_v=\sum_{v\in W}b_v$ and $\sum_{v\in U_1}b_v\ge\sum_{v\in W_2}b_v$,
or by $\sum_{v\in U}b_v=\sum_{v\in W}b_v$ and $\sum_{v\in W_1}b_v\ge\sum_{v\in U_2}b_v$.
(For example, the condition $\sum_{v\in U}b_v=\sum_{v\in W}b_v$ follows from the condition
of the theorem by using the vertex covers $C=U$ and $C=W$.)

Note that, in the condition of this theorem,
$C$ can be restricted to being different from $V$ (since taking $C$ to be $V$ gives
a vertex cover of $G$ which automatically satisfies $\sum_{v\in C}b_v\ge\sum_{v\in V\setminus C}b_v$).

It can also be seen that, if the condition of this theorem is satisfied, and if $C$ and $V\setminus C$ are both vertex covers
of $G$, then $\sum_{v\in C}b_v=\sum_{v\in V\setminus C}b_v$ (i.e., the inequality then holds as an equality).

This theorem is a standard result.  See, for example, Schrijver~\cite[Thm.~21.11]{Sch03}.
It can be proved using linear programming duality (as done in the proof given by Schrijver~\cite[Thm.~21.11]{Sch03}),
or using standard theorems from network flow theory. (For example, it follows from Schrijver~\cite[Cor.~11.2h]{Sch03} by
using a directed graph which is formed from $G$ by directing each edge from $U$ to $W$, where
$(U,W)$ is a bipartition for $G$).
For completeness, a proof will also be given here.  This is a direct and self-contained proof, which uses an approach based on that
used by Schrijver~\cite[Thms.~10.3 and~11.2]{Sch03} for proofs of
the max-flow min-cut theorem and Hoffman's circulation theorem.
\begin{proof}The necessity of the condition is given by (i) of Lemma~\ref{nonempnec}.

The sufficiency of the condition will be obtained
by using a bipartition $(U,W)$ for~$G$, and showing that if $\P(G,b)$ is empty and $\sum_{v\in U}b_v=\sum_{v\in W}b_v$, then
there exist~$U_1$,~$U_2$,~$W_1$ and~$W_2$ such that $U=U_1\cupdot U_2$, $W=W_1\cupdot W_2$, $G[U_2,W_2]=\emptyset$
and $\sum_{v\in W_1}b_v<\sum_{v\in U_2}b_v$.

So, let $\P(G,b)=\emptyset$ and $\sum_{v\in U}b_v=\sum_{v\in W}b_v$.
Define $\R^E_+=\{x\in\R^E\mid x_e\ge0$ for each $e\in E\}$ and,
for any $x\in\R^E_+$, let $f(x)=\sum_{v\in V}|\sum_{e\in\delta_G(v)}x_e-b_v|$.
(Note that $f(x)>0$ for all $x\in\R^E_+$, since $\P(G,b)=\emptyset$.)
Now choose an~$x$ which minimizes~$f$ over~$\R^E_+$.  The forms of
$\R^E_+$ and~$f$ guarantee the existence of such an $x$.  (In particular,
the polyhedron~$\R^E_+$ can be subdivided into finitely-many nonempty polyhedra, on each of which $f$ is a
positive affine function.  Specifically, each such polyhedron $P$ has the form
$\{x\in\R^E_+\mid\sigma(P)_v\,\bigl(\sum_{e\in\delta_G(v)}x_e-b_v\bigr)\ge0\mbox{ for each }v\in V\}$,
for some assignment $\sigma(P)_v$ of~$-1$ or $1$ to each $v\in V$,
so that $f(x)=\sum_{v\in V}\sigma(P)_v\,\bigl(\sum_{e\in\delta_G(v)}x_e-b_v\bigr)$ for all $x\in P$.
The standard fact, as given for example in
Korte and Vygen~\cite[Prop.~3.1]{KorVyg12}, that a real affine function which is bounded below on a nonempty polyhedron
attains a minimum over the polyhedron then implies that $f$ attains a minimum over~$\R^E_+$.)

Define $S=\{u\in U\mid\sum_{e\in\delta_G(u)}x_e<b_u\}\cup\{w\in W\mid\sum_{e\in\delta_G(w)}x_e>b_w\}$
and $T=\{u\in U\mid\sum_{e\in\delta_G(u)}x_e>b_u\}\cup\{w\in W\mid\sum_{e\in\delta_G(w)}x_e<b_w\}$.
Since $\P(G,b)=\emptyset$, $S\cup T$ is nonempty. It then follows, using $\sum_{v\in U}b_v=\sum_{v\in W}b_v$, that~$S$
and~$T$ are each nonempty (since $S\ne\emptyset$ and $T=\emptyset$ would give $\sum_{v\in U}b_v>\sum_{v\in W}b_v$, while
$S=\emptyset$ and $T\ne\emptyset$ would give $\sum_{v\in U}b_v<\sum_{v\in W}b_v$).
Now define
\begin{multline}\label{SS}S'=\{v\in V\mid\text{there exists $s\in S$ and a path $P$ in $G$ from~$s$ to~$v$ satisfying}\\
x_e>0\text{ for each edge $e$ corresponding to a step of $P$ from $W$ to }U\},\end{multline}
i.e., $S'$ is the set of vertices of $G$ which are reachable from $S$ by a path $P$
with the property that $x_e$ is positive for each edge $e$ which corresponds to a step of $P$ from $W$ to $U$.
It follows immediately that $S\subset S'$, $G[U\cap S',W\setminus S']=\emptyset$, and
$x_e=0$ for each $e\in G[W\cap S',U\setminus S']$.
Also, $S'\cap T=\emptyset$, where this can be deduced as follows.  If $S'\cap T$ were nonempty,
then there would exist $s\in S$, $t\in T$ and a path~$P$ from~$s$ to~$t$ satisfying the property of~\eqref{SS}.
Taking $y\in\R^E$ as $y_e=\epsilon$ for each edge $e$ corresponding to a step of~$P$ from~$U$ to~$W$,
$y_e=-\epsilon$ for each edge $e$ corresponding to a step of $P$ from~$W$ to $U$, and $y_e=0$ for each edge $e$ not in $P$, it would follow
that, for sufficiently small $\epsilon>0$, $x+y\in\R^E_+$ and $f(x+y)<f(x)$, but this is impossible
since $x$ minimizes $f$ over~$\R^E_+$.

Now define $U_1=U\setminus S'$, $U_2=U\cap S'$, $W_1=W\cap S'$ and $W_2=W\setminus S'$.
Then $U=U_1\cupdot U_2$, $W=W_1\cupdot W_2$, $G[U_2,W_2]=\emptyset$,
and $x_e=0$ for each $e\in G[U_1,W_1]$. Also, $\sum_{e\in\delta_G(u)}x_e\le b_u$ for each $u\in U_2$,
and $\sum_{e\in\delta_G(w)}x_e\ge b_w$ for each $w\in W_1$
(since $S\subset S'=U_2\cup W_1$ and $T\subset V\setminus S'$),
with strict inequality holding for at least one $u\in U_2$ or $w\in W_1$ (since $S\ne\emptyset$).
Therefore $\sum_{w\in W_1}b_w\le\sum_{e\in G[U,W_1]}x_e=\sum_{e\in G[U_2,W_1]}x_e=
\sum_{e\in G[U_2,W]}x_e\le\sum_{u\in U_2}b_u$, with
at least one of the inequalities holding strictly,
so that $\sum_{v\in W_1}b_v<\sum_{v\in U_2}b_v$, as required.\end{proof}

\begin{theorem}\label{bipposth}
Let $G$ be bipartite, with $E$ nonempty.
Then a necessary and sufficient condition for $\P(G,b)$ to contain a strictly positive element is that
$\sum_{v\in C}b_v\ge\sum_{v\in V\setminus C}b_v$ for each vertex cover~$C$ of $G$
(i.e., the condition of Theorem~\ref{bipnonemptth} is satisfied),
with the inequality holding as an equality if and only if $V\setminus C$ is also a vertex cover of~$G$.
\end{theorem}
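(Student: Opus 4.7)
The necessity of the condition is given by Lemma~\ref{nonempnec}(ii).  For sufficiency, the plan is to reduce to Theorem~\ref{bipnonemptth}: find a strictly positive $\epsilon\in\R^E$ such that the vector $b'\in\R^V$ defined by $b'_v:=b_v-\sum_{e\in\delta_G(v)}\epsilon_e$ is nonnegative and satisfies the (non-strict) condition of Theorem~\ref{bipnonemptth}.  Then $\P(G,b')$ is nonempty, and for any $x'\in\P(G,b')$ the vector $x:=x'+\epsilon$ is a strictly positive element of $\P(G,b)$.

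Fix a bipartition $(U,W)$ of $G$.  Applying the hypothesis to the vertex covers $C=U$ and $C=W$, whose complements are themselves vertex covers, gives $\sum_{v\in U}b_v=\sum_{v\in W}b_v$.  Next, for any vertex cover $C$ of $G$, since $G$ is loopless and $G[V\setminus C,V\setminus C]=\emptyset$, a short direct calculation (in which each edge $e$ contributes $2\epsilon_e$ if $e\in G[C,C]$ and $0$ otherwise) gives
\begin{equation*}
\textstyle\sum_{v\in C}b'_v-\sum_{v\in V\setminus C}b'_v\;=\;\bigl(\sum_{v\in C}b_v-\sum_{v\in V\setminus C}b_v\bigr)-2\sum_{e\in G[C,C]}\epsilon_e.
\end{equation*}
When $V\setminus C$ is also a vertex cover, $C$ is a stable set so $G[C,C]=\emptyset$, and both sides vanish by the equality clause of the hypothesis; when $V\setminus C$ is not a vertex cover, the strict inequality in the hypothesis leaves room to keep the right side nonnegative provided the $\epsilon_e$ are chosen small enough.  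Since there are only finitely-many vertex covers and finitely-many vertices, these constraints, together with $b'_v\ge0$, can be simultaneously satisfied by a strictly positive $\epsilon$ provided $b_v>0$ holds at every non-isolated vertex $v$ (at isolated $v$ both sides of $b'_v\ge0$ are $0$).

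The main step is therefore to verify that $b_v>0$ whenever $v$ has an incident edge.  Suppose $v\in U$ has a neighbor, and set $C:=W\cup\{v\}$.  Then $C$ is a vertex cover of $G$, while $V\setminus C=U\setminus\{v\}$ fails to be a vertex cover precisely because the edge incident to $v$ is not covered by $U\setminus\{v\}$.  Hence the hypothesis gives the strict inequality $\sum_{w\in W}b_w+b_v>\sum_{u\in U}b_u-b_v$, which combined with $\sum_{v\in U}b_v=\sum_{v\in W}b_v$ collapses to $2b_v>0$, so $b_v>0$.  The symmetric argument, with the roles of $U$ and $W$ interchanged, applies when $v\in W$.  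With this in hand, the perturbation $\epsilon$ can be chosen and the reduction to Theorem~\ref{bipnonemptth} completes the proof.
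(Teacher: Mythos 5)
Your proposal is correct and follows essentially the same route as the paper: necessity via Lemma~\ref{nonempnec}(ii), positivity of $b_v$ at non-isolated vertices via the vertex covers $U$ and $W\cup\{v\}$, and then a small strictly positive edge perturbation reducing sufficiency to Theorem~\ref{bipnonemptth}. The only (cosmetic) difference is that the paper takes the perturbation to be a uniform scalar $\epsilon$ times the all-ones edge vector and obtains the needed inequalities for the degree vector $d$ by applying Lemma~\ref{nonempnec}(ii) to $\P(G,d)$, whereas you allow an edge-indexed $\epsilon$ and compute the correction term $-2\sum_{e\in G[C,C]}\epsilon_e$ directly.
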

Note that if $(U,W)$ is a bipartition for $G$, then
the condition of the theorem is equivalent to the condition that
$\sum_{v\in U_1}b_v+\sum_{v\in W_1}b_v\ge\sum_{v\in U_2}b_v+\sum_{v\in W_2}b_v$ for all sets~$U_1$,~$U_2$,~$W_1$ and~$W_2$
such that $U=U_1\cupdot U_2$, $W=W_1\cupdot W_2$ and $G[U_2,W_2]=\emptyset$, with the inequality holding
as an equality if and only if $G[U_1,W_1]=\emptyset$.
Furthermore, this condition remains unchanged if its inequality is replaced by
$\sum_{v\in U_1}b_v\ge\sum_{v\in W_2}b_v$, or by $\sum_{v\in W_1}b_v\ge\sum_{v\in U_2}b_v$
(since if the condition, in any of these forms, is satisfied, then taking
$U_1=U$, $W_2=W$ and $U_2=W_1=\emptyset$, or $U_2=U$, $W_1=W$ and $U_1=W_2=\emptyset$,
gives $\sum_{v\in U}b_v=\sum_{v\in W}b_v$).

This theorem, stated in terms of matrices, is due to Brualdi. See~\cite[Thm.~2.1]{Bru68},
\cite[Thm.~2.7]{Bru76} and~\cite[Thm.~8.1.7]{Bru06}.  The statement given by Brualdi can be
translated to that given here using the correspondence, discussed in Section~\ref{mat},
between $\P(G,b)$ and $\N_{\le Z}(R,S)$
for the case in which $G$ is bipartite and does not contain multiple edges, and Proposition~\ref{multempt}.
\begin{proof}The necessity of the condition is given by~(ii) of Lemma~\ref{nonempnec}.

Proceeding to the proof of sufficiency, define $d\in\R^V$ by
$d_v=|\delta_G(v)|$ for each $v\in V$ (i.e., $d_v$ is the degree of~$v$), and define $y\in\R^E$
(where $E\ne\emptyset$ ensures that $\R^E\ne\{0\}$)
by $y_e=1$ for each $e\in E$.
Then~$y$ is a strictly positive element of $\P(G,d)$.  Therefore,
using~(ii) of Lemma~\ref{nonempnec},
$\sum_{v\in C}d_v\ge\sum_{v\in V\setminus C}d_v$ for each vertex cover~$C$ of $G$,
with equality holding if and only if $V\setminus C$ is also a vertex cover of~$G$.

Now assume that the condition of the theorem is satisfied.
It can then be shown that $b_v>0$ for each $v\in V$ with $d_v>0$, i.e.,
for each nonisolated vertex $v$.
(More specifically, this can be done by considering a nonisolated vertex $w$,
and a bipartition $(U,W)$ for $G$, with $w\in W$.
Then, choosing the vertex cover $C=U$ gives $\sum_{v\in U}b_v=\sum_{v\in W}b_v$,
while choosing the vertex cover $C=U\cup\{w\}$ gives $\sum_{v\in U\cup\{w\}}b_v>\sum_{v\in W\setminus\{w\}}b_v$,
from which it follows that $b_w>0$.)

Now choose an $\epsilon>0$ which satisfies $\epsilon\,d_v\le b_v$ for each $v\in V$, and
$\epsilon\,(\sum_{v\in C}d_v-\sum_{v\in V\setminus C}d_v)\le\sum_{v\in C}b_v-\sum_{v\in V\setminus C}b_v$ for each
vertex cover $C$ of $G$,
where the conditions satisfied by $b$ and $d$ guarantee the existence of such an $\epsilon$.
It follows that $b-\epsilon\,d$ has all of its entries nonnegative, and satisfies the condition of Theorem~\ref{bipnonemptth}
(i.e., $\sum_{v\in C}(b_v-\epsilon\,d_v)\ge\sum_{v\in V\setminus C}(b_v-\epsilon\,d_v)$ for each vertex cover~$C$ of $G$),
so that $\P(G,b-\epsilon\,d)\ne\emptyset$.  Finally, choose an $x\in\P(G,b-\epsilon\,d)$.  Then it can be seen
that $x+\epsilon\,y$ is a strictly positive element of~$\P(G,b)$.
\end{proof}

Theorems~\ref{bipnonemptth} and~\ref{bipposth}, which apply to the case of bipartite $G$, can now be used to give
analogous results for the case of arbitrary $G$. These results will be expressed in preliminary forms in Lemma~\ref{nonemptlem},
and then restated in more compact forms in Theorems~\ref{nonemptth} and~\ref{posth}.
\newpage
\begin{lemma}\label{nonemptlem}
\mbox{}\\[-5mm]
\begin{list}{(\roman{listnumber})}{\usecounter{listnumber}\setlength{\labelwidth}{6mm}\setlength{\leftmargin}{9mm}\setlength{\itemsep}{1mm}}
\item A necessary and sufficient condition for $\P(G,b)$ to be nonempty is that $\sum_{v\in U_1}b_v\ge\sum_{v\in W_2}b_v$
for all sets~$U_1$,~$U_2$,~$W_1$ and~$W_2$ such that $V=U_1\cupdot U_2=W_1\cupdot W_2$ and $G[U_2,W_2]=\emptyset$.
\item Let $E$ be nonempty. Then a necessary and sufficient condition for $\P(G,b)$ to contain a strictly positive element is that
the condition of (i) is satisfied, with its inequality holding as an equality if and only if $G[U_1,W_1]=\emptyset$.
\end{list}
\end{lemma}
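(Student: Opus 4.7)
I will reduce both parts of Lemma~\ref{nonemptlem} to Theorems~\ref{bipnonemptth} and~\ref{bipposth} by passing to a bipartite double cover $G'$ of $G$. Take $G'$ to be the bipartite graph on vertex set $V\times\{1,2\}$ obtained by replacing each non-loop edge $\{u,w\}$ of $G$ by the pair of edges $\{(u,1),(w,2)\}$ and $\{(u,2),(w,1)\}$ in $G'$, and each loop at $u\in V$ by the single edge $\{(u,1),(u,2)\}$; the bipartition is $(V\times\{1\},V\times\{2\})$, and I set $b'_{(v,i)}=b_v$ for every $v\in V$ and $i\in\{1,2\}$. Two explicit positivity-preserving maps then give $\P(G,b)\ne\emptyset \iff \P(G',b')\ne\emptyset$ and the analogous statement for strictly positive elements: from $x\in\P(G,b)$ one builds $x'\in\P(G',b')$ by setting $x'$ equal to $x_e$ on each of the one or two copies of $e$ in the edge set of $G'$, and from $x'\in\P(G',b')$ one builds $x\in\P(G,b)$ by setting $x_e=x'_{e'}$ on a loop $e$ and $x_e=\tfrac{1}{2}(x'_{e_1}+x'_{e_2})$ on a non-loop $e$; verifying the $\P$-constraints in each case is a one-line calculation using the constraints at $(v,1)$ and $(v,2)$.

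The next step is to put vertex covers of $G'$ in bijection with the four-tuples $(U_1,U_2,W_1,W_2)$ of the lemma. Any subset of the vertex set of $G'$ has the form $C'=(A_1\times\{1\})\cup(A_2\times\{2\})$ for some $A_1,A_2\subset V$, and a routine case analysis on the loop and non-loop edges of $G'$ shows that $C'$ is a vertex cover of $G'$ exactly when $G[V\setminus A_1,V\setminus A_2]=\emptyset$; setting $U_1=A_1$, $U_2=V\setminus A_1$, $W_1=A_2$ and $W_2=V\setminus A_2$ then matches precisely the lemma's condition on $(U_1,U_2,W_1,W_2)$. Applying Theorem~\ref{bipnonemptth} to $(G',b')$ yields the condition $\sum_{v\in U_1}b_v+\sum_{v\in W_1}b_v\ge\sum_{v\in U_2}b_v+\sum_{v\in W_2}b_v$ for all such four-tuples. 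Using $\sum_{v\in U_1}b_v+\sum_{v\in U_2}b_v=\sum_{v\in V}b_v=\sum_{v\in W_1}b_v+\sum_{v\in W_2}b_v$, the difference between the two sides equals $2\bigl(\sum_{v\in U_1}b_v-\sum_{v\in W_2}b_v\bigr)$, so this condition is equivalent to the single inequality $\sum_{v\in U_1}b_v\ge\sum_{v\in W_2}b_v$ of part~(i), which yields (i).

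For part~(ii), Theorem~\ref{bipposth} adds that equality must hold exactly when the complement of $C'$ in the vertex set of $G'$ is also a vertex cover of $G'$; the same translation shows that this occurs if and only if $G[U_1,W_1]=\emptyset$, and since the bipartite inequality is exactly twice the inequality in part~(i), equality in one is equivalent to equality in the other. I expect the main obstacle to be the vertex-cover translation in the second step, where the bookkeeping of loops and of the parallel pairs of $G'$-edges arising from non-loop edges of $G$ must be done carefully; once that is laid out cleanly, everything else is algebraic manipulation or direct application of the bipartite results.
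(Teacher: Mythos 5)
Your proposal is correct and follows essentially the same route as the paper: both pass to a bipartite double cover $G'$ with $b'_{(v,i)}=b_v$, transfer (strictly positive) elements back and forth by the obvious averaging maps, and then apply Theorems~\ref{bipnonemptth} and~\ref{bipposth} to $(G',b')$, translating vertex covers of $G'$ into the four-tuples $(U_1,U_2,W_1,W_2)$ of the lemma. The only differences are cosmetic (you replace a loop by a single edge of $G'$ where the paper uses a parallel pair, and you carry out the vertex-cover-to-four-tuple translation explicitly rather than invoking the alternative forms stated after Theorem~\ref{bipnonemptth}), and neither affects the argument.
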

Note that the conditions in the lemma remain unchanged if the inequality is replaced by $\sum_{v\in W_1}b_v\ge\sum_{v\in U_2}b_v$.
\begin{proof}
Let $G'$ be a so-called `bipartite double graph' of $G$.  Specifically, let $G'$ have vertex set $V'=V\times\{1,2\}$
and edge set $E'=E\times\{1,2\}$, where if $e\in E$ connects vertices $u$ and~$w$ of~$V$, then
one of the edges $(e,1)$ or $(e,2)$ of $E'$ connects vertices $(u,1)$ and $(w,2)$ of $V'$,
while the other connects vertices $(w,1)$ and $(u,2)$ of $V'$.
Also define $b'\in\R^{V'}$ by $b'_{(v,1)}=b'_{(v,2)}=b_v$ for each $v\in V$.
It can now be checked that $\P(G,b)\ne\emptyset$ if and only if $\P(G',b')\ne\emptyset$.
In particular, if there exists $x\in\P(G,b)$, then there exists $x'\in\P(G',b')$ given by $x'_{(e,1)}=x'_{(e,2)}=\mu_ex_e/2$ for
each $e\in E$, where $\mu_e=2$ if $e$ is not a loop and $\mu_e=1$ if $e$ is a loop.
Conversely, if there exists $x'\in\P(G',b')$, then there exists $x\in\P(G,b)$ given by $x_e=(x'_{(e,1)}+x'_{(e,2)})/\mu_e$ for
each $e\in E$.
It follows similarly that $\P(G,b)$ contains a strictly positive element if and only if $\P(G',b')$ contains a strictly positive element.

Since $G'$ is bipartite, with bipartition $(V\times\{1\},V\times\{2\})$,
it follows from Theorem~\ref{bipnonemptth} (using one of the alternative forms given after the statement of that theorem,
and noting that $\sum_{v\in V\times\{1\}}b'_v=\sum_{v\in V\times\{2\}}b'_v$),
that a necessary and sufficient condition for $\P(G,b)$ to be nonempty is that
$\sum_{v\in U'_1}b'_v\ge\sum_{v\in W'_2}b'_v$ for all sets~$U'_1$,~$U'_2$,~$W'_1$ and~$W'_2$
such that $V\times\{1\}=U'_1\cupdot U'_2$, $V\times\{2\}=W'_1\cupdot W'_2$ and $G'[U'_2,W'_2]=\emptyset$.

Similarly, it follows from Theorem~\ref{bipposth} (using one of the alternative forms given after the statement of that theorem),
that a necessary and sufficient condition for $\P(G,b)$ to contain a strictly positive element is that
the previous condition for nonemptiness is satisfied,
with its inequality holding as an equality if and only if $G'[U'_1,W'_1]=\emptyset$.

Finally, it can easily be seen that the previous two conditions are equivalent to the
corresponding conditions of the lemma.
\end{proof}

\begin{theorem}\label{nonemptth}
A necessary and sufficient condition for $\P(G,b)$ to be nonempty is that
$\sum_{v\in V_1}b_v\ge\sum_{v\in V_3}b_v$ for all
sets $V_1$, $V_2$ and $V_3$ such that $V=V_1\cupdot V_2\cupdot V_3$ and $G[V_2\cup V_3,V_3]=\emptyset$.
\end{theorem}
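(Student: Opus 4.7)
The plan is to deduce Theorem~\ref{nonemptth} from Lemma~\ref{nonemptlem}(i) by showing that its three-part-partition condition is just a repackaging of the four-set condition in the lemma. Since the lemma already supplies a necessary and sufficient criterion for $\P(G,b)$ to be nonempty, the entire proof reduces to verifying that the two conditions are equivalent, which is essentially a set-theoretic exercise with no new analytic content.

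For the direction ``theorem condition $\Rightarrow$ lemma condition,'' I would start with $U_1, U_2, W_1, W_2$ satisfying $V = U_1 \cupdot U_2 = W_1 \cupdot W_2$ and $G[U_2, W_2] = \emptyset$, and reparametrize by setting $V_1 = U_1 \cap W_1$, $V_2 = (U_1 \cap W_2) \cup (U_2 \cap W_1)$ and $V_3 = U_2 \cap W_2$, which are pairwise disjoint and cover $V$. The main step to verify is $G[V_2 \cup V_3, V_3] = \emptyset$: since $V_2 \cup V_3 = (U_1 \cap W_2) \cup U_2$ and $V_3 \subset U_2 \cap W_2$, any edge of $G[V_2 \cup V_3, V_3]$ would have, via a two-case split on whether its $V_2 \cup V_3$-endpoint lies in $U_1 \cap W_2$ or in $U_2$, one endpoint in $U_2$ and the other in $W_2$, contradicting $G[U_2, W_2] = \emptyset$. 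Applying the theorem's inequality to this $(V_1, V_2, V_3)$ then gives $\sum_{v \in U_1 \cap W_1} b_v \ge \sum_{v \in U_2 \cap W_2} b_v$, and adding $\sum_{v \in U_1 \cap W_2} b_v$ to both sides recovers the lemma's inequality $\sum_{v \in U_1} b_v \ge \sum_{v \in W_2} b_v$.

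For the converse, given a partition $V = V_1 \cupdot V_2 \cupdot V_3$ with $G[V_2 \cup V_3, V_3] = \emptyset$, I would set $U_1 = V_1$, $U_2 = V_2 \cup V_3$, $W_1 = V_1 \cup V_2$ and $W_2 = V_3$. Then $V = U_1 \cupdot U_2 = W_1 \cupdot W_2$, $G[U_2, W_2] = G[V_2 \cup V_3, V_3] = \emptyset$, and the lemma's inequality $\sum_{v \in U_1} b_v \ge \sum_{v \in W_2} b_v$ is literally $\sum_{v \in V_1} b_v \ge \sum_{v \in V_3} b_v$. The only thing requiring any thought is guessing the right correspondence (intersection cells on one side, a non-symmetric split on the other); once it is written down, every verification reduces to basic set algebra, so I do not anticipate any real obstacle.
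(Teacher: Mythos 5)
Your proposal is correct and follows essentially the same route as the paper: both directions use exactly the same correspondences (the non-symmetric split $U_1=V_1$, $U_2=V_2\cup V_3$, $W_1=V_1\cup V_2$, $W_2=V_3$ one way, and the intersection cells $V_1=U_1\cap W_1$, $V_3=U_2\cap W_2$ the other way), reducing the theorem to Lemma~\ref{nonemptlem}(i). The only cosmetic difference is that you verify $G[V_2\cup V_3,V_3]=\emptyset$ by a case split on endpoints where the paper uses the distributivity of $G[\cdot,\cdot]$ over unions; both are fine.
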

Note that the appearance of $V_2$ in this theorem could be removed by
rewriting the condition as $\sum_{v\in V_1}b_v\ge\sum_{v\in V_3}b_v$ for all
disjoint subsets $V_1$ and $V_3$ of $V$ such that $G[V\setminus V_1,V_3]=\emptyset$.
Note also that $G[V_2\cup V_3,V_3]=\emptyset$ is equivalent to $G[V_2,V_3]=G[V_3,V_3]=\emptyset$.
Furthermore, in the condition of the theorem, $V_3$ can be restricted to being nonempty
(since if sets $V_1$, $V_2$ and $V_3$ satisfy $V=V_1\cupdot V_2\cupdot V_3$ and $V_3=\emptyset$,
then $G[V_2\cup V_3,V_3]=\emptyset$ and $\sum_{v\in V_1}b_v\ge\sum_{v\in V_3}b_v$ are automatically
satisfied).

It can also be seen that, if the condition of this theorem is satisfied, and if
sets $V_1$,~$V_2$ and~$V_3$ satisfy $V=V_1\cupdot V_2\cupdot V_3$ and $G[V_1,V_1\cup V_2]=G[V_2\cup V_3,V_3]=\emptyset$.
then $\sum_{v\in V_1}b_v=\sum_{v\in V_3}b_v$  (i.e., the inequality then holds as an equality).

\begin{proof}It will be shown that the condition of the theorem is equivalent to the condition of~(i) of Lemma~\ref{nonemptlem}.
(Alternatively, the necessity of the condition of the theorem could easily be proved directly.)

First, let the condition of (i) of Lemma~\ref{nonemptlem} be satisfied, and
consider any sets~$V_1$,~$V_2$ and~$V_3$ for which
$V=V_1\cupdot V_2\cupdot V_3$ and $G[V_2\cup V_3,V_3]=\emptyset$.
Now take~$U_1$, $U_2$,~$W_1$ and~$W_2$ to be
$U_1=V_1$, $U_2=V_2\cup V_3$, $W_1=V_1\cup V_2$ and $W_2=V_3$.
Then~$V=U_1\cupdot U_2=W_1\cupdot W_2$ and $G[U_2,W_2]=\emptyset$,
so that, since the condition of (i) of Lemma~\ref{nonemptlem} is satisfied,
$\sum_{v\in U_1}b_v\ge\sum_{v\in W_2}b_v$.  Therefore $\sum_{v\in V_1}b_v\ge\sum_{v\in V_3}b_v$,
and hence the condition of the theorem is satisfied.

Conversely, let the condition of the theorem be satisfied, and
consider any sets~$U_1$,~$U_2$, $W_1$ and~$W_2$ for which $V=U_1\cupdot U_2=W_1\cupdot W_2$ and $G[U_2,W_2]=\emptyset$.
Now take~$V_1$,~$V_2'$,~$V_2''$,~$V_2$ and~$V_3$ to be
$V_1=U_1\cap W_1$, $V_2'=U_2\cap W_1$, $V_2''=U_1\cap W_2$, $V_2=V_2'\cup V_2''$ and $V_3=U_2\cap W_2$.
Then~$U_1=V_1\cup V_2''$, $U_2=V_2'\cup V_3$, $W_1=V_1\cup V_2'$, $W_2=V_2''\cup V_3$, and
$V=V_1\cupdot V_2\cupdot V_3$.  Also, $\emptyset=G[U_2,W_2]=G[V_2'\cup V_3,V_2''\cup V_3]=G[V_2'\cup V_3,V_3]
\cup G[V_2''\cup V_3,V_3]\cup G[V_2',V_2'']=G[V_2\cup V_3,V_3]\cup G[V_2',V_2'']$, and therefore $G[V_2\cup V_3,V_3]=\emptyset$.
So, since the condition of the theorem is satisfied,
$\sum_{v\in V_1}b_v\ge\sum_{v\in V_3}b_v$, which gives $\sum_{v\in V_1\cup V_2''}b_v\ge\sum_{v\in V_3\cup V_2''}b_v$,
and thus $\sum_{v\in U_1}b_v\ge\sum_{v\in W_2}b_v$.
Hence, the condition of (i) of Lemma~\ref{nonemptlem} is satisfied.\end{proof}

\begin{theorem}\label{posth}
Let $E$ be nonempty.  Then a necessary and sufficient condition for $\P(G,b)$ to contain a strictly positive element
is that $\sum_{v\in V_1}b_v\ge\sum_{v\in V_3}b_v$ for all
sets $V_1$, $V_2$ and~$V_3$ such that $V=V_1\cupdot V_2\cupdot V_3$ and $G[V_2\cup V_3,V_3]=\emptyset$
(i.e., the condition of Theorem~\ref{nonemptth} is satisfied),
with the inequality holding as an equality if and only if $G[V_1,V_1\cup V_2]=\emptyset$.
\end{theorem}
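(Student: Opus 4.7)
The plan is to show that the condition stated in Theorem~\ref{posth} is equivalent to the condition of Lemma~\ref{nonemptlem}(ii), which already characterizes when $\P(G,b)$ contains a strictly positive element. This runs parallel to the proof of Theorem~\ref{nonemptth}, where the condition of that theorem was shown equivalent to the condition of Lemma~\ref{nonemptlem}(i) via two explicit correspondences between triples $(V_1,V_2,V_3)$ satisfying $V=V_1\cupdot V_2\cupdot V_3$ and $G[V_2\cup V_3,V_3]=\emptyset$, and quadruples $(U_1,U_2,W_1,W_2)$ satisfying $V=U_1\cupdot U_2=W_1\cupdot W_2$ and $G[U_2,W_2]=\emptyset$. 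Those correspondences already handle the sum inequality and the special case of equality of sums, so the only extra task is to match the equality clauses.

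Concretely, I would reuse exactly the correspondences from the proof of Theorem~\ref{nonemptth} and check that $G[V_1,V_1\cup V_2]=\emptyset$ if and only if $G[U_1,W_1]=\emptyset$ in each direction. Under the first correspondence $(V_1,V_2,V_3)\mapsto(U_1,U_2,W_1,W_2)=(V_1,V_2\cup V_3,V_1\cup V_2,V_3)$, one has $U_1=V_1$ and $W_1=V_1\cup V_2$, so $G[U_1,W_1]=G[V_1,V_1\cup V_2]$ trivially. Under the second correspondence, with $V_1=U_1\cap W_1$, $V_2=(U_2\cap W_1)\cup(U_1\cap W_2)$ and $V_3=U_2\cap W_2$, I would expand $G[U_1,W_1]$ using the partitions $U_1=(U_1\cap W_1)\cupdot(U_1\cap W_2)$ and $W_1=(U_1\cap W_1)\cupdot(U_2\cap W_1)$, and similarly expand $G[V_1,V_1\cup V_2]$, to obtain the identity $G[U_1,W_1]=G[V_1,V_1\cup V_2]\cup G[U_2\cap W_1,U_1\cap W_2]$. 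The extra piece $G[U_2\cap W_1,U_1\cap W_2]$ lies in $G[U_2,W_2]$, which is empty by hypothesis, so the two sets coincide and the equality clauses correspond.

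Once these two checks are in place, Theorem~\ref{posth} follows immediately: the condition of the theorem, together with the condition of Lemma~\ref{nonemptlem}(ii), are logically equivalent, and the latter is already known to characterize the existence of a strictly positive element. The main (and essentially only) obstacle is the bookkeeping in the second correspondence, i.e., carrying out the set arithmetic on the six-way intersection refinement of $V$ and recognizing that the hypothesis $G[U_2,W_2]=\emptyset$ is precisely what is needed to kill the unwanted piece $G[U_2\cap W_1,U_1\cap W_2]$. Everything else is already embedded in the proof of Theorem~\ref{nonemptth}.
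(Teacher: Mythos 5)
Your proposal is correct and follows essentially the same route as the paper's own proof: both reduce the theorem to Lemma~\ref{nonemptlem}(ii) by reusing the two correspondences from the proof of Theorem~\ref{nonemptth} and verifying that $G[U_1,W_1]=G[V_1,V_1\cup V_2]$ under each, with the hypothesis $G[U_2,W_2]=\emptyset$ killing the extra piece $G[U_2\cap W_1,U_1\cap W_2]$ in the second direction exactly as you describe. The only detail worth making explicit is that the differences $\sum_{v\in U_1}b_v-\sum_{v\in W_2}b_v$ and $\sum_{v\in V_1}b_v-\sum_{v\in V_3}b_v$ coincide, so the equality cases of the inequalities also correspond.
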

This theorem, stated in terms of matrices, is due to Brualdi.
See~\cite[Thm.~3.7]{Bru76} and \cite[Thm.~8.2.3]{Bru06}.
The statement given by Brualdi can be
translated to that given here using the correspondence, discussed in Section~\ref{mat}, between $\P(G,b)$ and $\N_{\le Z}(R)$
for the case in which $G$ does not contain multiple edges, and Proposition~\ref{multempt}.
\begin{proof}It will be shown, by extending the proof
of Theorem~\ref{nonemptth}, that the condition of the theorem is equivalent to the condition of~(ii) of Lemma~\ref{nonemptlem}.
(Again, the necessity of the condition of the theorem could easily be proved directly instead.)

First, let the condition of (ii) of Lemma~\ref{nonemptlem} be satisfied,
consider any sets~$V_1$,~$V_2$ and~$V_3$ for which
$V=V_1\cupdot V_2\cupdot V_3$ and $G[V_2\cup V_3,V_3]=\emptyset$,
and take~$U_1$, $U_2$,~$W_1$ and~$W_2$ to be the same as in
the first part of the proof of Theorem~\ref{nonemptth}.
Then~$V=U_1\cupdot U_2=W_1\cupdot W_2$, $G[U_2,W_2]=G[V_2\cup V_3,V_3]=\emptyset$,
$G[U_1,W_1]=G[V_1,V_1\cup V_2]$, and $\sum_{v\in U_1}b_v-\sum_{v\in W_2}b_v=
\sum_{v\in V_1}b_v-\sum_{v\in V_3}b_v$.  It can now be seen that
the condition of the theorem is satisfied, since the condition of~(ii) of Lemma~\ref{nonemptlem} is satisfied.

Conversely, let the condition of the theorem be satisfied,
consider any sets~$U_1$,~$U_2$,~$W_1$ and~$W_2$ for which $V=U_1\cupdot U_2=W_1\cupdot W_2$ and $G[U_2,W_2]=\emptyset$,
and take~$V_1$,~$V_2'$,~$V_2''$,~$V_2$ and~$V_3$ to be the same as in the second part of the proof of Theorem~\ref{nonemptth}.
Then $V=V_1\cupdot V_2\cupdot V_3$,
and $\emptyset=G[U_2,W_2]=G[V_2\cup V_3,V_3]\cup G[V_2',V_2'']$, so that $G[V_2\cup V_3,V_3]=G[V_2',V_2'']=\emptyset$.
Also,  $G[U_1,W_1]=
G[V_1\cup V_2'',V_1\cup V_2']=
G[V_1,V_1\cup V_2']\cup G[V_1,V_1\cup V_2'']\cup G[V_2',V_2'']=G[V_1,V_1\cup V_2]$ (using $G[V_2',V_2'']=\emptyset$),
and $\sum_{v\in V_1}b_v-\sum_{v\in V_3}b_v=\sum_{v\in V_1\cup V_2''}b_v-\sum_{v\in V_3\cup V_2''}b_v=
\sum_{v\in U_1}b_v-\sum_{v\in W_2}b_v$.
It can now be seen that the condition of~(ii) of Lemma~\ref{nonemptlem} is satisfied, since the condition of the theorem is satisfied
\end{proof}

\section{Relevant results for graphs}\label{inc}
In this section, some relevant general results concerning the incidence matrix~$I_G$
of an arbitrary graph~$G$ (which may contain loops and multiple edges) are obtained.
Most of these results involve the nullity of~$I_G$ with respect to the field~$\R$,
i.e., the dimension of the kernel, or nullspace, of $I_G$ with respect to $\R$, where this kernel is
explicitly $\{x\in\R^E\mid I_G\,x=0\}=
\{x\in\R^E\mid\sum_{e\in\delta_G(v)}x_e=0\mbox{ for each }v\in V\}$.  The results will be applied to~$\P(G,b)$
in Section~\ref{facesPGb}.

Note that in the literature, the field $\{0,1\}$ is often used instead of $\R$, and in this
case the kernel of $I_G$ is the so-called cycle space of~$G$.
Note also that various results which are closely related to those of this section have appeared in the literature.
See, for example, Akbari, Ghareghani, Khosrovshahi and Maimani~\cite[Thm.~2]{AkbGhaKhoMai06}, or Villarreal~\cite[Cor.~3.2]{Vil95}.

\begin{proposition}\label{incnulllem}
The nullity of the incidence matrix of $G$ is $|E|-|V|+B$, where $B$ is the number of bipartite components of~$G$.\end{proposition}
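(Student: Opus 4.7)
The plan is to reduce to the case of a connected graph, and then to compute the rank of $I_H$ by inspecting its left kernel. Observe first that if $G$ has connected components $G_1,\ldots,G_k$, then, after permuting rows and columns, $I_G$ is block diagonal with blocks $I_{G_1},\ldots,I_{G_k}$. Consequently $\nullity(I_G)=\sum_i\nullity(I_{G_i})$, and the quantities $|E|$, $|V|$, and $B$ are likewise additive over components. It therefore suffices to prove that for a connected graph~$H$ one has $\nullity(I_H)=|E(H)|-|V(H)|+1$ if $H$ is bipartite and $\nullity(I_H)=|E(H)|-|V(H)|$ otherwise. By the rank--nullity identity $\nullity(I_H)=|E(H)|-\rank(I_H)$, this is equivalent to showing that $\rank(I_H)=|V(H)|-1$ in the bipartite case and $\rank(I_H)=|V(H)|$ otherwise.

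I would compute $\rank(I_H)$ by analysing the left kernel $\{r\in\R^{V(H)}\mid r^{\mathrm{T}}I_H=0\}$. Because each non-loop edge $e$ with endpoints $u,v$ produces a column whose only nonzero entries are $1$s in rows $u$ and $v$, and each loop at $v$ produces a column with a single $1$ in row $v$, the condition $r^{\mathrm{T}}I_H=0$ translates into the relations $r_u+r_v=0$ for every non-loop edge $uv$ of~$H$ and $r_v=0$ for every loop at~$v$. The rest of the argument is a case split based on whether $H$ contains an odd-length cycle, using the conventions of Section~\ref{convnot} under which a loop is a cycle of length~$1$.

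Suppose $H$ is non-bipartite, so $H$ contains an odd cycle $v_0v_1\cdots v_{2k}v_0$ with $k\ge 0$. If $k=0$, the cycle is a loop at $v_0$ and the defining relations force $r_{v_0}=0$ directly; if $k\ge 1$, iterating $r_{v_{i+1}}=-r_{v_i}$ around the cycle gives $r_{v_0}=(-1)^{2k+1}r_{v_0}=-r_{v_0}$, so again $r_{v_0}=0$. Propagating this zero through $H$ via $r_u=-r_v$ along edges, and using that $H$ is connected, yields $r=0$. Hence the left kernel is trivial and $\rank(I_H)=|V(H)|$, as required.

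Suppose instead that $H$ is bipartite with bipartition $(U,W)$. Then $H$ has no loops, and setting $r_v=1$ for $v\in U$ and $r_v=-1$ for $v\in W$ gives a nonzero element of the left kernel, so $\rank(I_H)\le|V(H)|-1$. For the matching lower bound I would show that the left kernel is at most one-dimensional: fixing a vertex $v_0$, the relations $r_u=-r_v$ combined with the connectedness of $H$ determine $r_v$ for every $v$ from $r_{v_0}$, since the sign $(-1)^{\text{length of walk from }v_0\text{ to }v}$ is independent of the chosen walk (bipartiteness ensures all walks between two vertices have the same parity). Thus $\nullity(I_H^{\mathrm{T}})=1$, giving $\rank(I_H)=|V(H)|-1$. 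Summing over components completes the proof. The only real subtlety is ensuring the case analysis correctly accommodates loops and length-$2$ cycles, which is handled transparently by the stated conventions; everything else is a direct walk-parity argument on connected graphs.
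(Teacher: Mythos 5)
Your proposal is correct and takes essentially the same route as the paper: the paper's proof also establishes the equivalent statement $\nullity(I_G{}^T)=B$ by solving the relations $y_u=-y_w$ for adjacent vertices component by component (odd cycles forcing the zero solution on nonbipartite components, a one-parameter $\pm\lambda_C$ solution on bipartite ones) and then invokes rank--nullity. Your explicit handling of loops and the walk-parity argument matches the paper's treatment.
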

Note that the fact that $\rank(A)=\rank(A^T)=n-\nullity(A)$, for any real matrix $A$ with~$n$ columns,
implies that the result of this proposition is equivalent to
\begin{equation}\label{incnulllem2}\rank(I_G)=|V|-B,\end{equation}
and to
\begin{equation}\label{incnulllem3}\nullity(I_G{}^T)=B,\end{equation}
where $B$ is again the number of bipartite components of~$G$.
These results, at least for the case of simple graphs, are
standard. (See, for example, Godsil and Royle~\cite[Thm.~8.2.1]{GodRoy04}.)

Note also that if $G$ is bipartite and planar, then it follows from this proposition, and Euler's formula for planar graphs
(which remains valid for graphs with multiple edges), that the nullity of the incidence matrix
of $G$ is the number of bounded faces in a planar embedding of~$G$.
\begin{proof}
The validity of the form~\eqref{incnulllem3} of the proposition will be confirmed.

The kernel of $I_G{}^T$ is $\{y\in\R^V\mid I_G{}^Ty=0\}=\{y\in\R^V\mid y_u=-y_w$ for all pairs~$u,w$ of adjacent vertices of~$G\}$.
By considering pairs of adjacent vertices successively along paths through
each component of~$G$, forming a bipartition $(U_C,W_C)$ for each bipartite component~$C$,
and using the fact that a nonbipartite component contains an odd-length cycle, it can be seen
that the general solution of the equations for $y$ is
\[y_v=\begin{cases}\lambda_C,&v\in U_C,\\
-\lambda_C,&v\in W_C,\\
0,&v\text{ is a vertex of a nonbipartite component,}\end{cases}\]
where $\lambda_C\in\R$ is arbitrary for each $C$.  It now follows that
$\nullity(I_G{}^T)=B$.\end{proof}

\begin{proposition}\label{zeronullity}
The nullity of the incidence matrix of $G$ is zero if and
only if each component of~$G$ either is acyclic or else contains exactly one cycle with that cycle having odd length.\end{proposition}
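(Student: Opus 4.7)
The plan is to apply Proposition \ref{incnulllem}, which gives $\nullity(I_G)=|E|-|V|+B$ with $B$ the number of bipartite components, and then to analyze this expression one component at a time. Since the quantity is a sum of local contributions, establishing that each contribution is nonnegative and identifying when it vanishes will immediately yield both directions of the equivalence.

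Specifically, I would decompose $G$ into its connected components $C_1,\ldots,C_k$, with vertex sets $V_i$ and edge sets $E_i$, and write $b_i=1$ if $C_i$ is bipartite and $b_i=0$ otherwise. Then
\[\nullity(I_G)\;=\;\sum_{i=1}^{k}\bigl(|E_i|-|V_i|+b_i\bigr),\]
so it suffices to show $|E_i|-|V_i|+b_i\ge 0$ for every $i$, with equality iff $C_i$ is acyclic or contains exactly one cycle with that cycle having odd length.

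To verify this, I would invoke the standard facts recorded in Section~\ref{convnot}: for a connected graph, $|E_i|=|V_i|-1$ iff $C_i$ is a tree, and $|E_i|=|V_i|$ iff $C_i$ contains exactly one cycle. Four cases then exhaust the possibilities. If $C_i$ is a tree, it is bipartite and contributes $(-1)+1=0$. If $C_i$ has exactly one cycle of odd length, then $C_i$ is nonbipartite and the contribution is $0+0=0$. If $C_i$ has exactly one cycle but it has even length, then $C_i$ is bipartite (having no odd cycle at all), so the contribution is $0+1=1$. Finally, if $C_i$ contains two or more cycles, then $|E_i|-|V_i|\ge 1$, and since $b_i\ge 0$ the contribution is at least $1$. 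In every case the contribution is nonnegative and the equality cases are precisely the first two, which establishes the proposition.

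There is no serious obstacle; the only point requiring a moment's care is the even-cycle subcase, where one must note that a connected graph whose only cycle has even length is automatically bipartite, which follows at once from the characterization of bipartiteness by absence of odd cycles recalled in Section~\ref{convnot}.
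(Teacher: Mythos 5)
Your proof is correct and follows essentially the same route as the paper's: both reduce the statement to Proposition~\ref{incnulllem} together with the standard edge--vertex counts for trees and unicyclic connected graphs and the characterization of bipartiteness by absence of odd cycles. The only cosmetic difference is that the paper first splits the kernel of $I_G$ as a direct sum over the components and requires each component's nullity to vanish separately, whereas you apply Proposition~\ref{incnulllem} to $G$ as a whole and observe that the resulting formula is a sum of nonnegative per-component contributions, each vanishing exactly in the two stated cases.
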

In this proposition, the choice of conditions for the components of $G$ applies independently to each component.
An alternative statement of the proposition is that the nullity of the incidence matrix of $G$ is zero
if and only if~$G$ has no even-length cycles and no component containing
more than one odd-length cycle.  It can also be seen that $\nullity(I_G)=0$ is equivalent to the condition that
$x=0$ is the only $x\in\R^E$ which satisfies
$\sum_{e\in\delta_G(v)}x_e=0$ for each $v\in V$.

Note that for the case of bipartite $G$, it follows from this proposition,
and the fact that a bipartite graph does not contain any odd-length cycles, that
the nullity of the incidence matrix of $G$ is zero if and
only if~$G$ is a forest.
\begin{proof}The kernel of $I_G$ is the direct sum of the kernels of the incidence matrices of its components.  Therefore,
$\nullity(I_G)=0$ if and only if $\nullity(I_C)=0$ for each component~$C$ of~$G$.  Applying Proposition~\ref{incnulllem}, these equations
are $|E_C|+1=|V_C|$ for each bipartite component~$C$ of $G$, and
$|E_C|=|V_C|$ for each nonbipartite component~$C$ of $G$, where $E_C$ and~$V_C$ are the edge
and vertex sets of $C$.  Using the fact that a connected graph~$C$ satisfies $|E_C|+1=|V_C|$ if and only if $C$ is acyclic,
and satisfies $|E_C|=|V_C|$ if and only if $C$ contains exactly one cycle, it now follows that $\nullity(I_G)=0$ if and only if
each bipartite component of $G$ is acyclic, and each nonbipartite component of $G$ contains exactly one cycle.  Finally, using the fact
that a graph is bipartite if and only if it does not contain any odd-length cycles, it follows that $\nullity(I_G)=0$ if and only if
each component of~$G$ either is acyclic or else contains exactly one cycle with that cycle having odd length.\end{proof}

Propositions~\ref{incnulllem} and \ref{zeronullity} can also be proved more directly.
Such alternative proofs provide further insight into these results,
so will now be outlined briefly.\\[2.5mm]
\emph{Alternative proof of Proposition~\ref{zeronullity}.}
First, let each component of~$G$ either be acyclic or else contain exactly one cycle with that cycle having odd length,
and let $x\in\R^E$ satisfy $I_G\,x=0$, i.e., $\sum_{e\in\delta_G(v)}x_e=0$ for each $v\in V$.
It follows immediately that $x_e=0$ for each pendant edge~$e$ (i.e., an edge incident to a univalent vertex).
By iteratively deleting such edges from~$E$ and considering the equation for $x$ at each univalent vertex $v$
in the resulting reduced graph, it then follows that $x_e=0$ for all edges $e$ of~$E$, except possibly those which are part of
disjoint cycles, where the length of each such cycle is odd and at least~3.
But if $e_1,\ldots,e_n$ are the edges of such a cycle, then the associated entries of~$x$ satisfy
$x_{e_n}+x_{e_1}=x_{e_1}+x_{e_2}=x_{e_2}+x_{e_3}=\ldots=x_{e_{n-1}}+x_{e_n}=0$,
and the fact that $n$ is odd implies that all of these entries are also~0.  Therefore, $x=0$ is the only solution
of $I_G\,x=0$, and so $\nullity(I_G)=0$.

Now, conversely, let it not be the case that each component of~$G$ is acyclic or contains exactly one cycle with that cycle having odd length.
Then $G$ contains an even-length cycle or two odd-length cycles connected by a path.
(It is assumed here that the two odd-length cycles either share no vertices, or else share
only one vertex, in which case the connecting path has length zero.  For if $G$ contains two odd-length cycles which share more than one
vertex, then~$G$ also has an even-length cycle, comprised of certain segments of the odd-length cycles.)
If~$G$ has an even-length cycle, then there exists $x\in\R^E$
which satisfies $I_G\,x=0$, where $x_e$ is alternately~$1$ and~$-1$ for each edge~$e$ along the cycle, and $x_e=0$ for each edge~$e$
not in the cycle.  If~$G$ contains two odd-length cycles connected by a path, then it can be seen that
there exists $x\in\R^E$ with $|x_e|=2$ for $e$ in the path or for~$e$ a loop, $|x_e|=1$ for~$e$ in a nonloop cycle,
and $x_e=0$ for~$e$ not in the path or either cycle, and where signs are assigned to the
nonzero entries of~$x$ so that $I_G\,x=0$.
(In the case in which~$G$ contains two loops connected by a path, there also exists
$x\in\R^E$ with $|x_e|=1$ if~$e$ is in the path or is one of the loops, and $x_e=0$ otherwise.)
Therefore, in each of these cases, there exists a nonzero
$x\in\R^E$ which satisfies $I_G\,x=0$, and so $\nullity(I_G)>0$.\hspace{\fill}$\Box$\\[2.5mm]
\emph{Alternative proof of Proposition~\ref{incnulllem}.} In this proof,
the arguments used in the alternative proof of Proposition~\ref{zeronullity} will be used to construct an explicit basis for the kernel of~$I_G$.
Let~$H$ be any spanning subgraph of $G$ with the property that, for each component~$C$ of $G$,
the subgraph of $H$ induced by the vertices of~$C$ is a
tree if~$C$ is bipartite, and is connected and contains exactly one cycle with that cycle having odd length if~$C$ is nonbipartite.
The existence of such a~$H$ is guaranteed by
the facts that a connected graph has a spanning tree and that a nonbipartite graph has an odd-length cycle.
It follows from the formulae relating numbers of edges and vertices in trees and in
connected graphs with exactly one cycle that
$|E'|=|V|-B$, and so $|E\setminus E'|=|E|-|V|+B$, where~$E'$ is the edge set of~$H$ and $B$ is the number of bipartite components of~$G$.
It can also be seen that, for each $f\in E\setminus E'$, the spanning subgraph of~$G$ with edge set $E'\cup\{f\}$ has
an even-length cycle containing~$f$, or two odd-length cycles (which share at most one vertex)
connected by a path, with one of those cycles containing~$f$.
Therefore, using the same argument as in the second part of the alternative proof of Proposition~\ref{zeronullity},
for each $f\in E\setminus E'$, there exists $x(f)\in\R^E$ satisfying the properties that $I_G\,x(f)=0$, $x(f)_f\ne0$, and
the edges $e$ for which~$x(f)_e\ne0$ are all contained in $E'\cup\{f\}$ and form either a single even-length cycle
or two odd-length cycles connected by a path. Choosing a particular such $x(f)$ for each $f\in E\setminus E'$,
it follows immediately that these are $|E|-|V|+B$ linearly independent elements of the kernel of~$I_G$.

It will now be shown that these vectors also span the kernel of~$I_G$.
First, let $y$ be any vector in the kernel of $I_G$,
and set $y'=\sum_{f\in E\setminus E'}y_f\,x(f)/x(f)_f$ (with $y'=0$ if $E\setminus E'=\emptyset$).
Then $y'_e=y_e$ for each $e\in E\setminus E'$ (since $x(f)_e=0$ for all $e\in E\setminus(E'\cup\{f\})$).
Also, $I_G\,y=I_G\,y'=I_G(y-y')=0$, and using the same argument as in the
first part of the alternative proof of Proposition~\ref{zeronullity}, it then follows that $(y-y')_e=0$ for each $e\in E'$, so that
$y=y'$.

Therefore, the vectors $x(f)$ with $f\in E\setminus E'$
form a basis of the kernel of $I_G$, and $\nullity(I_G)=|E|-|V|+B$.\hspace{\fill}$\Box$

\begin{proposition}\label{bipsollem}Consider an $a\in\R^V$, and for each bipartite component~$C$ of~$G$
let $(U_C,W_C)$ be a bipartition for~$C$.
Then a necessary and sufficient condition for there to exist an $x\in\R^E$ with $I_G\,x=a$ is that
\begin{equation}\label{bipsollemeq1}
\sum_{v\in U_C}a_v=\sum_{v\in W_C}a_v,\ \ \mbox{for each bipartite component }C\mbox{ of }G.\end{equation}\end{proposition}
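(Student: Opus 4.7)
The plan is to deduce this from Proposition~\ref{incnulllem} (or more precisely its equivalent form~\eqref{incnulllem3}) together with the explicit description of $\ker(I_G{}^T)$ that emerged in its proof, and the standard fact that the image of a real matrix is the orthogonal complement of the kernel of its transpose.

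First I would dispose of necessity by a direct edge-counting argument. Suppose $x\in\R^E$ satisfies $I_G\,x=a$, and fix a bipartite component $C$ of $G$ with bipartition $(U_C,W_C)$. Summing the equation $\sum_{e\in\delta_G(v)}x_e=a_v$ over $v\in U_C$, only edges of $C$ contribute, and each such edge has exactly one endpoint in $U_C$; hence the sum collapses to $\sum_{e\in E_C}x_e$, where $E_C$ is the edge set of $C$. Doing the same with $W_C$ yields the same total, proving~\eqref{bipsollemeq1}. This step is routine and also makes it clear that the condition is independent of the choice of bipartition, since swapping $U_C$ and $W_C$ merely swaps the two sides of the equality.

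For sufficiency, I would invoke the explicit form of $\ker(I_G{}^T)$ obtained in the proof of Proposition~\ref{incnulllem}: a basis is given by the vectors $y^C\in\R^V$, indexed by the bipartite components $C$ of $G$, with $y^C_v=1$ for $v\in U_C$, $y^C_v=-1$ for $v\in W_C$, and $y^C_v=0$ for all other $v$. The image of $I_G$ is the orthogonal complement of $\ker(I_G{}^T)$ in $\R^V$, so $a\in\R^V$ lies in the image precisely when $\langle a,y^C\rangle=0$ for each bipartite component $C$, and this inner product is exactly $\sum_{v\in U_C}a_v-\sum_{v\in W_C}a_v$. A quick dimension check confirms the bookkeeping: the condition~\eqref{bipsollemeq1} cuts out a subspace of $\R^V$ of codimension $B$ (the $B$ equations are linearly independent since each involves only the coordinates of one component), and $\dim\mathrm{image}(I_G)=|V|-B$ by~\eqref{incnulllem2}, so the inclusion $\mathrm{image}(I_G)\subset\{a\mid\text{\eqref{bipsollemeq1} holds}\}$ established by necessity is forced to be an equality.

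There is essentially no obstacle beyond cleanly citing the explicit description of $\ker(I_G{}^T)$ from the proof of Proposition~\ref{incnulllem}; the only point requiring a line of comment is that the condition is well-posed, i.e., independent of which of the two possible bipartitions is chosen for each bipartite component, which is immediate from the symmetry of the equality.
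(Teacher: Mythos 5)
Your proof is correct, and its sufficiency half takes a genuinely different route from the paper's. The necessity argument is identical in substance: both sum the vertex equations over $U_C$ and over $W_C$ and observe that each sum counts every edge of $C$ exactly once. For sufficiency, the paper works directly with the linear system: it uses the identity behind \eqref{bipsollemeq2} to discard one (now redundant) equation per bipartite component, and then invokes $\rank(I_G)=|V|-B$ from \eqref{incnulllem2} to conclude that the remaining $|V|-B$ equations are linearly independent and hence consistent for any right-hand side. You instead characterize $\mathrm{im}(I_G)$ as $\ker(I_G{}^T)^\perp$ and read off the orthogonality conditions from the explicit basis of $\ker(I_G{}^T)$ exhibited in the proof of Proposition~\ref{incnulllem}; your back-up dimension count (the image of $I_G$ has dimension $|V|-B$ and sits inside the codimension-$B$ subspace cut out by \eqref{bipsollemeq1}, so the two coincide) is in fact a complete self-contained argument needing only the necessity step and \eqref{incnulllem2}, not the explicit kernel description. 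Both routes ultimately rest on Proposition~\ref{incnulllem}; the paper's stays at the level of the equations themselves and cites only the statement \eqref{incnulllem2} rather than the interior of another proof, while yours is shorter once the standard fact $\mathrm{im}(A)=\ker(A^T)^\perp$ and the kernel basis are granted. The only points to make explicit are the ones you already flag: the $B$ functionals are linearly independent because they are nonzero and have pairwise disjoint supports, and the condition is well-posed, i.e., independent of the choice of bipartition for each component.
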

Note that this result provides a necessary and sufficient condition for
there to exist an assignment of real numbers to the edges of~$G$
such that the sum of the numbers over all edges incident to any
vertex~$v$ is a prescribed real number~$a_v$.
\begin{proof}Consider any $x\in\R^E$, and any bipartite component~$C$ of~$G$.  It can be seen that
$\sum_{v\in U_C}\sum_{e\in\delta_G(v)}x_e=\sum_{v\in W_C}\sum_{e\in\delta_G(v)}x_e$, since each side is the sum of~$x_e$ over all edges~$e$ of~$C$.
Therefore,
\begin{equation}\label{bipsollemeq2}\textstyle\sum_{v\in W_C}\Bigl(\sum_{e\in\delta_G(v)}x_e-a_v\Bigr)-
\sum_{v\in U_C}\Bigl(\sum_{e\in\delta_G(v)}x_e-a_v\Bigr)=
\sum_{v\in U_C}a_v-\sum_{v\in W_C}a_v.\end{equation}
If $I_G\,x=a$ then the LHS of~\eqref{bipsollemeq2} immediately vanishes, and so~\eqref{bipsollemeq2} implies that~\eqref{bipsollemeq1} is satisfied.
Conversely, if~\eqref{bipsollemeq1} is satisfied then the RHS of~\eqref{bipsollemeq2} immediately vanishes, and so~\eqref{bipsollemeq2} enables
an equation $\sum_{e\in\delta_G(v)}x_e=a_v$ for a single vertex $v$ of each bipartite component of $G$ to be eliminated from the $|V|$ constituent
equations of $I_G\,x=a$.  This leaves $|V|-B$ equations,
where~$B$ is the number of bipartite components of $G$.  Using~\eqref{incnulllem2}, these remaining equations are linearly independent, and
therefore have a solution.\end{proof}

\begin{proposition}\label{uniquesollem}
Consider an $a\in\R^V$, and for each bipartite component~$C$ of~$G$
let $(U_C,W_C)$ be a bipartition for~$C$.
\begin{list}{(\roman{listnumber})}{\usecounter{listnumber}\setlength{\labelwidth}{6mm}\setlength{\leftmargin}{9mm}\setlength{\itemsep}{1mm}}
\item A necessary and sufficient condition for there to exist
a unique $x\in\R^E$ with $I_G\,x=a$ is that~\eqref{bipsollemeq1} is satisfied, and
each component of~$G$ either is acyclic or else
contains exactly one cycle with that cycle having odd length.
\item If the condition of (i) is satisfied, then the unique $x\in\R^E$ with $I_G\,x=a$
is given explicitly by
\begin{equation}\label{xe}x_e=k_e\sum_{v\in V_{G\setminus e}(t_e)}(-1)^{d_{G\setminus e}(v,t_e)}\,a_v,
\text{ for each }e\in E,
\end{equation}
where
\begin{align}k_e&=\begin{cases}\frac{1}{2},&e\text{ is an edge of a nonloop cycle of }G,\\
1,&\text{otherwise,}\end{cases}\\
\label{te}t_e&=\begin{cases}\mbox{the endpoint of $e$ furthest from }L,&e\text{ is an edge of a component}\\[-1.5mm]
&\text{of }G\text{ that contains a single}\\[-1.5mm]
&\text{cycle }L,\text{ but }e\text{ is not in }L,\\
\text{an arbitrarily-chosen endpoint of }e,&\text{otherwise,}\end{cases}\end{align}
$G\setminus e$ is the graph obtained by deleting edge~$e$ from $G$,
$V_{G\setminus e}(t_e)$ is the vertex set of the component of $G\setminus e$ which contains $t_e$,
and $d_{G\setminus e}(v,t_e)$ is the length of the (necessarily unique) path between~$v$ and~$t_e$ in $G\setminus e$.
\end{list}
\end{proposition}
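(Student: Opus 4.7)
Part~(i) is a direct consequence of Propositions~\ref{bipsollem} and~\ref{zeronullity}. Since $\{x\in\R^E\mid I_G\,x=a\}$ is either empty or a coset of $\ker(I_G)$, it contains exactly one element if and only if a solution exists and $\ker(I_G)=\{0\}$. Proposition~\ref{bipsollem} identifies existence with~\eqref{bipsollemeq1}, and Proposition~\ref{zeronullity} identifies $\nullity(I_G)=0$ with every component of~$G$ being either acyclic or unicyclic with its cycle of odd length, yielding~(i).

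For part~(ii), uniqueness is already settled, so it suffices to check that the vector~$x$ defined by~\eqref{xe} satisfies $I_G\,x=a$. My plan is a cut-based verification: fix an edge~$e$, and let $c\in\R^V$ be given by $c_v=(-1)^{d_{G\setminus e}(v,t_e)}$ for $v\in V_{G\setminus e}(t_e)$ and $c_v=0$ otherwise. Forming $\sum_{u\in V}c_u\bigl(\sum_{e'\in\delta_G(u)}x_{e'}\bigr)$ and rearranging gives $\sum_{e'\in E}\alpha(e')\,x_{e'}=\sum_{u\in V}c_u\,a_u$, where $\alpha(e')$ is the sum of~$c$ over the endpoints of~$e'$, with a loop at~$v$ contributing $c_v$ once (consistent with the convention that a loop appears once in $\delta_G(v)$). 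Under the hypothesis of~(i), the subgraph of $G\setminus e$ induced on $V_{G\setminus e}(t_e)$ is a tree, so every edge $e'\ne e$ with both endpoints in $V_{G\setminus e}(t_e)$ has its two endpoints at distances from~$t_e$ differing by one, giving $\alpha(e')=0$; edges $e'\ne e$ disjoint from $V_{G\setminus e}(t_e)$ have $\alpha(e')=0$ trivially; and no other edges arise. So only $x_e$ survives, with $\alpha(e)=1$ if~$e$ is a tree edge (only $t_e$ lies in the selected set), $\alpha(e)=1$ if~$e$ is a loop, and $\alpha(e)=2$ if~$e$ is a nonloop cycle edge in an odd cycle~$L$ (both endpoints of~$e$ lie in $V_{G\setminus e}(t_e)=V_C$ at the same parity distance from $t_e$, since the path $L\setminus\{e\}$ has even length). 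Since $k_e$ in~\eqref{xe} is exactly $1/\alpha(e)$, the cut identity produces the required value of $x_e$.

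Two well-definedness points must accompany this argument. First, for a tree edge~$e$ in a tree component, swapping~$t_e$ between the two endpoints replaces $V_{G\setminus e}(t_e)$ by the complementary subtree, and the bipartite identity~\eqref{bipsollemeq1} applied to that component forces the two alternating sums to coincide. Second, for a nonloop cycle edge in an odd cycle~$L$, the two possible choices of~$t_e$ differ by traversing the even-length path $L\setminus\{e\}$, so all distance parities in~$V_C$ are preserved. The main obstacle is not any single deep step but the careful case analysis across tree edges, loops, and nonloop cycle edges; once the cut identity above is in place each case reduces to a routine distance-parity check justified by the structural hypothesis of~(i).
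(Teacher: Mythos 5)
Your proof is correct, but part~(ii) proceeds by a genuinely different route from the paper's. The paper verifies the vertex equations directly: it fixes a vertex~$v$ and shows, by a case analysis on the type of~$v$ (vertex of a tree, vertex with a loop attached, vertex of a nonloop cycle, vertex of a unicyclic component lying off its cycle), that the~$x$ defined by~\eqref{xe} satisfies $\sum_{e\in\delta_G(v)}x_e=a_v$, invoking~\eqref{bipsollemeq1} at the last step. You instead fix an edge~$e$ and pair the system $I_G\,x=a$ with the test vector~$c$ supported on $V_{G\setminus e}(t_e)$, so the case analysis runs over edge types rather than vertex types. Your computations are right: under the hypothesis of~(i) the subgraph induced on $V_{G\setminus e}(t_e)$ is always a tree, so $\alpha(e')=0$ for every $e'\ne e$ and $\alpha(e)=1/k_e$ in each of the three cases, and your two well-definedness checks reproduce the remarks the paper places after the statement of the proposition. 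What your approach buys is that the formula is \emph{derived} rather than checked, one edge at a time. One point of logic should be straightened out, however: you announce that you will ``check that the vector~$x$ defined by~\eqref{xe} satisfies $I_G\,x=a$,'' but the displayed identity $\sum_{e'}\alpha(e')x_{e'}=\sum_u c_u a_u$ presupposes $I_G\,x=a$ rather than establishing it. What your cut identity actually proves is that \emph{any} solution of $I_G\,x=a$ must have $x_e=k_e\sum_v c_v a_v$; this suffices because Proposition~\ref{bipsollem}, via condition~\eqref{bipsollemeq1} in the hypothesis of~(i), guarantees that a solution exists, but that direction of the argument should be stated explicitly. (To make the literal verification reading rigorous you would instead need to add that the matrix of values $\alpha(e')$, indexed by pairs of edges, is diagonal and invertible, and that $I_G\,x-a$ lies in the column space of~$I_G$.)
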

Note that the path between~$v$ and~$t_e$ in~$G\setminus e$ is unique since,
for all cases of~\eqref{xe},~$v$ and~$t_e$ are vertices of a component of~$G\setminus e$ which
is acyclic.

It can also be checked that if there is choice for $t_e$ in~\eqref{te}, which occurs if $e$ is an an edge of a tree or of a nonloop cycle,
then the RHS of~\eqref{xe} is independent of that choice.
For example, consider an edge $e$ of a tree with vertex set~$T$, let the endpoints of~$e$ be~$u$ and~$w$,
and denote the length of the path between any two vertices~$v$ and~$v'$ in~$T$ as $d_T(v,v')$. Then
the RHS of~\eqref{xe} is $\sum_{v\in V_{G\setminus e}(u)}(-1)^{d_T(v,u)}\,a_v$ for the choice $t_e=u$,
and $\sum_{v\in V_{G\setminus e}(w)}(-1)^{d_T(v,w)}\,a_v=
\sum_{v\in V_{G\setminus e}(w)}(-1)^{d_T(v,u)-1}\,a_v=
-\sum_{v\in V_{G\setminus e}(w)}(-1)^{d_T(v,u)}\,a_v$ for the choice $t_e=w$.
Therefore, since $T=V_{G\setminus e}(u)\cupdot V_{G\setminus e}(w)$,
the difference between the previous expressions is $\sum_{v\in T}(-1)^{d_T(v,u)}\,a_v$, which vanishes due to the condition~\eqref{bipsollemeq1}
satisfied by $a$.
\begin{proof}The validity of (i) follows from Propositions~\ref{zeronullity} and~\ref{bipsollem}.

Now let the condition of (i) be satisfied.  Then it can be verified directly that $x$, as given by~\eqref{xe}, satisfies
$\sum_{e\in\delta_G(v)}x_e=a_v$ for each $v\in V$, and hence that (ii) is valid.
The nature of the verification process depends on whether~$v$ is
a vertex of a tree,~$v$ is a vertex with a loop attached,~$v$ is a vertex of a nonloop cycle,
or~$v$ is a vertex of a component which contains a cycle but with~$v$ not in the cycle.
The details for the first of these cases will now be given explicitly, with those for the others being similar.
So, let~$v$ be a vertex of a tree with vertex set $T$, for each $e\in\delta_G(v)$ choose $t_e$ to be the endpoint of~$e$ other than~$v$,
and (as before) denote the length of the path between vertices~$v'$ and~$v''$ in~$T$ as $d_T(v',v'')$.
Then
\begin{align*}\displaystyle
\sum_{e\in\delta_G(v)}x_e&=\sum_{e\in\delta_G(v)}\sum_{u\in V_{G\setminus e}(t_e)}(-1)^{d_T(u,t_e)}\,a_u
=\sum_{e\in\delta_G(v)}\sum_{u\in V_{G\setminus e}(t_e)}(-1)^{d_T(u,v)-1}\,a_u\\
&\displaystyle=\:-\!\sum_{e\in\delta_G(v)}\sum_{u\in V_{G\setminus e}(t_e)}(-1)^{d_T(u,v)}\,a_u
\,=\:a_v-\sum_{u\in T}(-1)^{d_T(u,v)}\,a_u\,=\:a_v,\end{align*}
where the second-last equality follows from the fact that $T\setminus\{v\}$ is the union of
the mutually disjoint sets $V_{G\setminus e}(t_e)$ over all~$e\in\delta_G(v)$,
and the last equality follows from the condition~\eqref{bipsollemeq1}
satisfied by $a$.\end{proof}

Note that if $a$ and $G$, as given in Proposition~\ref{bipsollem}, satisfy the condition~\eqref{bipsollemeq1}
in that proposition (but not necessarily the condition of (i) in Proposition~\ref{uniquesollem}),
then a (not necessarily unique) $x\in\R^E$ with $I_G\,x=a$
(whose existence is guaranteed by Proposition~\ref{bipsollem}) can be obtained as follows.  First, let $H$ be a spanning subgraph
of $G$, chosen to satisfy the same properties as the $H$ used in the alternative proof of Proposition~\ref{incnulllem}.  Then it follows
from~(ii) of Proposition~\ref{uniquesollem} (using $H$ instead of $G$) that there exists a unique $x'\in\R^{E'}$ with $I_{H}\,x'=a$, where $E'$ is the edge
set of $H$.  The required $x\in\R^E$ is then given by $x_e=x'_e$ for each $e\in E'$, and $x_e=0$ for each $e\in E\setminus E'$.

\begin{proposition}\label{onenullity}
The nullity of the incidence matrix of $G$ is~$1$ if and only if $G$ has a component~$C$ such that
each component of~$G$ other than $C$ either is acyclic or else contains exactly one cycle with that cycle having odd length,
while the cycle content of $C$ is one of the following:
\begin{itemize}
\item exactly one cycle, with that cycle having even length, or
\item exactly two cycles, with at least one of those cycles having odd length, or
\item exactly one even-length and exactly two odd-length cycles, with any two of those cycles sharing at least one edge.
\end{itemize}
\end{proposition}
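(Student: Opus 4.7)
The plan is to reduce to the component-level case via the well-known direct-sum decomposition of $\ker I_G$, and then to pin down the cycle structure of the one ``special'' component using the cyclomatic number together with a fundamental-cycle argument.

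First I would observe that $\ker I_G$ is the direct sum of the kernels $\ker I_C$ over the components $C$ of $G$, so $\nullity(I_G)=\sum_C \nullity(I_C)$. Hence $\nullity(I_G)=1$ if and only if there is exactly one component $C^*$ with $\nullity(I_{C^*})=1$ while every other component $C'$ has $\nullity(I_{C'})=0$. Applying Proposition~\ref{zeronullity} to each such $C'$ immediately gives the ``acyclic or exactly one odd cycle'' description for the components other than $C^*$.

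Next, for the distinguished component $C^*$, I would apply Proposition~\ref{incnulllem}, which gives $\nullity(I_{C^*})=|E_{C^*}|-|V_{C^*}|+B_{C^*}$, where $B_{C^*}\in\{0,1\}$ records whether $C^*$ is bipartite. If $C^*$ is bipartite, setting this to $1$ forces $|E_{C^*}|=|V_{C^*}|$, which by the standard fact recalled in Section~\ref{convnot} means $C^*$ contains exactly one cycle; since $C^*$ is bipartite, this cycle has even length. This yields the first bullet. If $C^*$ is nonbipartite, then $|E_{C^*}|=|V_{C^*}|+1$, so the cycle space of $C^*$ has dimension $2$ over $\{0,1\}$ (equivalently, any spanning tree is missed by exactly two edges $e_1,e_2$).

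The main work, and the step I expect to be the main obstacle, is then to translate ``cycle space of dimension $2$'' into a statement about the number and parities of the \emph{simple} cycles of $C^*$. I would fix a spanning tree $T$ of $C^*$, let $Z_1,Z_2$ be the fundamental cycles associated to $e_1,e_2$, and split into two cases according to whether $Z_1$ and $Z_2$ share an edge. If they share no edge, then the third nonzero element $Z_1\triangle Z_2$ of the cycle space is an edge-disjoint union of two cycles, so the simple cycles of $C^*$ are exactly $Z_1$ and $Z_2$; nonbipartiteness then forces at least one to be odd, producing the second bullet. If $Z_1$ and $Z_2$ share at least one edge, then $Z_1\triangle Z_2$ is itself a simple cycle, so $C^*$ has exactly three simple cycles, any two of which share an edge (since $Z_1\cap Z_2$, $Z_1\setminus Z_2$, $Z_2\setminus Z_1$ are each nonempty by construction). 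The parity identity $|Z_1|+|Z_2|+|Z_1\triangle Z_2|=2(|Z_1|+|Z_2|-|Z_1\cap Z_2|)$ shows the number of odd cycles among the three is even, and nonbipartiteness rules out $0$, giving exactly one even and two odd cycles and the third bullet. For the converse direction, each of the three listed cycle structures for $C^*$ can be checked directly to yield $|E_{C^*}|-|V_{C^*}|+B_{C^*}=1$, and combining with Proposition~\ref{zeronullity} for the remaining components closes the argument.
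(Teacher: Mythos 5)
Your proposal is correct and follows essentially the same route as the paper: decompose $\ker I_G$ over components, apply Proposition~\ref{zeronullity} to the other components and Proposition~\ref{incnulllem} to the distinguished one, and then translate $|E_C|-|V_C|+B_C=1$ into the stated cycle structure. The only difference is that the paper simply asserts that a connected graph satisfies $|E_C|=|V_C|+1$ if and only if it has exactly two cycles or exactly three pairwise edge-sharing cycles (``the latter fact can be derived straightforwardly''), whereas you supply an actual proof of this via fundamental cycles and the $\mathrm{GF}(2)$ cycle space, together with the parity count that the paper instead deduces from bipartiteness.
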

Note that for the case of bipartite $G$, it follows from this proposition,
and the fact that a bipartite graph does not contain any odd-length cycles, that
the nullity of the incidence matrix of $G$ is~$1$ if and
only if~$G$ contains exactly one cycle.
\begin{proof}The structure of this proof is similar to that of the proof of Proposition~\ref{zeronullity}.
The fact that the kernel of $I_G$ is the direct sum of the kernels of the incidence matrices of its components now implies
that $\nullity(I_G)=1$ if and only if there exists a component~$C$ of~$G$ such that
$\nullity(I_C)=1$, and $\nullity(I_{C'})=0$ for each component~$C'$ of~$G$ other than~$C$.
Using Proposition~\ref{zeronullity}, the equation $\nullity(I_{C'})=0$, for each component~$C'$ other than~$C$, is equivalent to the condition
that~$C'$ either is acyclic or else contains exactly one cycle with that cycle having odd length.
Using Proposition~\ref{incnulllem}, the equation $\nullity(I_C)=1$ is equivalent to
$|E_C|=|V_C|$ if $C$ is bipartite, or $|E_C|=|V_C|+1$ if $C$ is nonbipartite, where~$E_C$ and~$V_C$ are the edge and vertex sets of~$C$.
Using the facts that a connected graph~$C$ satisfies $|E_C|=|V_C|$ if and only if $C$ contains exactly one cycle,
and satisfies $|E_C|=|V_C|+1$ if and only if $C$ contains either exactly two cycles
or else exactly three cycles, with any two of the three cycles sharing at least one edge
(where the latter fact can be derived straightforwardly),
it now follows that $\nullity(I_C)=1$ if and only if $C$ either is bipartite and contains exactly one cycle,
or else is nonbipartite and contains exactly two cycles
or exactly three cycles, with any two of the three cycles sharing at least one edge.
Finally, the conditions on the parities of the lengths of the cycles of~$C$, as given in the statement of the proposition, follow from
the fact that a graph is bipartite if and only if it does not contain any odd-length cycles.
\end{proof}

\section{Relevant results for polytopes}\label{poly}
In this section, definitions are given for supports, and some relevant standard
results involving faces, dimensions, vertices and edges of polytopes are obtained.
These general results will be applied to~$\P(G,b)$ in Section~\ref{facesPGb}.

Let~$N$ be a finite set, and define the support of any $X\subset\R^N$ as
\begin{equation}\label{supp}\supp(X):=\{i\in N\mid\mbox{there exists }x\in X\mbox{ with }x_i\ne0\},\end{equation}
and the support of any $x\in\R^N$ as
\begin{align}\notag\supp(x)&:=\supp(\{x\})\\
\label{suppx}&\,=\{i\in N\mid x_i\ne0\}.\end{align}

Some simple but useful properties of supports, which follow immediately from~\eqref{supp}, are that, for any $X_1,X_2\subset\R^N$,
\begin{equation}\label{suppimpl}X_1\subset X_2\text{ implies }\supp(X_1)\subset\supp(X_2),\end{equation}
and that, for any set $\mathcal{R}$ of subsets of $\R^N$,
\begin{align}\label{suppint}\supp\biggl(\bigcap_{X\in\mathcal{R}}X\biggr)&\subset\bigcap_{X\in\mathcal{R}}\supp(X),\\
\label{suppunion}\supp\biggl(\bigcup_{X\in\mathcal{R}}X\biggr)&=\bigcup_{X\in\mathcal{R}}\supp(X),\end{align}
with empty intersections in~\eqref{suppint} taken as $\bigcap_{X\in\emptyset}X=\R^N$ and $\bigcap_{X\in\emptyset}\supp(X)=N$.
It follows that, for any $X\subset\R^N$,
\begin{equation}\label{suppXsuppx}\supp(X)=\bigcup_{x\in X}\supp(x).\end{equation}

Now let $M$ be a further finite set, $A$ be a real matrix with rows and columns indexed by~$M$ and~$N$ respectively,~$a$ be a
vector in~$\R^M$, and $P$ be a polytope which can be written as
\begin{equation}\label{polytope}P=\{x\in\R^N\mid x_i\ge0\mbox{ for each }i\in N, \ A\,x=a\}.\end{equation}

The following results, Propositions~\ref{facelem},~\ref{vertlem}
and~\ref{edgelem}, will provide information regarding
the faces, vertices and edges, respectively, of the polytope $P$ given by~\eqref{polytope}.
These results are all closely related to standard results for polyhedra.

\begin{proposition}\label{facelem}
Let $F$ be a nonempty face of the polytope $P$ given by~\eqref{polytope}, and define $N'=\supp(F)$,~$A'$ to be the
submatrix of $A$ obtained by restricting the columns of $A$ to those indexed by~$N'$, and
\begin{equation}\label{facelem1}F':=\{y\in\R^{N'}\mid y_i\ge0\mbox{ for each }i\in N', \ A'\,y=a\}.\end{equation}
Then:
\begin{list}{(\roman{listnumber})}{\usecounter{listnumber}\setlength{\labelwidth}{6mm}\setlength{\leftmargin}{9mm}\setlength{\itemsep}{1mm}}
\item $F'$ is a polytope which is affinely isomorphic to $F$.
\item The dimension of $F$ (and also the dimension of $F'$) equals the nullity of $A'$.
\end{list}
\end{proposition}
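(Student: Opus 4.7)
The plan is to produce an explicit affine isomorphism between $F'$ and $F$ using zero-extension and restriction along the set $N'$. Let $\phi:\R^{N'}\to\R^N$ be zero-extension, $\phi(y)_i=y_i$ for $i\in N'$ and $\phi(y)_i=0$ otherwise, and let $\psi:\R^N\to\R^{N'}$ be restriction to the coordinates indexed by $N'$. Both maps are linear, and satisfy $\psi\circ\phi=\mathrm{id}_{\R^{N'}}$. Since $\supp(F)=N'$ forces $x_i=0$ for every $x\in F$ and every $i\in N\setminus N'$, we also have $\phi\circ\psi=\mathrm{id}_F$. A direct check shows $\psi$ carries $F$ into $F'$: if $x\in F$, then $x\ge 0$, $Ax=a$, and $x_i=0$ off $N'$, so $A'\psi(x)=Ax=a$ and $\psi(x)\ge 0$. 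The remaining content of (i) is the reverse containment $\phi(F')\subset F$.

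To establish $\phi(F')\subset F$, I would first manufacture a point $\bar x\in F$ with $\supp(\bar x)=N'$ by averaging witnesses of the support: for each $i\in N'$ pick $x^{(i)}\in F$ with $x^{(i)}_i>0$, and take the uniform convex combination, which lies in $F$ by convexity and is strictly positive on every coordinate in $N'$. Next, invoke the fact that every face of a polytope is an exposed face, so $F=\{x\in P\mid c^Tx=c^*\}$ for some linear functional $c$ and scalar $c^*$ with $c^Tx\ge c^*$ throughout $P$. Given $y\in F'$, set $x=\phi(y)$ and consider $\bar x+t(x-\bar x)$ for small $|t|$. Because $\bar x$ is strictly positive on $N'$ while $x$ and $\bar x$ both vanish off $N'$, and because $A(\bar x+t(x-\bar x))=a$ for every $t$, this segment lies in $P$ for all small $t$ in both directions. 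Evaluating $c^T(\,\cdot\,)\ge c^*$ at $t=\pm\epsilon$ and combining with $c^T\bar x=c^*$ forces $c^T(x-\bar x)=0$, hence $c^Tx=c^*$, hence $x\in F$, as needed.

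For (ii), the affine isomorphism from (i) immediately yields $\dim F=\dim F'$, so it suffices to compute $\dim F'$. The polytope $F'$ is contained in the affine subspace $\{y\in\R^{N'}\mid A'y=a\}$, whose dimension is $\nullity(A')$ (this subspace is nonempty, as it contains $\psi(\bar x)$). To see that the affine hull of $F'$ fills this subspace, observe that $\psi(\bar x)$ is strictly positive on $N'$, so for any $v\in\ker(A')$ the perturbation $\psi(\bar x)+tv$ still satisfies $A'(\psi(\bar x)+tv)=a$ and remains nonnegative for all sufficiently small $|t|$; thus $\dim F'=\nullity(A')$.

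The main obstacle is the middle step: promoting an arbitrary element of $P$ whose support sits inside $N'$ to an element of the prescribed face $F$. This is where the face property enters essentially, via the supporting-hyperplane characterization, together with the fact that we can slide $\bar x$ in both directions along $x-\bar x$ without leaving $P$. Everything else reduces to bookkeeping about the two linear maps $\phi$ and $\psi$.
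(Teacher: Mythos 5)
Your argument is correct, and its skeleton coincides with the paper's: both proofs use the same pair of maps (restriction to $N'$ and zero-extension), identify $F'$ with $F$, and then compute $\dim F'$. The difference lies in how the two substantive steps are discharged. For the containment $\phi(F')\subset F$ — equivalently, the identity $F=\{x\in P\mid x_i=0\text{ for each }i\in N\setminus N'\}$ — the paper invokes the standard fact that every nonempty face of a polyhedron arises by setting a subset of the defining inequalities to equalities, which gives $F$ in that form for some $N''\supset N'$ and hence for $N'$ itself; you instead re-derive this from first principles via exposedness of $F$, a relative-interior point $\bar x$ obtained by averaging support witnesses, and a two-sided slide along $\bar x+t(x-\bar x)$ inside $P$. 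Likewise, for $\dim F'=\nullity(A')$ the paper cites the implicit-equalities formula for the dimension of a polyhedron (after noting $\supp(F')=N'$, so that no inequality $y_i\ge0$ is an implicit equality), whereas you perturb the strictly positive point $\psi(\bar x)$ along $\ker(A')$ to show the affine hull of $F'$ is all of $\{y\mid A'y=a\}$. Your route is more self-contained — it needs only the supporting-hyperplane description of faces — at the cost of some extra work; the paper's is shorter by outsourcing both steps to cited polyhedral theory. (The only point worth tidying in your write-up is the degenerate case $N'=\emptyset$, where the averaging is over an empty index set; there $F=\{0\}$, $a=0$, and everything holds trivially.)
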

\begin{proof}
The fact that any nonempty face of a polyhedron can be obtained by setting a subset of the polyhedron's
defining inequalities to equalities (see, for example, Korte and Vygen~\cite[Prop.~3.4]{KorVyg12}, Schrijver~\cite[Sec.~8.2, Eq.~(11)]{Sch86}
or Schrijver~\cite[Eq.~(5.16)]{Sch03})
means that there exists some $N''\subset N$ for which
\begin{equation*}F=\{x\in\R^N\mid x_i\ge0\mbox{ for each }i\in N'',\ x_i=0\text{ for each }i\in N\setminus N'',\ A\,x=a\}.\end{equation*}
It can be seen, using $N'=\supp(F)$, that $N'\subset N''$, and hence that
\begin{equation*}F=\{x\in\R^N\mid x_i\ge0\mbox{ for each }i\in N',\ x_i=0\text{ for each }i\in N\setminus N',\ A\,x=a\}.\end{equation*}

Now consider the affine mapping from any $x\in\R^N$ to~$y\in\R^{N'}$ in which $y_i=x_i$ for each $i\in N'$,
and the affine mapping from any $y\in\R^{N'}$ to~$x\in\R^N$ in which
$x_i=y_i$ for each $i\in N'$, and $x_i=0$ for each $i\in N\setminus N'$.
Then these (essentially trivial) mappings, with their domains restricted to $F$ and $F'$ respectively, are mutual inverses,
and so~$F'$ is a polytope which is affinely isomorphic to $F$, thus confirming (i).

It follows immediately that $\dim(F)=\dim(F')$.
It can also be seen that~$N'=\supp(F')$, so that
none of the inequalities $y_i\ge0$ in~\eqref{facelem1} is an implicit equality.
Hence, using the fact that the dimension of a nonempty polyhedron is the nullity of the matrix associated with those of the polyhedron's
defining inequalities which are implicit equalities
(see, for example, Schrijver~\cite[Sec.~8.2, Eq.~(9)]{Sch86} or Schrijver~\cite[Thm.~5.6]{Sch03}),
it follows that $\dim(F')=\nullity(A')$, thus confirming (ii).
\end{proof}

\begin{proposition}\label{vertlem}
Let $u$ be an element of the polytope $P$ given by~\eqref{polytope}, and define~$A'$ to be the
submatrix of $A$ obtained by restricting the columns of $A$ to those indexed by~$\supp(u)$.
Then~$u$ is a vertex of $P$ if and only if $\nullity(A')=0$.\end{proposition}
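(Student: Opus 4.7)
The plan is to prove both directions directly, using the standard fact that $u$ is a vertex of the polytope $P$ if and only if $u$ cannot be written as $\tfrac{1}{2}(x+y)$ for two distinct $x,y\in P$. The key observation is that, since $u_i=0$ for each $i\in N\setminus\supp(u)$ and every element of $P$ has nonnegative entries, any such decomposition of $u$ must itself be supported on $\supp(u)$, so the question reduces to a linear-algebra statement about the submatrix $A'$.

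For the "if" direction, I would assume $\nullity(A')=0$ and let $u=\tfrac{1}{2}(x+y)$ with $x,y\in P$. For any $i\notin\supp(u)$ one has $x_i,y_i\ge 0$ and $x_i+y_i=2u_i=0$, forcing $x_i=y_i=0$. Writing $x',y',u'$ for the restrictions of $x,y,u$ to $\R^{\supp(u)}$, the identities $Ax=Ay=Au=a$ become $A'x'=A'y'=A'u'=a$, so $A'(x'-u')=0$. Since $\nullity(A')=0$, this gives $x'=u'$, hence $x=u$; similarly $y=u$. Therefore $u$ is a vertex.

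For the "only if" direction, I would argue contrapositively. Assume $\nullity(A')>0$ and choose a nonzero $z'\in\R^{\supp(u)}$ with $A'z'=0$. Extend $z'$ to $z\in\R^N$ by setting $z_i=0$ for $i\notin\supp(u)$; then $Az=0$ and $z\ne 0$. Because $u_i>0$ for every $i\in\supp(u)$, a sufficiently small $\epsilon>0$ makes both $u+\epsilon z$ and $u-\epsilon z$ entrywise nonnegative, while $A(u\pm\epsilon z)=a$. Thus $u\pm\epsilon z\in P$, and $u=\tfrac{1}{2}\bigl((u+\epsilon z)+(u-\epsilon z)\bigr)$ exhibits $u$ as the midpoint of two distinct points of $P$, so $u$ is not a vertex.

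There is no substantial obstacle here; the argument is elementary once one notices that positivity of $u$ on its support is exactly the slack needed to perturb along any kernel vector of $A'$ while remaining in $P$. As an alternative that leverages the result just proved, one could apply Proposition~\ref{facelem} to the minimal face $F$ of $P$ containing $u$: since $u$ lies in the relative interior of $F$, one has $\supp(F)=\supp(u)$, so part~(ii) of that proposition gives $\dim(F)=\nullity(A')$; then $u$ is a vertex iff $F=\{u\}$ iff $\dim(F)=0$ iff $\nullity(A')=0$.
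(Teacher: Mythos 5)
Your proof is correct, but it takes a genuinely different route from the paper's. The paper derives both directions from the face machinery it has just set up: for the ``only if'' direction it applies part~(ii) of Proposition~\ref{facelem} to the face $F=\{u\}$ (whose support is $\supp(u)$) to get $0=\dim(\{u\})=\nullity(A')$, and for the ``if'' direction it observes that when $\nullity(A')=0$ the system $A'y=a$ has the unique solution $u'$, so the face $\{x\in P\mid x_i=0\text{ for each }i\in N\setminus\supp(u)\}$ collapses to $\{u\}$. You instead work from the extreme-point characterization of vertices and give a self-contained perturbation argument: the nonnegativity constraints force any midpoint decomposition of $u$ to be supported on $\supp(u)$, reducing the ``if'' direction to injectivity of $A'$ on its domain, while a nonzero kernel vector of $A'$, padded with zeros, yields the perturbations $u\pm\epsilon z\in P$ for the contrapositive of ``only if'' (this is essentially the argument the paper itself uses later in its alternative proof of Theorem~\ref{vertth}). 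Your approach buys independence from Proposition~\ref{facelem} at the cost of invoking the standard equivalence between vertices and extreme points; the paper's approach buys brevity and uniformity with the surrounding face-theoretic development. Your closing alternative, via the minimal face containing $u$, is closer in spirit to the paper but still differs in detail: you must first argue that $\supp(F)=\supp(u)$ for that minimal face (which your relative-interior remark correctly supplies), whereas the paper sidesteps this by treating the two directions asymmetrically.
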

\begin{proof}If $u$ is a vertex of $P$, then $\{u\}$ is a face of $P$,
and so, using (ii) of Proposition~\ref{facelem}, $0=\dim(\{u\})=\nullity(A')$.
Conversely, if $\nullity(A')=0$, then the equation $A'\,y=a$ has the unique solution
$y\in\R^{N'}$ given by $y_i=u_i$ for each $i\in N'$, where $N'=\supp(u)$.
It then follows that $\{x\in P\mid x_i=0\mbox{ for each }i\in N\setminus N'\}=\{u\}$,
which implies that $\{u\}$ is face of~$P$, and hence that $u$ is a vertex of~$P$.
\end{proof}

\begin{corollary}\label{vertcor}
Let $u$ be a vertex of the polytope $P$ given by~\eqref{polytope}.  Then $|\supp(u)|\le\rank(A)$.\end{corollary}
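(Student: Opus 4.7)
The plan is to apply Proposition~\ref{vertlem} directly, together with the elementary fact that a submatrix obtained by deleting columns has rank no larger than the original matrix.

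More specifically, let $N' = \supp(u)$ and let $A'$ denote the submatrix of $A$ obtained by restricting the columns of $A$ to those indexed by $N'$, so that $A'$ has $|N'| = |\supp(u)|$ columns. Since $u$ is a vertex of $P$, Proposition~\ref{vertlem} gives $\nullity(A') = 0$. The rank-nullity identity then yields
\begin{equation*}
\rank(A') \;=\; |\supp(u)| - \nullity(A') \;=\; |\supp(u)|.
\end{equation*}

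The final step is to observe that $\rank(A') \le \rank(A)$, because the columns of $A'$ form a subset of the columns of $A$, and any set of columns of $A$ which is linearly independent when restricted to $A'$ remains linearly independent in $A$. Combining this with the previous equation gives $|\supp(u)| = \rank(A') \le \rank(A)$, as required.

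There is no real obstacle here: the corollary is an immediate consequence of Proposition~\ref{vertlem}, once one notes the two standard linear-algebra facts that $\rank + \nullity$ equals the number of columns, and that rank is monotone under passage to a column submatrix. So the proof can be presented in just a couple of lines.
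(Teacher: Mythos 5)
Your proposal is correct and follows essentially the same route as the paper: apply Proposition~\ref{vertlem} to get $\nullity(A')=0$, use rank--nullity to conclude $\rank(A')=|\supp(u)|$, and then bound $\rank(A')\le\rank(A)$ since $A'$ is a column submatrix of $A$. The paper's own proof is just a more compressed statement of exactly these steps.
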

\begin{proof}Using Proposition~\ref{vertlem}, $\nullity(A')=0$, where~$A'$ is the submatrix of $A$ obtained by restricting the columns
of~$A$ to those indexed by~$\supp(u)$. Therefore, $|\supp(u)|=\rank(A')\le\rank(A)$.
\end{proof}
Note that, in some contexts in the literature,
a vertex $u$ of the polytope $P$ given by~\eqref{polytope} is referred to as nondegenerate or degenerate according
to whether or not $|\supp(u)|=\rank(A)$.

\begin{proposition}\label{edgelem}
Let $u$ and $w$ be distinct vertices of the polytope $P$ given by~\eqref{polytope}, and define~$A'$ to be the
submatrix of $A$ obtained by restricting the columns of $A$ to those indexed by~$\supp(\{u,w\})$.
Then~$u$ and~$w$ are the vertices of an edge of $P$ if and only if $\nullity(A')=1$.\end{proposition}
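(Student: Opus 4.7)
The plan is to consider the smallest face $F$ of $P$ that contains both $u$ and $w$, and then apply Proposition~\ref{facelem}(ii) to compute $\dim(F) = \nullity(A')$. Since the intersection of faces of $P$ is a face, and $P$ itself contains $\{u,w\}$, such a smallest face $F$ exists.

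The key step is to verify that $\supp(F) = \supp(\{u,w\})$. The inclusion $\supp(\{u,w\}) \subset \supp(F)$ is immediate from~\eqref{suppimpl}. For the reverse inclusion, I would use the same fact invoked in the proof of Proposition~\ref{facelem}, namely that any nonempty face of $P$ arises by setting some subset of the inequalities $x_i \ge 0$ to equalities. In particular,
\[F_0 := \{x\in P \mid x_i=0 \text{ for each }i\in N\setminus\supp(\{u,w\})\}\]
is a face of $P$, and it contains both $u$ and $w$. Minimality of $F$ forces $F\subset F_0$, so $\supp(F)\subset\supp(\{u,w\})$. Proposition~\ref{facelem}(ii) then yields $\dim(F) = \nullity(A')$, so the proposition reduces to the claim that $u$ and $w$ are the vertices of an edge of $P$ if and only if $\dim(F)=1$.

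For the forward direction, if $u$ and $w$ are the vertices of an edge $E$ of $P$, then $E = \conv\{u,w\}$ is a face containing $\{u,w\}$, so $F\subset E$ by minimality, while $u,w\in F$ and convexity of $F$ give $E\subset F$. Hence $F=E$, and since $u\ne w$, $\dim(F)=1$. Conversely, if $\nullity(A')=1$, then $\dim(F)=1$. Both $u$ and $w$ are vertices of $P$ lying in $F$, hence (by the standard fact that vertices of $P$ contained in a face $F$ are also vertices of $F$) they are vertices of $F$; but a $1$-dimensional polytope has exactly two vertices, so $\vert(F)=\{u,w\}$ and $F=\conv\{u,w\}$, which is therefore an edge of $P$ with vertices $u$ and $w$.

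The only nonroutine step is establishing $\supp(F)=\supp(\{u,w\})$ for the smallest face $F\supset\{u,w\}$; once that identification is in place, Proposition~\ref{facelem}(ii) supplies the dimension formula and both implications follow from elementary polytope properties.
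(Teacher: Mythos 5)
Your proof is correct. It follows the same basic strategy as the paper's --- both reduce the statement to Proposition~\ref{facelem}(ii) applied to a face whose support is $\supp(\{u,w\})$ --- but the two arguments package this differently, most visibly in the converse direction. The paper works with the explicit candidate face $\{x\in P\mid x_i=0\text{ for each }i\in N\setminus\supp(\{u,w\})\}$ and, when $\nullity(A')=1$, solves the system $A'\,y=a$ directly: its solution set is the affine line through the restrictions of $u$ and $w$ to $\supp(\{u,w\})$, and the fact that $u$ and $w$ are vertices then pins the candidate face down to the segment $[u,w]$, which is therefore an edge. You instead pass to the minimal face $F$ containing $\{u,w\}$, identify $\supp(F)=\supp(\{u,w\})$ exactly as you describe, and finish with two abstract facts (vertices of $P$ lying in a face are vertices of that face; a one-dimensional polytope has exactly two vertices). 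Your route avoids any explicit computation with the linear system, at the cost of invoking slightly more general polytope theory; the paper's route is more hands-on and exhibits the edge $[u,w]$ explicitly as a face cut out by setting coordinates to zero. In the forward direction the two proofs essentially coincide, since your minimality argument shows $F=[u,w]$, while the paper simply observes $\supp([u,w])=\supp(\{u,w\})$ and applies Proposition~\ref{facelem}(ii) to $[u,w]$ directly.
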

Note that, using \eqref{suppunion}, $\supp(\{u,w\})=\supp(u)\cup\supp(w)$.
\begin{proof}It can be seen that
$\supp(\{u,w\})=\supp([u,w])$, where $[u,w]$ is the closed line segment between $u$ and~$w$.  Therefore, if~$u$ and~$w$ are the
vertices of an edge of $P$, then $[u,w]$ is a face of $P$, and so,
using~(ii) of Proposition~\ref{facelem}, $1=\dim([u,w])=\nullity(A')$.
Conversely, if $\nullity(A')=1$, then the equation $A'\,y=a$ for $y\in\R^{N'}$ has the general solution
$y=\lambda u'+(1-\lambda)w'$, where $N'=\supp(\{u,w\})$,
$u',w'\in\R^{N'}$ are given by $u_i'=u_i$ and $w'_i=w_i$ for each $i\in N'$, and $\lambda\in\R$ is arbitrary.
It then follows, since $u$ and $w$ are vertices of $P$, that $\{x\in P\mid x_i=0\mbox{ for each }i\in N\setminus N'\}=[u,w]$,
which implies that $[u,w]$ is a face of~$P$, and hence that $u$ and $w$ are the vertices of an edge of~$P$.
\end{proof}

\section{Relevant results for polytope face lattices}\label{facelatt}
In this section, some relevant, essentially standard, results concerning polytope face lattices are outlined.
In particular, for a polytope $P$ given by~\eqref{polytope} (with $a\ne0$), three isomorphic lattices (all partially ordered by set
inclusion) are considered, namely the
face lattice of $P$, the lattice of vertex sets of the faces of $P$, and the lattice of supports of the faces of $P$
(where these are denoted as $\F(P)$, $\V(P)$ and $\S(P)$, respectively).
Various expressions will be presented for elements of these
lattices, for the meet and join of subsets of these lattices, and for isomorphisms among the lattices.
The general results of this section will be applied to~$\P(G,b)$ in Section~\ref{facelattPGb}.

In contrast to the results in other sections of this paper,
some details of the proofs of the results of this section will be omitted.  However, full proofs can
be obtained straightforwardly using standard polyhedron and polytope theory,
as given, for example, in the books of Br\o nsted~\cite{Bro83}, Gr\"{u}nbaum~\cite{Gru03},
Korte and Vygen~\cite[Ch.~3]{KorVyg12},
Schrijver~\cite[Ch.~8--9]{Sch86}, Yemelichev, Kovalev and Kravtsov~\cite{YemKovKra84}, or Ziegler~\cite{Zie95}.  The
simple properties~\eqref{suppimpl}--\eqref{suppunion} of supports are also useful for some of these proofs.

Let $N$ be a finite set. For any subset $X$ of $\R^N$, and any polytope $P$ in $\R^N$, denote (as already indicated in
Section~\ref{convnot}) the convex hull of~$X$ as $\conv(X)$, the
set of vertices of~$P$ as $\vert(P)$,
the set of facets of~$P$ as $\facets(P)$, and the face lattice of~$P$ as~$\F(P)$.

Some of the statements which follow will involve the notation $\subset'$, where this means that such a statement
is valid if $\subset'$ is taken to be $\subset$, and also valid if $\subset'$ is taken to be $=$.

Now consider a given polytope $P$. Then the face lattice
$\F(P)$ is the set of all faces of~$P$ (including $\emptyset$ and~$P$) partially ordered by set inclusion,
where, for any $\H\subset\F(P)$, the infimum or meet of~$\H$ is the intersection of all the faces in $\H$,
and the supremum or join of $\H$ is the intersection of all those faces, or alternatively facets, of $P$ which contain
each face in $\H$, i.e.,
\begin{align}\inf(\H)&=\bigcap_{F\in\H}F,\\
\sup(\H)&=\bigcap_{\substack{F\in\F(P)\\\cup_{F'\in\H}F'\subset F}}\!\!F\;
=\bigcap_{\substack{F\in\facets(P)\\\cup_{F'\in\H}F'\subset F}}\!\!F,\end{align}
with an intersection over $\emptyset$ taken to be $P$.

Define $\V(P)$ to be the set of vertex sets of the faces of $P$, i.e.,
\begin{equation}\V(P):=
\{\vert(F)\mid F\in\F(P)\}.\end{equation}
This can also be written as
\begin{align}\notag\V(P)&\textstyle=\bigl\{\bigcap_{F\in\H}\vert(F)\bigm|\H\subset\facets(P)\bigr\}\\
\label{VP}&\textstyle=\Bigl\{U\subset\vert(P)\Bigm|\bigcap_{\substack{F\in\facets(P)\\U\subset\vert(F)}}\vert(F)\subset'U\Bigr\},\end{align}
with intersections over $\emptyset$ taken to be $\vert(P)$.

The mapping from each $F\in\F(P)$ to $\vert(F)=\vert(P)\cap F\in\V(P)$
is bijective, with an inverse which maps each $U\in\V(P)$ to $\conv(U)=\sup(\{\{u\}\mid u\in U\})=
\bigcap_{\substack{F'\in\F(P)\\U\subset F'}}F'=\bigcap_{\substack{F'\in\facets(P)\\U\subset F'}}F'\in\F(P)$,
and with the property that, for any $F_1,F_2\in\F(P)$, $F_1\subset F_2$ if and only if $\vert(F_1)\subset\vert(F_2)$.
Thus, $\V(P)$ is a lattice, partially ordered by set inclusion, which is isomorphic to the face lattice~$\F(P)$.
Also, for any $\mathcal{W}\subset\V(P)$, the meet and join in this lattice are
\begin{align}\inf(\mathcal{W})&=\bigcap_{U\in\mathcal{W}}U,\\
\sup(\mathcal{W})&=\bigcap_{\substack{U\in\V(P)\\\cup_{U'\in\mathcal{W}}U'\subset U}}\!\!U\;
=\bigcap_{\substack{F\in\facets(P)\\\cup_{U'\in\mathcal{W}}U'\subset\vert(F)}}\!\vert(F),\end{align}
with an intersection over $\emptyset$ taken to be $\vert(P)$.

Now let $P$ be a polytope which can be written as~\eqref{polytope},
for some real matrix $A$ with rows and columns indexed by finite sets~$M$ and~$N$ respectively,
and some real nonzero vector~$a$ with entries indexed by~$M$.
Note that the condition $a\ne0$, or equivalently $P\ne\{0\}$, is needed to ensure the validity of some of the results which follow.
For example, this condition is needed to guarantee that~$\emptyset$ is contained in the RHS of~\eqref{faces}.

The face lattice of $P$ can now be written as
\begin{equation}\label{faces}\F(P)=\bigl\{\{x\in P\mid x_i=0\text{ for each }i\in N'\}\bigm| N'\subset N\bigr\}.\end{equation}

Some useful properties involving supports, as defined in~\eqref{supp}--\eqref{suppx}, and faces of $P$ are that, for any $X\subset P$,
\begin{equation}\label{suppX}\supp(X)=\supp\Biggl(\bigcap_{\substack{F\in\F(P)\\X\subset F}}F\Biggr)
=\supp\Biggl(\bigcap_{\substack{F\in\facets(P)\\X\subset F}}F\Biggr),\end{equation}
and so, for any $x\in P$,
\begin{equation}\label{suppXx}\supp(x)=\supp\Biggl(\bigcap_{\substack{F\in\F(P)\\x\in F}}F\Biggr)
=\supp\Biggl(\bigcap_{\substack{F\in\facets(P)\\x\in F}}F\Biggr),\end{equation}
with an intersection over $\emptyset$ taken to be $P$.

Also, for any $X\subset P$ and $F\in\F(P)$,
\begin{equation}\label{suppiff}X\subset F\text{ if and only if }\supp(X)\subset\supp(F),\end{equation}
and so, for any $x\in P$ and $F\in\F(P)$,
\begin{equation}\label{suppiff2}x\in F\text{ if and only if }\supp(x)\subset\supp(F).\end{equation}
Note that the `only if' part of~\eqref{suppiff} follows immediately from~\eqref{suppimpl}.

A condition for the vertices of $P$ in terms of supports is that
an element~$u$ of~$P$ is a vertex of~$P$ if and only if there is no other element of $P$
whose support is contained in, or alternatively equal to, the support of~$u$, i.e., for any $u\in P$,
\begin{align}\notag u\in\vert(P)&\text{ if and only if }
\bigl\{x\in P\bigm|\supp(x)=\supp(u)\bigr\}\subset'\{u\}\\
\label{vertcond}&\text{ if and only if }\bigl\{x\in P\bigm|\supp(x)\subset\supp(u)\bigr\}\subset'\{u\}.\end{align}

Furthermore, for any $U\subset\vert(P)$,
\begin{equation}\{u\in\vert(P)\mid\supp(u)\subset\supp(U)\}=
\bigcap_{\substack{F\in\facets(P)\\U\subset\vert(F)}}\vert(F),\end{equation}
from which it follows, using~\eqref{VP}, that $\V(P)$ can be expressed in terms of supports as
\begin{equation}\label{VP2}\V(P)=\bigl\{U\subset\vert(P)\bigm|\{u\in\vert(P)\mid\supp(u)\subset\supp(U)\}\subset'U\bigr\}.\end{equation}

By considering two-element subsets $U$ of $\vert(P)$ in~\eqref{VP2}, it follows that
a condition for the edges of $P$ is that, for any distinct vertices $u$ and $w$ of $P$,
\begin{multline}\label{edgecond}\qquad\quad u\text{ and }w\text{ are the vertices of an edge of $P$ if and only if}\\
\bigl\{y\in\vert(P)\bigm|\supp(y)\subset\supp(\{u,w\})\bigr\}\subset'\{u,w\}.\qquad\end{multline}

Now define $\S(P)$ as the set of supports of the faces of $P$, i.e.,
\begin{equation}\label{SPdef}\S(P):=\{\supp(F)\mid F\in\F(P)\}.\end{equation}

This can also be written as
\begin{align}\notag\S(P)&=\{\supp(X)\mid X\subset P\}\\
\notag&=\{\supp(x)\mid x\in P\}\cup\{\emptyset\}\\
\notag&=\{\supp(U)\mid U\subset\vert(P)\}\\
\label{SP}&\textstyle=\Bigl\{S\subset N\Bigm|S\subset'\bigcup_{\substack{u\in\vert(P)\\\supp(u)\subset S}}\supp(u)\Bigr\}.\end{align}

It follows from~\eqref{suppiff2} that the mapping from each $F\in\F(P)$ to $\supp(F)\in\S(P)$
is bijective, with an inverse which maps each $S\in\S(P)$ to $\{x\in P\mid\supp(x)\subset S\}=
\{x\in P\mid x_i=0\text{ for all }i\in N\setminus S\}$.
Furthermore, it follows from~\eqref{suppiff} that, for any~$F_1,F_2\in\F(P)$, $F_1\subset F_2$ if and only if $\supp(F_1)\subset\supp(F_2)$.
Thus, $\S(P)$ is a lattice, partially ordered by set inclusion, which is isomorphic to the face lattice~$\F(P)$.
Also, for any $\mathcal{T}\subset\S(P)$, the meet and join in this lattice are
\begin{align}\label{infT}\inf(\mathcal{T})&=\bigcup_{\substack{S\in\S(P)\\S\subset\cap_{S'\in\mathcal{T}}S'}}\!\!S\;
=\bigcup_{\substack{u\in\vert(P)\\\supp(u)\subset\cap_{S'\in\mathcal{T}}S'}}\!\supp(u),\\
\sup(\mathcal{T})&=\bigcup_{S\in\mathcal{T}}S.\end{align}
In~\eqref{infT}, $\bigcap_{S'\in\mathcal{\emptyset}}S'$ can be taken as $N$ or as $\supp(P)$, giving $\inf(\emptyset)=\supp(P)$.

Combining the isomorphisms between $\F(P)$ and $\V(P)$, and between $\F(P)$ and $\S(P)$, gives
an isomorphism between $\V(P)$ and $\S(P)$ in which
each $U\in\V(P)$ is mapped to $\supp(U)\in\S(P)$ and, inversely, in which
each $S\in\S(P)$ is mapped to $\{u\in\vert(P)\mid\supp(u)\subset S\}\in\V(P)$.

The isomorphisms among the lattices $\F(P)$, $\V(P)$ and $\S(P)$ are summarized in Figure~\ref{iso}.

\begin{figure}[h]\centering
\psset{unit=9mm}\pspicture(-0.5,-0.9)(12,5.3)
\psline[linewidth=0.5pt,arrows=->,arrowsize=5pt](0,0)(5,5)\psline[linewidth=0.5pt,arrows=<-,arrowsize=5pt](0.5,0)(5.2,4.7)
\psline[linewidth=0.5pt,arrows=->,arrowsize=5pt](6.3,4.7)(11,0)\psline[linewidth=0.5pt,arrows=<-,arrowsize=5pt](6.5,5)(11.5,0)
\psline[linewidth=0.5pt,arrows=->,arrowsize=5pt](1,-0.25)(10.5,-0.25)\psline[linewidth=0.5pt,arrows=<-,arrowsize=5pt](1,-0.6)(10.5,-0.6)
\rput(11.3,-0.45){$\S(P)$}\rput(0.3,-0.45){$\V(P)$}\rput(5.75,5){$\F(P)$}
\rput[l](8.85,3){$\scriptstyle S\mapsto\{x\in P\mid\supp(x)\subset S\}$} \rput[r](8.8,2){$\scriptstyle F\mapsto\supp(F)$}
\rput[l](2.8,2){$\scriptstyle F\mapsto\vert(F)$} \rput[r](2.7,3){$\scriptstyle U\mapsto\conv(U)$} \rput[b](5.75,-0.1){$\scriptstyle U\mapsto\supp(U)$}
\rput[t](5.75,-0.75){$\scriptstyle S\mapsto\{u\in\vert(P)\mid\supp(u)\subset S\}$}\endpspicture
\caption{Isomorphisms among the lattices $\F(P)$, $\V(P)$ and $\S(P)$}\label{iso}\end{figure}

Finally, note that the results of this section can easily be modified so as to be valid for the more general case in which
the polytope $P$ has the form $P=\bigcap_{i\in M}K_i$ instead of~\eqref{polytope},
and the supports of any $X\subset\R^N$ and any $x\in\R^N$ are defined to be
$\supp(X)=\{i\in M\mid X\not\subset H_i\}$ and $\supp(x)=\supp(\{x\})=\{i\in M\mid x\notin H_i\}$ instead of~\eqref{supp}--\eqref{suppx},
where~$M$ is a finite set and, for each $i\in M$, $K_i$ is a given closed halfspace in $\R^N$ whose bounding hyperplane is~$H_i$, with
it being assumed that $\bigcap_{i\in M}H_i=\emptyset$.  In particular, the modifications are that~\eqref{faces} becomes
$\F(P)=\bigl\{\bigl(\bigcap_{i\in M'}K_i\bigr)\,\bigcap\,\bigl(\bigcap_{i\in M\setminus M'}H_i\bigr)\bigm|M'\subset M\bigr\}$
(with an intersection over~$\emptyset$ taken to be~$\R^N$), the isomorphism from
$S\in\S(P)$ to $F\in\F(P)$ becomes $F=\{x\in P\mid\supp(x)\subset S\}=
\bigl(\bigcap_{i\in S}K_i\bigr)\,\bigcap\,\bigl(\bigcap_{i\in M\setminus S}H_i\bigr)$,
and~$N$ in~\eqref{SP} is replaced by~$M$, with all other statements in this section remaining unchanged.

\section{Results for the faces, dimension, vertices and edges of $\mathcal{P}(G,b)$}\label{facesPGb}
In this section, graphs associated with subsets and elements of $\P(G,b)$ are defined,
and some of the main results of this paper, concerning the faces, dimension, vertices and edges of $\P(G,b)$, are obtained
by combining general results for graphs from Section~\ref{inc} with general results for polytopes from Section~\ref{poly}.

Using the definitions~\eqref{supp}--\eqref{suppx} of supports (with $N$ taken to be $E$), define the graph of any $X\subset\P(G,b)$ as
\begin{align}\notag\gr(X)&:=\text{the spanning subgraph of }G\text{ with edge set }\supp(X)\\
\notag&\;=\text{the spanning subgraph of }G\text{ with edge set}\\[-1mm]
\label{grX}&\qquad\qquad\qquad\qquad\qquad\{e\in E\mid\text{there exists }x\in X\mbox{ with }x_e>0\},\end{align}
and define the graph of any $x\in\P(G,b)$ as
\begin{align}\notag\gr(x)&:=\gr(\{x\})\\
\notag&\;=\text{the spanning subgraph of }G\text{ with edge set }\supp(x)\\
\label{grx}&\;=\text{the spanning subgraph of }G\text{ with edge set }\{e\in E\mid x_e>0\}.\end{align}

It will also be useful, for Section~\ref{facelattPGb}, to denote the set
of graphs of subsets of $\P(G,b)$ as~$\G(G,b)$, i.e.,
\begin{equation}\label{GGbdef}\G(G,b):=\{\gr(X)\mid X\subset\P(G,b)\},\end{equation}
and to refer to the graphs in this set as the graphs of $\P(G,b)$.
(Note that any element of~$\G(G,b)$ is \emph{a} graph of~$\P(G,b)$, whereas the particular element $\gr(\P(G,b))$ is \emph{the}
graph of~$\P(G,b)$.)

The standard definitions of graph union, intersection and containment will be applied to
spanning subgraphs of $G$ in Section~\ref{facelattPGb}, and, to some extent, in this section.
In particular, for any set $\H$ of spanning subgraphs of $G$, the union and intersection of the graphs in $\H$ are given by
\begin{align}\label{graphint}\textstyle\bigcap_{H\in\H}H&=\textstyle\text{the spanning subgraph of }G\text{ with edge set }\bigcap_{H\in\H}E_H,\\
\label{graphunion}\textstyle\bigcup_{H\in\H}H&=\textstyle\text{the spanning subgraph of }G\text{ with edge set }\bigcup_{H\in\H}E_H,\end{align}
where $E_H$ denotes the edge set of $H$, and $\bigcap_{H\in\emptyset}E_H$ is taken to be $E$ (so that $\bigcap_{H\in\emptyset}H=G$).
Similarly, for spanning subgraphs $H_1$ and $H_2$ of $G$ with edge sets $E_{H_1}$ and $E_{H_2}$, graph containment is given by
\begin{equation}\label{subgraph}H_1\subset H_2\text{ if and only if }E_{H_1}\subset E_{H_2}.\end{equation}

For example, it follows from~\eqref{suppXsuppx} and~\eqref{graphunion} that, for any $X\subset\P(G,b)$,
\begin{equation}\label{grXgrx}\textstyle\gr(X)=\bigcup_{x\in X}\gr(x).\end{equation}

The main results of this section will now be obtained.  These include results
which provide formulae for the dimensions of $\P(G,b)$ and its faces (Theorem~\ref{facedim},
and Corollaries~\ref{dim},~\ref{dimcor1} and~\ref{dimcor2}), characterize the elements of $\P(G,b)$ which are vertices of $\P(G,b)$
(Theorem~\ref{vertth} and Corollary~\ref{bipvertcor}), give an explicit formula for a vertex of $\P(G,b)$ in terms of its
graph (Theorem~\ref{explvert}), and characterize the pairs of distinct vertices of $\P(G,b)$ which form edges of $\P(G,b)$
(Theorem~\ref{edgeth} and Corollary~\ref{bipedgeth}).  Certain cases of some of these results correspond to previously-known results
for the matrix classes $\N_{\le Z}(R)$, $\N(R)$, $\N_{\le Z}(R,S)$ or~$\N(R,S)$ (using the notation
discussed in Section~\ref{mat}).

\begin{theorem}\label{facedim}
Let $F$ be a nonempty face of $\P(G,b)$. Then:
\begin{list}{(\roman{listnumber})}{\usecounter{listnumber}\setlength{\labelwidth}{6mm}\setlength{\leftmargin}{9mm}\setlength{\itemsep}{1mm}}
\item $F$ is affinely isomorphic to $\P(\gr(F),b)$.
\item The dimension of $F$ is $|\supp(F)|-|V|+B$, where~$B$ is the number of bipartite components of $\gr(F)$.
\end{list}
\end{theorem}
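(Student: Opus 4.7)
The plan is to invoke Proposition~\ref{facelem} with the matrix form~\eqref{IMF} of $\P(G,b)$, and then apply Proposition~\ref{incnulllem} to the relevant subgraph. Specifically, view $\P(G,b)$ as the polytope $P$ of~\eqref{polytope} with $N=E$, $M=V$, $A=I_G$ and $a=b$.

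First I would set $N'=\supp(F)$, which by the definition~\eqref{grX} is exactly the edge set of the spanning subgraph $\gr(F)$ of $G$. Let $A'$ denote the submatrix of $I_G$ obtained by restricting the columns to those indexed by $N'$. The key observation, which is the only graph-theoretic step needed, is that $A'$ coincides with the incidence matrix $I_{\gr(F)}$; this is immediate from the definition of the incidence matrix of $\gr(F)$ together with the fact that $\gr(F)$ has vertex set $V$ and edge set $\supp(F)$. Consequently the polytope $F'$ appearing in~\eqref{facelem1} of Proposition~\ref{facelem} is
\[ F' = \bigl\{y\in\R^{\supp(F)}\bigm| y_e\ge0\text{ for each }e\in\supp(F),\ I_{\gr(F)}\,y=b\bigr\}, \]
which, by~\eqref{IMF} applied to the graph $\gr(F)$, is precisely $\P(\gr(F),b)$. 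Part~(i) of the theorem then follows directly from part~(i) of Proposition~\ref{facelem}.

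For part~(ii), part~(ii) of Proposition~\ref{facelem} gives $\dim(F)=\nullity(A')=\nullity(I_{\gr(F)})$. Applying Proposition~\ref{incnulllem} to $\gr(F)$ (whose edge set has cardinality $|\supp(F)|$ and whose vertex set is $V$) then yields
\[ \dim(F) = |\supp(F)| - |V| + B, \]
where $B$ is the number of bipartite components of $\gr(F)$, as required.

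There is no serious obstacle here: the proof is essentially a bookkeeping step which identifies the column-restricted incidence matrix of $G$ with the incidence matrix of the spanning subgraph $\gr(F)$, and then chains together the general polytope result of Proposition~\ref{facelem} with the nullity formula of Proposition~\ref{incnulllem}. The only point worth being careful about is that $\gr(F)$ is a \emph{spanning} subgraph of $G$, so its vertex set remains $V$, which is what makes the rows of $A'$ and of $I_{\gr(F)}$ match up and guarantees that the constant term $|V|$ in the dimension formula is the same $|V|$ appearing in the statement.
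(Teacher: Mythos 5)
Your proof is correct and follows exactly the same route as the paper's: it invokes Proposition~\ref{facelem} for $\P(G,b)$ in the form~\eqref{IMF} (with $N=E$, $M=V$, $A=I_G$, $a=b$, so that $A'=I_{\gr(F)}$ and $F'=\P(\gr(F),b)$), and then applies Proposition~\ref{incnulllem} to $\gr(F)$ for the dimension formula. The observation that $\gr(F)$ is a spanning subgraph, so its vertex set is all of $V$, is indeed the only point requiring care, and you have handled it correctly.
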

\begin{proof}
Using~\eqref{IMF},~\eqref{grX}, and both parts of Proposition~\ref{facelem}
(taking $P$, $M$, $N$, $A$ and $a$ to be $\P(G,b)$,~$V$,~$E$,~$I_G$ and~$b$ respectively,
so that $A'=I_{\gr(F)}$), it follows that $F$ is affinely isomorphic to $\P(\gr(F),b)$,
and that $\dim(F)=\nullity(I_{\gr(F)})$.  The expression for $\dim(F)$ in (ii)
is then given by Proposition~\ref{incnulllem} (taking the graph in that proposition to be~$\gr(F)$).
\end{proof}

\begin{corollary}\label{dim}If $\P(G,b)$ is nonempty, then its dimension is $|\supp(\P(G,b))|-|V|+B$,
where~$B$ is the number of bipartite components of $\gr(\P(G,b))$.
\end{corollary}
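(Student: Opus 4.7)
The plan is to apply Theorem~\ref{facedim} directly, specialized to the case in which the face $F$ is taken to be $\P(G,b)$ itself. Since every polytope is (the unique top) face of its own face lattice, and since $\P(G,b)$ is assumed nonempty, $F = \P(G,b)$ satisfies the hypothesis of Theorem~\ref{facedim}. Part~(ii) of that theorem then yields $\dim(\P(G,b)) = |\supp(\P(G,b))| - |V| + B$, where $B$ is the number of bipartite components of $\gr(\P(G,b))$, which is exactly the statement of the corollary.

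There is no substantive obstacle to overcome here; the corollary is simply the specialization of Theorem~\ref{facedim}(ii) to the improper face. The only minor point worth flagging is that this specialization relies on the nonemptiness hypothesis of the corollary, which is needed because Theorem~\ref{facedim} is stated only for nonempty faces. One could alternatively unpack the argument by observing that $\P(G,b)$ is given as $\{x \in \R^E \mid x_e \ge 0 \text{ for each } e \in E,\ I_G\,x = b\}$ via~\eqref{IMF}, so that Proposition~\ref{facelem} applied with $P = \P(G,b)$, $F = \P(G,b)$, $N' = \supp(\P(G,b))$, and $A' = I_{\gr(\P(G,b))}$ gives $\dim(\P(G,b)) = \nullity(I_{\gr(\P(G,b))})$, and then Proposition~\ref{incnulllem} applied to the graph $\gr(\P(G,b))$ evaluates this nullity as $|\supp(\P(G,b))| - |V| + B$; but this is precisely the content that Theorem~\ref{facedim} has already packaged.
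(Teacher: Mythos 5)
Your proposal is correct and follows exactly the paper's own proof, which also obtains the corollary by applying part (ii) of Theorem~\ref{facedim} with $F$ taken to be $\P(G,b)$ itself. The additional unpacking via Proposition~\ref{facelem} and Proposition~\ref{incnulllem} is accurate but, as you note, is precisely what Theorem~\ref{facedim} already packages.
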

A case of this result applied to $\N_{\le Z}(R,S)$ is given by, for example, Brualdi~\cite[Lem.\ 8.4.3(ii)]{Bru06},
and a case applied to $\N(R,S)$ is given by, for example,
Brualdi~\cite[Thm.~8.1.1]{Bru06}, Klee and Witzgall~\cite[Thm.~1]{KleWit68},
Schrijver~\cite[Thm.~21.16]{Sch03}, and
Yemelichev, Kovalev and Kravtsov~\cite[Ch.~6, Prop.~1.1]{YemKovKra84}.
\begin{proof}This result follows from (ii) of Theorem~\ref{facedim}, by taking $F$ to be~$\P(G,b)$.\end{proof}
Note that, assuming the validity of (i) of Theorem~\ref{facedim}, Corollary~\ref{dim} and (ii) of Theorem~\ref{facedim} are
equivalent, since (ii) of Theorem~\ref{facedim} could be obtained from Corollary~\ref{dim} by taking $G$ in
that corollary to be $\gr(F)$.

\begin{corollary}\label{dimcor1}
If $\P(G,b)$ contains a strictly positive element, then its dimension is $|E|-|V|+B$, where~$B$ is the number of
bipartite components of $G$.
\end{corollary}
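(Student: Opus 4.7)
The plan is to deduce this directly from Corollary~\ref{dim}, the only extra ingredient being that a strictly positive element forces the support of $\P(G,b)$ to be all of $E$, and hence the graph of $\P(G,b)$ to be $G$ itself.

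First I would note that the hypothesis that $\P(G,b)$ contains a strictly positive element in particular implies $\P(G,b) \ne \emptyset$, so Corollary~\ref{dim} applies and gives $\dim(\P(G,b)) = |\supp(\P(G,b))| - |V| + B'$, where $B'$ is the number of bipartite components of $\gr(\P(G,b))$.

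Next, let $x$ be a strictly positive element of $\P(G,b)$, so $x_e > 0$ for every $e \in E$. Then by the definition of $\supp(x)$ in~\eqref{suppx} we have $\supp(x) = E$, and since $\supp(x) \subset \supp(\P(G,b)) \subset E$ by~\eqref{suppimpl} (applied to $\{x\} \subset \P(G,b)$) together with the obvious containment $\supp(\P(G,b)) \subset E$, we conclude $\supp(\P(G,b)) = E$. Consequently, by the definition~\eqref{grX} of the graph of a subset of $\P(G,b)$, $\gr(\P(G,b))$ is the spanning subgraph of $G$ with edge set $E$, i.e., $\gr(\P(G,b)) = G$.

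Substituting $|\supp(\P(G,b))| = |E|$ and $B' = B$ (the number of bipartite components of $G$) into the formula from Corollary~\ref{dim} yields $\dim(\P(G,b)) = |E| - |V| + B$, as claimed. There is no real obstacle here; the content lies entirely in Corollary~\ref{dim} (and, via the proof of that, in Theorem~\ref{facedim} and Proposition~\ref{incnulllem}). The proof is essentially the single observation that strict positivity of some element forces full support for the whole polytope.
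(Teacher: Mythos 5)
Your proof is correct and follows exactly the paper's route: apply Corollary~\ref{dim} and observe that a strictly positive element forces $\supp(\P(G,b))=E$, hence $\gr(\P(G,b))=G$. You simply spell out the support argument in more detail than the paper does.
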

\begin{proof}This result follows from Corollary~\ref{dim},
and the fact that if $\P(G,b)$ contains a completely positive element, then $\gr(\P(G,b))=G$.
\end{proof}

\begin{corollary}\label{dimcor2}
Let $G$ be bipartite and planar.  Then the dimension of a nonempty face $F$ of $\P(G,b)$ is the number of bounded faces
in a planar embedding of $\gr(F)$.
\end{corollary}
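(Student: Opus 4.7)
The plan is to reduce the corollary to Theorem~\ref{facedim}(ii) and then apply Euler's formula, essentially in the same manner as indicated in the remark following Proposition~\ref{incnulllem}. More concretely, I would first invoke Theorem~\ref{facedim}(ii) to write
\[
\dim(F)=|\supp(F)|-|V|+B,
\]
where $B$ is the number of bipartite components of $\gr(F)$, and observe that by definition $\gr(F)$ is a spanning subgraph of $G$, so its vertex set is $V$ and its edge set has cardinality $|\supp(F)|$.

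Next I would use the hypotheses on $G$ to simplify $B$ and to ensure planarity of $\gr(F)$. Since $G$ is bipartite and $\gr(F)$ is a subgraph of $G$, the graph $\gr(F)$ is bipartite, so every one of its components is bipartite; therefore $B$ coincides with the total number $c$ of connected components of $\gr(F)$. Similarly, since $G$ is planar, the spanning subgraph $\gr(F)$ is planar, and we may fix any planar embedding of it.

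Finally I would apply Euler's formula for planar graphs (valid in the present generality, as noted after Proposition~\ref{incnulllem}, since multiple edges and loops are permitted). For a planar graph with vertex set of size $n$, edge set of size $m$, $c$ connected components, and $f$ faces in some planar embedding, Euler's formula gives $n-m+f=1+c$, so the number of bounded faces is $f-1=m-n+c$. Applying this to $\gr(F)$ yields
\[
\dim(F)=|\supp(F)|-|V|+c\;=\;(\text{number of bounded faces in a planar embedding of }\gr(F)),
\]
which is the desired equality.

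I do not expect any substantial obstacle here: the content is a direct combination of Theorem~\ref{facedim}(ii), the trivial observation that subgraphs of bipartite planar graphs are bipartite and planar, and Euler's formula. The only minor point to keep straight is that one must count \emph{bounded} faces (i.e.\ exclude the outer face) and that Euler's formula for a planar graph with $c$ components carries the $1+c$ on the right-hand side rather than $2$.
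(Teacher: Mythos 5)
Your proposal is correct and follows exactly the paper's route: Theorem~\ref{facedim}(ii) combined with Euler's formula for (possibly multi-component, multi-edged) planar graphs, using bipartiteness of $G$ to identify $B$ with the number of components of $\gr(F)$. The paper's own proof is just a one-line citation of these two ingredients, so your more detailed write-out is simply an expansion of the same argument.
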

\begin{proof}This result follows from (ii) of Theorem~\ref{facedim}, and Euler's formula for planar graphs
(which remains valid for graphs with multiple edges).\end{proof}

\begin{corollary}\label{vertsizecor}
Let $u$ be a vertex of $\P(G,b)$.  Then $|\supp(u)|\le|V|-B$, where $B$ is the number of bipartite components of $G$.\end{corollary}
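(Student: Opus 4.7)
The plan is to apply a general polytope fact from Section~\ref{poly} together with a rank formula for incidence matrices from Section~\ref{inc}, with the matrix form~\eqref{IMF} of $\P(G,b)$ providing the bridge between them.

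First, I would invoke Corollary~\ref{vertcor} in the context of $\P(G,b)$. That corollary says that if $u$ is a vertex of a polytope of the form~\eqref{polytope}, then $|\supp(u)| \le \rank(A)$. Using~\eqref{IMF}, $\P(G,b)$ has this form with $N = E$, $M = V$, $A = I_G$, and $a = b$, so for any vertex $u$ of $\P(G,b)$ we immediately get
\[|\supp(u)| \le \rank(I_G).\]

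Next, I would substitute the value of $\rank(I_G)$. Proposition~\ref{incnulllem} computes $\nullity(I_G) = |E| - |V| + B$, and since $I_G$ has $|E|$ columns, the identity $\rank(I_G) + \nullity(I_G) = |E|$ (noted explicitly in~\eqref{incnulllem2}) gives $\rank(I_G) = |V| - B$, where $B$ is the number of bipartite components of $G$. Combining this with the inequality above yields $|\supp(u)| \le |V| - B$, as required.

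There is essentially no obstacle here; the result is a direct composition of Corollary~\ref{vertcor} and Proposition~\ref{incnulllem}, and the only substantive content is recognizing that the matrix $A$ in the general polytope setup is precisely the incidence matrix $I_G$ under the identification~\eqref{IMF}. The combinatorial meaning, that a vertex of $\P(G,b)$ has at most $|V| - B$ positive coordinates, is thus an immediate consequence of the two already-established facts.
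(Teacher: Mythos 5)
Your proof is correct and coincides with one of the paper's own proofs: the paper explicitly gives, as an alternative argument, exactly this combination of Corollary~\ref{vertcor} with the rank formula~\eqref{incnulllem2} under the identification~\eqref{IMF}. (The paper's primary proof instead applies (ii) of Theorem~\ref{facedim} to the zero-dimensional face $\{u\}$ and uses that $\gr(u)$ has at least as many bipartite components as $G$, but your route is equally valid and is stated there verbatim.)
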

A case of this result applied to $\N(R)$ is given by, for example, Brualdi~\cite[Cor.~8.2.2]{Bru06},
Converse and Katz~\cite[Lem.]{ConKat75}, and Lewin~\cite[Cor.~2]{Lew77},
and a case applied to~$\N(R,S)$ is given by, for example,
Brualdi~\cite[Thm.~8.1.3]{Bru06}, and Klee and Witzgall~\cite[Cor.~3]{KleWit68}.
\begin{proof}Using (ii) of Theorem~\ref{facedim}, and the fact that $\{u\}$ is a face $\P(G,b)$ with dimension~0,
it follows that $0=|\supp(u)|-|V|+B'$, where $B'$ is the number of bipartite components of $\gr(u)$.  The result now follows from
the fact that $B'\ge B$ (since $\gr(u)$ is a spanning subgraph of $G$).

Alternatively, this result follows from~\eqref{incnulllem2} and Corollary~\ref{vertcor}
(again taking $P$, $M$, $N$, $A$ and $a$ to be $\P(G,b)$,~$V$,~$E$,~$I_G$ and~$b$ respectively).
\end{proof}

\begin{theorem}\label{vertth}
An element $u$ of $\P(G,b)$ is a vertex of $\P(G,b)$ if and only if
each component of the graph of $u$ either is acyclic or else contains exactly one cycle with that cycle having odd length.
\end{theorem}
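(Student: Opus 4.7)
The plan is to combine, in essentially one step each, the general polytope result Proposition~\ref{vertlem} with the graph-theoretic result Proposition~\ref{zeronullity}, using the matrix form~\eqref{IMF} of $\P(G,b)$. Since the paper has already done the work of separating the graph theory from the polytope theory, the proof should reduce to a short chain of equivalences.

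First, I would apply Proposition~\ref{vertlem} with $P=\P(G,b)$, $N=E$, $M=V$, $A=I_G$ and $a=b$ (this is legitimate by~\eqref{IMF}). The conclusion is that $u\in\P(G,b)$ is a vertex of $\P(G,b)$ if and only if $\nullity(A')=0$, where $A'$ is the submatrix of $I_G$ obtained by restricting the columns to those indexed by $\supp(u)$.

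Next, I would identify this submatrix with an incidence matrix of a graph. By the definition~\eqref{grx}, the graph $\gr(u)$ is the spanning subgraph of $G$ whose edge set is exactly $\supp(u)$; its incidence matrix $I_{\gr(u)}$ therefore has rows indexed by $V$ and columns indexed by $\supp(u)$, and its $(v,e)$-entry records incidence of $v$ with $e$ in $G$. Hence $A'=I_{\gr(u)}$, so the vertex condition becomes $\nullity(I_{\gr(u)})=0$.

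Finally, I would invoke Proposition~\ref{zeronullity} applied to the graph $\gr(u)$: this nullity vanishes if and only if each component of $\gr(u)$ either is acyclic or else contains exactly one cycle with that cycle having odd length. Chaining the three equivalences gives the theorem. There is no real obstacle here; the only minor point to check is that isolated vertices of $\gr(u)$ cause no trouble, but they contribute zero rows to $I_{\gr(u)}$ and are counted as trivial acyclic components, which is consistent with the statement of Proposition~\ref{zeronullity}.
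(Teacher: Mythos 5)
Your proposal is correct and follows exactly the paper's own proof: apply Proposition~\ref{vertlem} via the matrix form~\eqref{IMF} to reduce the vertex condition to $\nullity(I_{\gr(u)})=0$, then invoke Proposition~\ref{zeronullity} for the graph $\gr(u)$. The remark about isolated vertices is a harmless extra check that the paper leaves implicit.
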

Note that in this theorem, as in Proposition~\ref{zeronullity}, the choice of conditions for the components of~$G$ applies independently to each component.
As also indicated after Proposition~\ref{zeronullity}, the condition of this theorem can be restated as the condition
that~$G$ has no even-length cycles and no component containing more than one odd-length cycle.

A case of this theorem applied to $\N_{\le Z}(R)$ is given by, for example, Brualdi~\cite[Thm.\ 8.2.6]{Bru06},
and a case applied to $\N(R)$ is given by, for example,
Brualdi~\cite[Thm.~3.1]{Bru76},~\cite[Thm.~8.2.1]{Bru06}, and Lewin~\cite[Thm.~2]{Lew77}.
\begin{proof}
Using~\eqref{IMF},~\eqref{grx} and Proposition~\ref{vertlem}
(again taking $P$, $M$, $N$, $A$ and $a$ to be $\P(G,b)$, $V$, $E$, $I_G$ and~$b$ respectively,
so that $A'=I_{\gr(u)}$), it follows that $u$ is a vertex of $\P(G,b)$ if and only
if $\nullity(I_{\gr(u)})=0$.  The condition on $\gr(u)$ is then given by Proposition~\ref{zeronullity}
(taking the graph in that proposition to be $\gr(u)$).
\end{proof}
Theorem~\ref{vertth} can also be proved more directly, using arguments from the alternative proof of
Proposition~\ref{zeronullity}.  This will now be outlined briefly.\\[2.5mm]
\emph{Alternative proof of Theorem~\ref{vertth}.}  In this proof, the fact is used that an
element~$u$ of a polytope $P\subset\R^N$ (for a finite set~$N$) is a vertex of $P$ if and only if there does not exist any
nonzero $x\in\R^N$ such that $u-x\in P$ and $u+x\in P$.

First, consider $u\in\P(G,b)$ and $x\in\R^E$ such that $u\pm x\in\P(G,b)$, and
let each component of~$\gr(u)$ either be acyclic or else contain exactly one
cycle with that cycle having odd length. Then $I_G\,u=I_G(u\pm x)=b$, and so $I_G\,x=0$.  Also,
$u_e\pm x_e\ge0$ for each $e\in E$, and so if $u_e=0$ then $x_e=0$, i.e., $\supp(x)\subset\supp(u)$.
Therefore, $I_{\gr(u)}\,y=0$, where $y\in\R^{\supp(u)}$ is given by
$y_e=x_e$ for each $e\in\supp(u)$.
Using the same argument as in the first part of the alternative proof of Proposition~\ref{zeronullity},
it follows that $y=0$.  Hence, $x=0$ and $u$ is a vertex of $\P(G,b)$.

Now, conversely, consider $u\in\P(G,b)$ and let it not be the case that each component
of~$\gr(u)$ is acyclic or contains exactly one cycle with that cycle having odd length.
Then using the same argument as in the second part of the alternative proof of Proposition~\ref{zeronullity}, there exists a nonzero
$y\in\R^{\supp(u)}$ which satisfies $I_{\gr(u)}\,y=0$.  Now define $x\in\R^E$ by $x_e=y_e$ for each $e\in\supp(u)$, and
$x_e=0$ for each $e\in E\setminus\supp(u)$.  Then $x\ne0$, $I_G\,x=0$ and $\supp(x)\subset\supp(u)$.  Therefore, an
$\epsilon>0$ can be chosen such that $u_e\pm\epsilon x_e\ge0$ for all $e\in E$.  (In particular,
$0<\epsilon\le\min(\{u_e/x_e\mid e\in E,\;x_e>0\}\cup\{-u_e/x_e\mid e\in E,\;x_e<0\})$.)  Hence, $u\pm\epsilon x\in\P(G,b)$,
and $u$ is not a vertex of $\P(G,b)$.\hspace{\fill}$\Box$
\begin{corollary}\label{bipvertcor}
Let $G$ be bipartite.  Then an element $u$ of $\P(G,b)$ is a vertex of $\P(G,b)$ if and only if the graph of $u$
is a forest.\end{corollary}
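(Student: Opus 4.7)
The plan is to deduce this directly from Theorem~\ref{vertth}, exploiting the fact that the graph of $u$ is a spanning subgraph of $G$ and so inherits bipartiteness.

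First, I would invoke Theorem~\ref{vertth}: an element $u\in\P(G,b)$ is a vertex if and only if each component of $\gr(u)$ either is acyclic or else contains exactly one cycle with that cycle having odd length. Since $G$ is bipartite and $\gr(u)$ is by definition a spanning subgraph of $G$, the graph $\gr(u)$ is also bipartite, and in particular (using the standard fact recalled in Section~\ref{convnot} that a graph is bipartite iff it has no odd-length cycles) $\gr(u)$ contains no odd-length cycle. Therefore the second alternative in the characterization from Theorem~\ref{vertth} is automatically excluded for every component of $\gr(u)$.

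Consequently the condition from Theorem~\ref{vertth} collapses to the statement that every component of $\gr(u)$ is acyclic, i.e., that $\gr(u)$ is a forest. Conversely, if $\gr(u)$ is a forest, then each of its components is acyclic, which certainly satisfies the condition of Theorem~\ref{vertth}, so $u$ is a vertex of $\P(G,b)$.

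There is essentially no obstacle here; the only point requiring any care is to note explicitly that $\gr(u)\subset G$ forces $\gr(u)$ to be bipartite, so that no odd-length cycle can appear in any component. Once this is observed, the corollary is an immediate specialization of Theorem~\ref{vertth} to the bipartite setting.
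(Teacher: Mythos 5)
Your proposal is correct and is essentially the paper's own proof: both deduce the corollary from Theorem~\ref{vertth} by noting that a (spanning subgraph of a) bipartite graph has no odd-length cycles, so the second alternative in that theorem's condition is vacuous and the condition reduces to $\gr(u)$ being a forest. You simply spell out this reduction in more detail than the paper does.
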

A case of this result applied to $\N_{\le Z}(R,S)$ is given by, for example, Brualdi \cite[Thm.\ 8.1.10]{Bru06},
and a case applied to $\N(R,S)$ is given by, for example,
Brualdi~\cite[Thm.~8.1.2]{Bru06}, Klee and Witzgall~\cite[Thm.~4]{KleWit68}, and Schrijver~\cite[Thm.~21.15]{Sch03}.
\begin{proof}This result follows from Theorem~\ref{vertth}, using the fact that a bipartite graph does not contain
any odd-length cycles.\end{proof}

\begin{theorem}\label{explvert}
Let $H$ be the graph of a vertex $u$ of $\P(G,b)$.
Then $u$ is the only element of $\P(G,b)$ whose graph is $H$, and it is given explicitly by
\begin{equation}\label{ue}
u_e=\begin{cases}\displaystyle k_e\sum_{v\in V_{H\setminus e}(t_e)}(-1)^{d_{H\setminus e}(v,t_e)}\,b_v,&e\text{ is an edge of }H,\\
0,&\text{otherwise},\end{cases}
\end{equation}
for each $e\in E$, where
\begin{align}k_e&=\begin{cases}\frac{1}{2},&e\text{ is an edge of a nonloop cycle of $H$,}\\
1,&\text{otherwise,}\end{cases}\\
\label{teH}t_e&=\begin{cases}\mbox{the endpoint of $e$ furthest from }L,&e\text{ is an edge of a component}\\[-1.5mm]
&\text{of }H\text{ that contains a single}\\[-1.5mm]
&\text{cycle }L,\text{ but }e\text{ is not in }L,\\
\text{an arbitrarily-chosen endpoint of }e,&\text{otherwise,}\end{cases}\end{align}
$H\setminus e$ is the graph obtained by deleting edge~$e$ from $H$,
$V_{H\setminus e}(t_e)$ is the vertex set of the component of $H\setminus e$ which contains $t_e$,
and $d_{H\setminus e}(v,t_e)$ is the length of the (necessarily unique) path between~$v$ and~$t_e$ in $H\setminus e$.
\end{theorem}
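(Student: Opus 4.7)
The plan is to derive this theorem as a direct consequence of Theorem~\ref{vertth} combined with Proposition~\ref{uniquesollem}, with Proposition~\ref{bipsollem} playing an auxiliary role. Write $E_H$ for the edge set of $H=\gr(u)$. Because $u$ is a vertex of $\P(G,b)$, Theorem~\ref{vertth} asserts that each component of $H$ either is acyclic or else contains exactly one cycle, with that cycle having odd length---which is precisely the structural condition required in the hypothesis of Proposition~\ref{uniquesollem}(i), applied with $H$ in place of $G$ and $b$ in place of $a$.

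Before invoking that proposition, I would verify the companion compatibility condition~\eqref{bipsollemeq1} for $H$ and $b$. This comes for free: since $u_e=0$ for every $e\in E\setminus E_H$, the restriction $u|_{E_H}\in\R^{E_H}$ satisfies $I_H\,u|_{E_H}=I_G\,u=b$, and the mere existence of such a solution to $I_H\,x=b$ forces~\eqref{bipsollemeq1} by the necessity direction of Proposition~\ref{bipsollem}.

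Uniqueness of $u$ then follows at once. If $u'\in\P(G,b)$ satisfies $\gr(u')=H$, then $u'_e=0$ for each $e\in E\setminus E_H$, so $u'|_{E_H}$ is a solution of $I_H\,x=b$; by Proposition~\ref{uniquesollem}(i) that solution is unique, and since $u|_{E_H}$ is also such a solution, $u'=u$. Finally, the explicit formula~\eqref{ue} for the entries with $e\in E_H$ is exactly formula~\eqref{xe} of Proposition~\ref{uniquesollem}(ii) applied to $H$ and $b$, while the value $u_e=0$ for $e\in E\setminus E_H$ is built into the definition of $\gr(u)$.

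No genuine obstacle arises; the only subtlety requiring care is the shift from the full incidence matrix $I_G$ to the restricted incidence matrix $I_H$, since it is the latter (and not $I_G$ in general) that has nullity zero under the vertex hypothesis, and hence the latter to which Proposition~\ref{uniquesollem} must be applied.
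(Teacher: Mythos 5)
Your proposal is correct and follows essentially the same route as the paper's own proof: invoke Theorem~\ref{vertth} to verify the structural hypothesis on $H$, use the necessity direction of Proposition~\ref{bipsollem} (via the existence of the solution $u|_{E_H}$ of $I_H\,x=b$) to obtain condition~\eqref{bipsollemeq1}, and then apply both parts of Proposition~\ref{uniquesollem} to $H$ and $b$ for uniqueness and the explicit formula. The point you flag---that it is $I_H$ rather than $I_G$ to which Proposition~\ref{uniquesollem} applies---is exactly the care the paper takes as well.
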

The reasons for the uniqueness of the path between~$v$ and~$t_e$ in~$H\setminus e$, and for the
independence of the RHS of~\eqref{ue} on any choices of $t_e$ in~\eqref{teH}, will be given in the following proof.

Note also that the fact that if $H$ is the graph of a vertex $u$ of $\P(G,b)$,
then $u$ is the only element of $\P(G,b)$ with graph $H$ will also be given as part of Theorem~\ref{vertth2}.
\begin{proof}
Denote the edge set of $H$ as $E'$ (i.e., $E'=\supp(u)$),
and define $u'\in\R^{E'}$ by $u'_e=u_e$ for each $e\in E'$.
It can be seen that $I_H\,u'=b$, so it follows from Proposition~\ref{bipsollem} (taking~$G$ and $a$ in that proposition to be $H$ and $b$)
that $\sum_{v\in U_C}b_v=\sum_{v\in W_C}b_v$ for each bipartite component~$C$ of~$H$, where
$(U_C,W_C)$ is a bipartition for~$C$.
Also, since $u$ is a vertex of $\P(G,b)$, it follows from Theorem~\ref{vertth} that each
component of $H=\gr(u)$ either is acyclic or else contains exactly one cycle with that cycle having odd length.
It now follows from both parts of Proposition~\ref{uniquesollem} (taking~$G$ and $a$ in that proposition also to be $H$ and $b$), that
$u'$ is the only vector in $\R^{E'}$ with $I_H\,u'=b$, and that it is given explicitly by the RHS of~\eqref{xe} (which matches the
first case on the RHS of~\eqref{ue}).
Furthermore, as discussed after the statement of Proposition~\ref{uniquesollem},
the path between~$v$ and~$t_e$ in~$H\setminus e$ is unique, and the expression for~$u'$ is
independent of any choices of $t_e$ in~\eqref{teH}.
Therefore, $u$ is given by~\eqref{ue}, since
$u_e=u'_e$ for each $e\in E'$ and $u_e=0$ for each $e\in E\setminus E'$.
It can also be seen that the uniqueness of $u'$ as a vector in $\R^{E'}$ with $I_H\,u'=b$ implies
that $u$ is the only vector in~$\R^E$ with $I_G\,u=b$ and support $E'$, and hence that $u$ is the only element of
$\P(G,b)$ with graph~$H$.
\end{proof}

\begin{theorem}\label{edgeth}
Let $u$ and $w$ be distinct vertices of $\P(G,b)$.
Then~$u$ and~$w$ are the vertices of an edge of $\P(G,b)$ if and only if
$\gr(u)\cup\gr(w)$ has a component~$C$ such that
each component of~$\gr(u)\cup\gr(w)$ other than $C$ either is acyclic or else contains exactly one cycle with that cycle having odd length,
while the cycle content of $C$ is one of the following:
\begin{itemize}
\item exactly one cycle, with that cycle having even length, or
\item exactly two cycles, with at least one of those cycles having odd length, or
\item exactly one even-length and exactly two odd-length cycles, with any two of those cycles sharing at least one edge.
\end{itemize}
\end{theorem}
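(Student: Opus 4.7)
The plan is to prove Theorem~\ref{edgeth} by the same two-step strategy used in the proofs of Theorems~\ref{facedim} and~\ref{vertth}: first apply a general polytope result from Section~\ref{poly} to reduce the edge condition to a nullity condition on the incidence matrix of a suitable spanning subgraph of~$G$, and then apply a general graph-theoretic result from Section~\ref{inc} to translate that nullity condition into the stated structural condition on the cycles of that subgraph.

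More explicitly, I would invoke Proposition~\ref{edgelem} with $P$, $M$, $N$, $A$ and $a$ chosen to be $\P(G,b)$, $V$, $E$, $I_G$ and~$b$ respectively, using the matrix form~\eqref{IMF} of $\P(G,b)$. Under this choice, the column-restricted submatrix $A'$ is precisely the incidence matrix of the spanning subgraph of $G$ with edge set $\supp(\{u,w\})$. By~\eqref{suppunion} we have $\supp(\{u,w\}) = \supp(u) \cup \supp(w)$, and by~\eqref{grx} and~\eqref{graphunion} this is the edge set of $\gr(u) \cup \gr(w)$; hence $A' = I_{\gr(u)\cup\gr(w)}$. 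Therefore Proposition~\ref{edgelem} yields that $u$ and $w$ are the vertices of an edge of $\P(G,b)$ if and only if $\nullity\bigl(I_{\gr(u)\cup\gr(w)}\bigr) = 1$.

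Next I would feed this equivalence into Proposition~\ref{onenullity}, with the graph in that proposition taken to be $\gr(u)\cup\gr(w)$. Proposition~\ref{onenullity} states exactly that the incidence matrix of a graph has nullity~$1$ if and only if the graph has a distinguished component $C$ whose cycle content is one of the three listed types, while every other component is either acyclic or contains exactly one cycle which has odd length. Applying this to $\gr(u)\cup\gr(w)$ gives precisely the condition stated in Theorem~\ref{edgeth}, completing the proof.

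I do not expect any real obstacle: once the identification $A' = I_{\gr(u)\cup\gr(w)}$ is made (which just amounts to unfolding the definitions of $\supp$, $\gr$ and graph union), the theorem is an immediate combination of Propositions~\ref{edgelem} and~\ref{onenullity}. The only small item deserving explicit mention is the verification that $\supp(\{u,w\}) = \supp(u) \cup \supp(w)$, so that the restricted incidence matrix is indeed $I_{\gr(u)\cup\gr(w)}$ rather than some other object; this is a one-line consequence of~\eqref{suppunion}. No case analysis on bipartiteness of $G$ is needed, since the graph theorem already handles all cases uniformly; a bipartite corollary, if desired, follows by specializing to the fact that bipartite graphs contain no odd-length cycles.
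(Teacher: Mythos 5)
Your proposal is correct and is essentially identical to the paper's own proof: the paper likewise combines Proposition~\ref{edgelem} (with $P$, $M$, $N$, $A$, $a$ taken to be $\P(G,b)$, $V$, $E$, $I_G$, $b$, so that $A'=I_{\gr(u)\cup\gr(w)}$) with Proposition~\ref{onenullity} applied to $\gr(u)\cup\gr(w)$. Your explicit remark that $\supp(\{u,w\})=\supp(u)\cup\supp(w)$ matches the note the paper places immediately after the theorem statement.
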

Note that $\gr(u)\cup\gr(w)$ is the graph union~\eqref{graphunion} of $\gr(u)$ and $\gr(w)$,
and that, using~\eqref{grXgrx}, $\gr(u)\cup\gr(w)=\gr(\{u,w\})$.
\begin{proof}
Using~\eqref{IMF} and Proposition~\ref{edgelem}
(again taking $P$, $M$, $N$, $A$ and $a$ to be $\P(G,b)$, $V$, $E$, $I_G$ and~$b$ respectively,
so that $A'=I_{\gr(u)\cup\gr(w)}$), it follows that~$u$ and~$w$ are the vertices of an edge of $\P(G,b)$ if and only
if $\nullity(I_{\gr(u)\cup\gr(w)})=1$.  The condition on $\gr(u)\cup\gr(w)$ is then given by Proposition~\ref{onenullity}
(taking the graph in that proposition to be $\gr(u)\cup\gr(w)$).
\end{proof}

\begin{corollary}\label{bipedgeth}
Let $G$ be bipartite, and let $u$ and $w$ be distinct vertices of $\P(G,b)$. Then~$u$
and~$w$ are the vertices of an edge of $P$ if and only if $\gr(u)\cup\gr(w)$ contains exactly one cycle.
\end{corollary}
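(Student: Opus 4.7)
The plan is to deduce this corollary directly from Theorem~\ref{edgeth} by exploiting the fact that a bipartite graph contains no odd-length cycles, combined with the observation that $\gr(u)\cup\gr(w)$ is a spanning subgraph of $G$ and hence is itself bipartite.

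First, I would observe that since $\gr(u)\cup\gr(w)\subset G$ and $G$ is bipartite, every cycle in $\gr(u)\cup\gr(w)$ has even length. This immediately rules out odd-length cycles throughout the statement of Theorem~\ref{edgeth} when applied to the present setting.

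Next, I would work through the three bullet points of Theorem~\ref{edgeth} describing the cycle content of the distinguished component $C$: the second and third bullets each require at least one odd-length cycle in $C$, so both are impossible in the bipartite setting. Only the first bullet survives, meaning $C$ contains exactly one cycle (necessarily even). For the remaining components of $\gr(u)\cup\gr(w)$, Theorem~\ref{edgeth} requires each to be acyclic or to contain a single odd cycle; again, the odd cycle option is ruled out, so all components other than $C$ must be acyclic. Altogether, the condition of Theorem~\ref{edgeth} reduces in the bipartite case precisely to the statement that $\gr(u)\cup\gr(w)$ contains exactly one cycle.

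Conversely, I would check that if $\gr(u)\cup\gr(w)$ contains exactly one cycle, then letting $C$ be the component containing that cycle, the cycle is necessarily of even length (by bipartiteness), so the first bullet of Theorem~\ref{edgeth} is satisfied by $C$, and every other component of $\gr(u)\cup\gr(w)$ is acyclic, thereby satisfying the remaining hypothesis of Theorem~\ref{edgeth}. Hence $u$ and $w$ are the vertices of an edge of $\P(G,b)$. There is no real obstacle here; the proof is a routine specialization, and the only point worth spelling out carefully is that all subgraphs of a bipartite graph are bipartite, so that the odd-cycle options vanish uniformly from the conditions of Theorem~\ref{edgeth}.
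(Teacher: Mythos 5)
Your proposal is correct and follows exactly the paper's route: the paper also deduces the corollary from Theorem~\ref{edgeth} by noting that a bipartite graph (hence any subgraph of $G$, including $\gr(u)\cup\gr(w)$) contains no odd-length cycles, which eliminates all but the first bullet for the distinguished component and forces every other component to be acyclic. Your write-up simply makes explicit the case analysis that the paper leaves to the reader.
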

A case of this theorem applied to $\N(R,S)$ is given by, for example, Brualdi~\cite[Thm.\ 8.4.6]{Bru06},
Oviedo~\cite[Cor.~1]{Ovi96}, and Yemelichev, Kovalev and Kravtsov~\cite[Ch.~6, Lem.~4.1]{YemKovKra84}.

\begin{proof}This result follows from Theorem~\ref{edgeth}, using the fact that a bipartite graph does not contain any
 odd-length cycles.\end{proof}

\section{Further results for the vertices, edges, faces and graphs of $\mathcal{P}(G,b)$}\label{facelattPGb}
In this section, further results concerning the vertices, edges, faces and graphs of $\P(G,b)$ are obtained.
The previous such results, in Section~\ref{facesPGb}, combined
both general results for graphs, from Section~\ref{inc}, and general results for polytopes, from
Section~\ref{poly}.  However, by contrast, the results of this section depend only on
general results for polytopes, from Section~\ref{facelatt}, together with the simple correspondences,
as given in~\eqref{grX}--\eqref{grx}, between
supports of subsets or elements of~$\P(G,b)$, and graphs of~$\P(G,b)$.

In particular, this section consists of results from Section~\ref{facelatt}, in which the polytope $P$
of~\eqref{polytope} is now taken to be~$\P(G,b)$, in the form~\eqref{IMF}, with
$N$, $M$, $A$ and $a$ in~\eqref{polytope} taken to be~$E$,~$V$,~$I_G$ and~$b$ respectively,
and with unions, intersections or containments of supports of subsets or elements of~$\P(G,b)$
now expressed as unions, intersections or containments (as given in~\eqref{graphint}--\eqref{subgraph})
of graphs of~$\P(G,b)$.

It will be assumed throughout this section that $b$ is nonzero,
so that the application of the results of Section~\ref{facelatt} to~$\P(G,b)$ is valid.

Included among the results of this section are
further characterizations of the elements of~$\P(G,b)$ which are vertices of $\P(G,b)$ (in Theorem~\ref{vertth2}),
and of the pairs of distinct vertices of $\P(G,b)$ which form edges of $\P(G,b)$ (in Corollary~\ref{edgeth2}),
several equivalent conditions for a spanning subgraph of $G$ to be a graph of~$\P(G,b)$
(in Theorem~\ref{faceth}), and a statement that the set of graphs of $\P(G,b)$ forms a lattice which
is isomorphic to the face lattice of~$\P(G,b)$ (in Theorem~\ref{facelattth}).

\begin{proposition}
For any subset $X$ of $\P(G,b)$,
\begin{equation}\label{grXprop}\gr(X)=\gr\Biggl(\bigcap_{\substack{F\in\F(\P(G,b))\\X\subset F}}F\Biggr)
=\gr\Biggl(\bigcap_{\substack{F\in\facets(\P(G,b))\\X\subset F}}F\Biggr).\end{equation}
\end{proposition}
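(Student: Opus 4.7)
The plan is straightforward: the result is simply the graph-theoretic translation of the general support identity~\eqref{suppX} from Section~\ref{facelatt}, applied to the polytope $\P(G,b)$.

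First I would note that, since $b$ is assumed nonzero throughout Section~\ref{facelattPGb}, the form~\eqref{IMF} of $\P(G,b)$ fits the hypotheses of~\eqref{polytope} with $P=\P(G,b)$, $N=E$, $M=V$, $A=I_G$ and $a=b$, and in particular $a\ne0$. Thus the results of Section~\ref{facelatt} apply, and~\eqref{suppX} yields, for any $X\subset\P(G,b)$,
\[\supp(X)=\supp\Biggl(\bigcap_{\substack{F\in\F(\P(G,b))\\X\subset F}}F\Biggr)=\supp\Biggl(\bigcap_{\substack{F\in\facets(\P(G,b))\\X\subset F}}F\Biggr),\]
with the convention (as in Section~\ref{facelatt}) that an intersection over the empty index set is taken to be $\P(G,b)$.

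Then I would invoke the definition~\eqref{grX}, which says that $\gr(Y)$ for any $Y\subset\P(G,b)$ is precisely the spanning subgraph of~$G$ whose edge set is $\supp(Y)$. Since two spanning subgraphs of~$G$ coincide if and only if their edge sets coincide, applying this definition with $Y$ equal to $X$ and to each of the two intersections above converts the displayed equality of supports directly into the equality of spanning subgraphs claimed in~\eqref{grXprop}.

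There is essentially no obstacle here beyond bookkeeping: the only point worth verifying is consistency of the empty-intersection convention, namely that when $X$ lies in no face (respectively facet) of $\P(G,b)$, both sides of~\eqref{grXprop} reduce to $\gr(\P(G,b))$, which matches the convention already adopted in Section~\ref{facelatt}.
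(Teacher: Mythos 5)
Your proof is correct and is essentially identical to the paper's, which likewise disposes of the proposition in one line by applying~\eqref{suppX} to $\P(G,b)$ and using the definition~\eqref{grX} of the graph as the spanning subgraph with edge set equal to the support. The extra bookkeeping you supply (the identification $P=\P(G,b)$, $A=I_G$, $a=b\ne0$, and the empty-intersection convention) is exactly the framing the paper sets up at the start of Section~\ref{facelattPGb}.
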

Note that $\F(\P(G,b))$ and $\facets(\P(G,b))$ are the face lattice and set of facets, respectively, of $\P(G,b)$,
using the notation of Section~\ref{facelatt}.
\begin{proof}
This result follows by applying~\eqref{suppX} to $\P(G,b)$.
\end{proof}
\begin{proposition}
For any subset $X$ of $\P(G,b)$, and any face $F$ of $\P(G,b)$,
\begin{equation}\label{XFprop}X\subset F\text{ if and only if }\gr(X)\subset\gr(F).\end{equation}
\end{proposition}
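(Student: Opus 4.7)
The plan is to apply the general polytope result~\eqref{suppiff} directly to $\P(G,b)$, following exactly the same pattern used in the immediately preceding proposition (where~\eqref{suppX} was applied to obtain~\eqref{grXprop}). Specifically, taking $P$ to be $\P(G,b)$ in its form~\eqref{IMF}, with $N=E$, $M=V$, $A=I_G$ and $a=b$, equation~\eqref{suppiff} reads: for any $X\subset\P(G,b)$ and any $F\in\F(\P(G,b))$, one has $X\subset F$ if and only if $\supp(X)\subset\supp(F)$. This provides the ``polytope half'' of the equivalence we want.

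The second step is to translate the support-containment $\supp(X)\subset\supp(F)$ into the graph-containment $\gr(X)\subset\gr(F)$. By the definitions~\eqref{grX}--\eqref{grx}, both $\gr(X)$ and $\gr(F)$ are spanning subgraphs of $G$, so they automatically share the common vertex set $V$, and their edge sets are exactly $\supp(X)$ and $\supp(F)$ respectively. Hence by the definition~\eqref{subgraph} of containment for spanning subgraphs of $G$, the relation $\gr(X)\subset\gr(F)$ is equivalent to $\supp(X)\subset\supp(F)$. Chaining the two equivalences yields~\eqref{XFprop}.

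I do not expect any real obstacle: the statement is a straightforward graph-theoretic reformulation of a standard support-based characterization of containment in faces of a polyhedron cut out by nonnegativity and linear equality constraints, and it should be proved in essentially one line by invoking~\eqref{suppiff} together with the support-to-graph dictionary. The only point worth a brief remark in the write-up is that the spanning-subgraph convention is what makes the translation between edge sets and graphs a trivial bijection, so that~\eqref{subgraph} applies without any further hypotheses on $X$ or $F$.
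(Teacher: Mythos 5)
Your proof is correct and matches the paper's own argument, which likewise proves the proposition by applying~\eqref{suppiff} to $\P(G,b)$; the translation from support containment to graph containment via~\eqref{grX}--\eqref{grx} and~\eqref{subgraph} is exactly the intended (and immediate) final step.
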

\begin{proof}
This result follows by applying~\eqref{suppiff} to $\P(G,b)$.
\end{proof}
Note that~\eqref{suppXx} and~\eqref{suppiff2} could also easily be applied
to $\P(G,b)$, giving special cases of~\eqref{grXprop} and~\eqref{XFprop}, respectively, in which $X$ contains a single element.

\begin{theorem}\label{vertth2}
Let $u$ be an element of $\P(G,b)$, and $H$ be the graph of $u$.  Then the following are equivalent.
\begin{list}{(\roman{listnumber})}{\usecounter{listnumber}\setlength{\labelwidth}{8mm}\setlength{\leftmargin}{10mm}\setlength{\itemsep}{1mm}}
\item $u$ is a vertex of $\P(G,b)$.
\item $u$ is the only element of $\P(G,b)$ whose graph is $H$.
\item $u$ is the only element of $\P(G,b)$ whose graph is contained in $H$.
\end{list}
\end{theorem}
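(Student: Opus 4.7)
The plan is to derive Theorem~\ref{vertth2} as a direct application of the support-based vertex characterization~\eqref{vertcond} of Section~\ref{facelatt} to the polytope $\P(G,b)$, written in the incidence matrix form~\eqref{IMF}. Since the present section assumes $b\ne0$, the hypotheses needed for the Section~\ref{facelatt} machinery to apply to $\P(G,b)$ are in place.

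Under the identifications $N=E$, $M=V$, $A=I_G$ and $a=b$ in~\eqref{polytope}, the definition~\eqref{grx} together with the containment relation~\eqref{subgraph} yield the elementary translations that, for any $x\in\P(G,b)$, $\gr(x)=H$ if and only if $\supp(x)=\supp(u)$, and $\gr(x)\subset H$ if and only if $\supp(x)\subset\supp(u)$.

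By~\eqref{vertcond}, $u$ is a vertex of $\P(G,b)$ if and only if $\{x\in\P(G,b)\mid\supp(x)=\supp(u)\}\subset'\{u\}$, and equivalently if and only if $\{x\in\P(G,b)\mid\supp(x)\subset\supp(u)\}\subset'\{u\}$. Since $u$ itself lies in each of these two sets, the $\subset'$ containment is automatically an equality in both cases, so the two conditions become, respectively, the statements that $u$ is the only element of $\P(G,b)$ with $\supp(x)=\supp(u)$, and that $u$ is the only element of $\P(G,b)$ with $\supp(x)\subset\supp(u)$. Translating via the graph/support correspondence above, these are precisely (ii) and (iii), so we obtain (i)$\Leftrightarrow$(ii) and (i)$\Leftrightarrow$(iii), which yields the theorem.

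The proof is essentially a transcription of an already-established polytope result into graph language, so no substantive obstacle is anticipated. The only point that calls for mild care is the handling of the symbol $\subset'$, namely observing that $u$ belongs to each of the two sets under consideration, so the $\subset'$ relation collapses to an equality and matches the ``only element'' phrasing in (ii) and (iii).
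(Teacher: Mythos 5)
Your proposal is correct and follows essentially the same route as the paper, which likewise proves Theorem~\ref{vertth2} by applying~\eqref{vertcond} to $\P(G,b)$ under the identifications $N=E$, $M=V$, $A=I_G$, $a=b$ and translating supports into graphs via~\eqref{grx} and~\eqref{subgraph}. Your additional remark that $u$ lies in each of the two sets, so that $\subset'$ collapses to equality and matches the ``only element'' phrasing, is a correct and worthwhile clarification of a detail the paper leaves implicit.
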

Note that the the implication of (ii) by (i) in this theorem is also given in Theorem~\ref{explvert}.

Cases of this theorem applied to $\N_{\le Z}(R)$, $\N(R)$, $\N_{\le Z}(R,S)$ and~$\N(R,S)$ are given by,
for example, Brualdi~\cite[Thm.~3.1]{Bru76},~\cite[Thms.~8.1.2, 8.1.10, 8.2.1 \& 8.2.6]{Bru06},
Jurkat and Ryser~\cite[p.~348]{JurRys67}, and Klee and Witzgall~\cite[Cor.~2]{KleWit68}.
\begin{proof}This result follows by applying \eqref{vertcond}
to $\P(G,b)$.
\end{proof}

\begin{theorem}\label{faceth1}
Let $U$ be a subset of vertices of $\P(G,b)$.  Then
$U$ is the set of vertices of a face of $\P(G,b)$
if and only if the elements of $U$ are the only vertices of $\P(G,b)$ whose graphs are contained
in the graph of $U$.
\end{theorem}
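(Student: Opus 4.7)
The plan is to obtain this theorem as a direct specialization of equation \eqref{VP2} from Section \ref{facelatt}, using the translation between supports and graphs given by \eqref{grX}--\eqref{grx} and \eqref{subgraph}. Since this section stands under the assumption $b\ne 0$, the results of Section \ref{facelatt} apply to $P=\P(G,b)$ in its form~\eqref{IMF}, with $N=E$, $M=V$, $A=I_G$ and $a=b$.

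First, I would invoke~\eqref{VP2} directly. It asserts that for $U\subset\vert(\P(G,b))$, $U$ is the vertex set of a face of $\P(G,b)$ if and only if
\[
\{u\in\vert(\P(G,b))\mid\supp(u)\subset\supp(U)\}\subset' U,
\]
where $\subset'$ can be read as either $\subset$ or $=$. Next, I would rewrite this support-theoretic condition in graph-theoretic language: by the definitions \eqref{grX}--\eqref{grx}, the edge sets of $\gr(u)$ and $\gr(U)$ are precisely $\supp(u)$ and $\supp(U)$, so by \eqref{subgraph}, $\supp(u)\subset\supp(U)$ is equivalent to $\gr(u)\subset\gr(U)$. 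Substituting this into the condition above yields
\[
\{u\in\vert(\P(G,b))\mid\gr(u)\subset\gr(U)\}\subset U,
\]
which is exactly the statement that the elements of $U$ are the only vertices of $\P(G,b)$ whose graphs are contained in $\gr(U)$.

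No significant obstacle is anticipated; the argument is a routine translation. The only point worth a remark is that the use of $\subset'$ rather than plain~$\subset$ is harmless here because $\gr(u)\subset\gr(U)$ automatically holds for every $u\in U$ (since $\gr(U)=\bigcup_{u\in U}\gr(u)$ by~\eqref{grXgrx}), so the reverse containment is free. Thus the theorem follows.
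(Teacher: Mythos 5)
Your proposal is correct and takes exactly the same route as the paper, which proves Theorem~\ref{faceth1} by applying~\eqref{VP2} to $\P(G,b)$ and translating supports into graphs via~\eqref{grX}--\eqref{grx} and~\eqref{subgraph}. Your closing remark that the $\subset'$ distinction is harmless because $\gr(u)\subset\gr(U)$ holds automatically for $u\in U$ is a sensible additional observation, consistent with how $\subset'$ is used throughout Section~\ref{facelatt}.
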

\begin{proof}This result follows by applying \eqref{VP2}
to $\P(G,b)$.
\end{proof}

\begin{corollary}\label{edgeth2}
Let $u$ and $w$ be distinct vertices of $\P(G,b)$.  Then
$u$ and $w$ are the vertices of an edge of $\P(G,b)$
if and only if $u$ and $w$ are the only vertices of $\P(G,b)$ whose graphs are contained
in the union of the graphs of $u$ and $w$.
\end{corollary}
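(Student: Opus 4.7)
The plan is to derive this corollary as an immediate specialization of Theorem~\ref{faceth1} to two-element subsets of vertices, following the template used throughout Section~\ref{facelattPGb}. First I would apply Theorem~\ref{faceth1} with $U=\{u,w\}$: this gives that $\{u,w\}$ is the vertex set of a face of $\P(G,b)$ if and only if $u$ and $w$ are the only vertices of $\P(G,b)$ whose graphs are contained in $\gr(\{u,w\})$.

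Next I would translate the condition ``$\{u,w\}$ is the vertex set of a face'' into ``$u$ and $w$ are the vertices of an edge.'' Since $u\ne w$, a face with vertex set $\{u,w\}$ has positive dimension; being the convex hull of its vertices, it is exactly the line segment $[u,w]$, and hence a one-dimensional face. Conversely, any edge with endpoints $u$ and $w$ has $\{u,w\}$ as its vertex set. So the two conditions coincide.

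Finally, I would rewrite $\gr(\{u,w\})$ in the form stated in the corollary: by~\eqref{grXgrx}, one has $\gr(\{u,w\})=\gr(u)\cup\gr(w)$, and substituting this produces the claimed characterization. An equally short alternative would be to bypass Theorem~\ref{faceth1} and apply the general polytope edge condition~\eqref{edgecond} directly to $\P(G,b)$, translating support containment into graph containment via \eqref{grX}--\eqref{grx} together with \eqref{subgraph}. There is no real obstacle here: the proof is a transparent specialization of a standard polytope fact to the graph-theoretic language of $\P(G,b)$, and the only minor point worth spelling out is the equivalence between a face with exactly two vertices and an edge.
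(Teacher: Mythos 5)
Your proposal is correct and matches the paper's proof exactly: the paper derives the corollary from Theorem~\ref{faceth1} with $U=\{u,w\}$, and notes in parentheses the same alternative of applying~\eqref{edgecond} directly to $\P(G,b)$. The extra point you spell out about a two-vertex face being an edge is a fine (if routine) clarification.
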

\begin{proof}This result follows from Theorem~\ref{faceth1} by taking $U$ to be $\{u,w\}$.  (It
also follows by applying~\eqref{edgecond} to $\P(G,b)$.)
\end{proof}

\begin{theorem}\label{faceth}
Let $H$ be a spanning subgraph of $G$.  Then the following are equivalent.
\begin{list}{(\roman{listnumber})}{\usecounter{listnumber}\setlength{\labelwidth}{8mm}\setlength{\leftmargin}{10mm}\setlength{\itemsep}{1mm}}
\item $H$ is a graph of $\P(G,b)$.
\item $H$ is the graph of a subset of $\P(G,b)$.
\item $H$ is the graph of an element of $\P(G,b)$, or $H$ has no edges.
\item $H$ is the graph of a face of $\P(G,b)$.
\item $H$ is a union of graphs of vertices of $\P(G,b)$.
\item $H$ is the union of the graphs of all vertices of $\P(G,b)$ whose graph is contained in~$H$.
\end{list}
\end{theorem}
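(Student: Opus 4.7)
The plan is to derive the equivalences (i)--(vi) by applying the multiple equivalent characterizations of $\S(P)$ from \eqref{SPdef} and \eqref{SP} of Section~\ref{facelatt} to the polytope $P = \P(G,b)$ (which is legitimate since $b \ne 0$ is standing throughout Section~\ref{facelattPGb}), and then translating back and forth between supports of subsets or elements of~$\P(G,b)$ and graphs of~$\P(G,b)$ via the correspondences~\eqref{grX}--\eqref{grx}. The essential observation is that, for a spanning subgraph $H$ of $G$ with edge set $E_H$, the containment $\gr(X) \subset H$ is the same as $\supp(X) \subset E_H$, so each of (ii)--(vi) translates directly into one of the equivalent descriptions of $\S(\P(G,b))$.

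First I would note that (i) $\Leftrightarrow$ (ii) is immediate from the definition~\eqref{GGbdef} of $\G(G,b)$. For the other equivalences, I would pass to edge sets and match them up: (iv) corresponds to the defining form~\eqref{SPdef} of $\S(P)$; (ii), (v) and (vi) correspond respectively to the first, third and fourth forms on the RHS of~\eqref{SP}. For (vi) in particular, the translated condition is $E_H = \bigcup_{u \in \vert(\P(G,b)),\,\supp(u) \subset E_H} \supp(u)$, which is precisely the equality case of the $\subset'$ clause in~\eqref{SP}, the reverse containment being automatic. Since all four of these sets coincide by~\eqref{SP}, the equivalences among (ii), (iv), (v), (vi) fall out at once.

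The only step calling for a brief additional remark is (iii), whose translation is $E_H \in \{\supp(x) \mid x \in \P(G,b)\} \cup \{\emptyset\}$, matching the second form on the RHS of~\eqref{SP}. I would justify the extra disjunct ``$H$ has no edges'' by noting that the assumption $b \ne 0$ forces every $x \in \P(G,b)$ to have nonempty support: $\supp(x) = \emptyset$ would mean $x = 0$ and hence $b = I_G\,x = 0$, a contradiction. Thus the edgeless spanning subgraph is a graph of~$\P(G,b)$ (take $X = \emptyset$, for which $\supp(\emptyset) = \emptyset$) but is not the graph of any single element, and so must be accommodated by the ``no edges'' alternative in (iii).

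I do not anticipate any serious obstacle: the theorem is essentially a dictionary result, transporting~\eqref{SPdef}--\eqref{SP} from supports to graphs, and the only place where mild care is needed is the treatment of the edgeless case for~(iii). No new polytope-theoretic or combinatorial input beyond Section~\ref{facelatt} should be required.
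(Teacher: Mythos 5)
Your proposal is correct and follows essentially the same route as the paper: the paper likewise gets (i)$\Leftrightarrow$(ii) from the definition of $\G(G,b)$ and obtains the remaining equivalences by applying the chain of equalities in~\eqref{SPdef} and~\eqref{SP} to $\P(G,b)$, matching (iv), (ii), (iii), (v), (vi) to the respective descriptions of $\S(P)$ exactly as you do (with \eqref{suppXsuppx} supplying the union-of-vertex-supports form for~(v)). Your extra remark on the edgeless case in (iii) is a correct unpacking of the $\cup\{\emptyset\}$ term already built into~\eqref{SP}.
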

Note that a seventh equivalent condition, which depends only on~$H$and~$b$, without
any reference to $\P(G,b)$, will be added to this list in Theorem~\ref{suppcond}.
\begin{proof}The equivalence of conditions (i) and (ii) is simply the definition of a graph of~$\P(G,b)$ (as given after~\eqref{GGbdef}).
The equivalence of condition (iv) and each of conditions~(ii),~(iii),~(v) or~(vi) follows by applying
the equality between the first set, $\S(P)$, in~\eqref{SP} (as defined in~\eqref{SPdef})
and each of the other four sets in~\eqref{SP} (using~\eqref{suppXsuppx} in the third of these), respectively, to~$\P(G,b)$.
\end{proof}
It follows from~\eqref{GGbdef} and Theorem~\ref{faceth} that the set of graphs of $\P(G,b)$ can now be written as
\begin{align}\notag\G(G,b)&=\{\gr(X)\mid X\subset\P(G,b)\}\\
\notag&=\{\gr(x)\mid x\in\P(G,b)\}\cup\{\emptyset\}\\
\notag&=\{\gr(F)\mid F\in\F(\P(G,b))\}\\
\notag&\textstyle=\{\bigcup_{u\in U}\gr(u)\mid U\subset\vert(\P(G,b))\}\\
\label{GGb}&\textstyle=\Bigl\{\text{spanning subgraphs }H\text{ of }G\Bigm|
H=\bigcup_{\substack{u\in\vert(\P(G,b))\\\gr(u)\subset H}}\gr(u)\Bigr\}.\end{align}
\begin{theorem}\label{facelattth}
The face lattice~$\F(\P(G,b))$ of $\P(G,b)$ is isomorphic to the set~$\G(G,b)$ of graphs of~$\P(G,b)$
partially ordered by inclusion.

The natural isomorphism between these lattices maps
each face $F\in\F(\P(G,b))$ to its graph $\gr(F)$, and inversely maps each graph
$H\in\G(G,b)$ to the face $\{x\in\P(G,b)\mid\gr(x)\subset H\}=
\{x\in\P(G,b)\mid x_e=0$ for each $e\in E$ which is not an edge of $H\}$.
In terms of vertices of faces, $F\in\P(G,b)$ is mapped to the union of the
graphs of the vertices of $F$, i.e., $\gr(F)=\bigcup_{u\in\vert(F)}\gr(u)=\gr(\vert(F))$, and
$H\in\G(G,b)$ is mapped to the face whose vertices are $\{u\in\vert(P)\mid\gr(u)\subset H\}$.

For any $\H\subset\G(G,b)$, the meet of~$\H$ is the union of all those graphs of $\G(G,b)$
which are contained in each graph in~$\H$, or alternatively the union of the graphs of all those vertices of~$\P(G,b)$
whose graphs are contained in each graph in~$\H$,
and the join of $\H$ is the union of all the graphs in $\H$, i.e.,
\begin{align}\label{infH}\inf(\H)&=\bigcup_{\substack{H\in\G(G,b)\\H\subset\cap_{H'\in\H}H'}}\!\!H\;
=\bigcup_{\substack{u\in\vert(\P(G,b))\\\gr(u)\subset\cap_{H'\in\H}H'}}\!\gr(u),\\
\sup(\H)&=\bigcup_{H\in\H}H.\end{align}
\end{theorem}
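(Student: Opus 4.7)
The plan is a direct translation of the lattice-isomorphism results for general polytopes from Section~\ref{facelatt}, applied to $\P(G,b)$ in the form~\eqref{IMF} (with $N$, $M$, $A$, $a$ taken as $E$, $V$, $I_G$, $b$, respectively), and rephrased in the graph language of~\eqref{grX}--\eqref{grx}.  The assumption $b\ne 0$, imposed throughout this section, is exactly the hypothesis $a\ne 0$ required in Section~\ref{facelatt}.  The key observation is that sending a subset of $E$ to the spanning subgraph of $G$ with that edge set is a bijection between subsets of $E$ and spanning subgraphs of $G$, under which $\supp(X)$ corresponds to $\gr(X)$ for any $X\subset\P(G,b)$, and hence (by~\eqref{SPdef} and~\eqref{GGbdef}) $\S(\P(G,b))$ corresponds to $\G(G,b)$.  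Moreover, by~\eqref{graphint}--\eqref{subgraph}, this bijection converts unions, intersections and containments of edge sets into unions, intersections and containments of spanning subgraphs, so it is in particular a poset isomorphism.

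First, I would transport the isomorphism $\F(\P(G,b))\cong\S(\P(G,b))$ from Section~\ref{facelatt} through this bijection.  The map $F\mapsto\supp(F)$ becomes $F\mapsto\gr(F)$, and the inverse $S\mapsto\{x\in\P(G,b)\mid\supp(x)\subset S\}$ becomes $H\mapsto\{x\in\P(G,b)\mid\gr(x)\subset H\}$, which by~\eqref{grx} coincides with $\{x\in\P(G,b)\mid x_e=0\text{ for each }e\in E\text{ which is not an edge of }H\}$.  The vertex-based descriptions then follow by further composing with the isomorphism $\V(\P(G,b))\cong\F(\P(G,b))$ of Section~\ref{facelatt} and invoking~\eqref{grXgrx} to rewrite $\gr(F)=\gr(\vert(F))=\bigcup_{u\in\vert(F)}\gr(u)$; the inverse direction, $H\mapsto\{u\in\vert(\P(G,b))\mid\gr(u)\subset H\}$, is the translation of $S\mapsto\{u\in\vert(P)\mid\supp(u)\subset S\}$.

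Finally, the meet and join formulas are obtained by transporting the corresponding formulas for $\S(\P(G,b))$ from Section~\ref{facelatt}: the join $\sup(\H)=\bigcup_{H\in\H}H$ is immediate from~\eqref{graphunion}, and the two expressions in~\eqref{infH} for $\inf(\H)$ are the translations of the two expressions in~\eqref{infT}, where the convention $\bigcap_{H\in\emptyset}H=G$ from~\eqref{graphint} matches the convention $\inf(\emptyset)=\supp(\P(G,b))$ of Section~\ref{facelatt} and yields $\inf(\emptyset)=\gr(\P(G,b))$.  There is no substantive obstacle: the content of the theorem is already proved in abstract form in Section~\ref{facelatt}, and the work consists entirely of checking that the edge-set/spanning-subgraph bijection is a lattice isomorphism and then rewriting each formula under it.
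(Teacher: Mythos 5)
Your proposal is correct and follows essentially the same route as the paper: the paper's proof simply states that the theorem follows from the discussion in Section~\ref{facelatt} between~\eqref{SP} and Figure~\ref{iso} as applied to $\P(G,b)$, which is precisely the transport of the $\F(P)\cong\S(P)\cong\V(P)$ isomorphisms and the meet/join formulas through the edge-set/spanning-subgraph correspondence that you describe. Your write-up just makes explicit the bookkeeping (via~\eqref{graphint}--\eqref{subgraph} and~\eqref{grXgrx}) that the paper leaves implicit.
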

Note that, for the case $\H=\emptyset$ in~\eqref{infH},
$\bigcap_{H'\in\mathcal{\emptyset}}H'$ can be taken as $G$ or as $\gr(\P(G,b))$, giving $\inf(\emptyset)=\gr(\P(G,b))$.

Note also that the dimension of a nonempty face $F$ of $\P(G,b)$ is given by (ii) of Theorem~\ref{facedim},
with $|\supp(F)|$ in that theorem being simply the number of edges in the graph of $F$.
\begin{proof}All of these results follow from the discussion, in Section~\ref{facelatt}, between~\eqref{SP} and Figure~\ref{iso},
as applied to $\P(G,b)$.
\end{proof}

\section{Further conditions for the graphs of $\mathcal{P}(G,b)$}\label{furthcond}
In this section, additional conditions to those of Theorem~\ref{faceth}, for a spanning subgraph of~$G$ to be a graph of~$\P(G,b)$, are
obtained using Theorems~\ref{bipposth} and~\ref{posth}
from Section~\ref{nonempt}. In contrast to the conditions of Theorem~\ref{faceth}, the conditions of this section depend only
on the spanning subgraph and $b$, and take the form of finitely-many strict inequalities and equalities for
certain sums of entries of $b$.

It is assumed in this section that $b$ is again nonzero.

\begin{theorem}\label{suppcond}
Let $H$ be a spanning subgraph of $G$.  Then the following
condition is equivalent to the six conditions of Theorem~\ref{faceth}.
\begin{list}{(vii)}{\setlength{\labelwidth}{8mm}\setlength{\leftmargin}{10mm}\setlength{\itemsep}{1mm}}
\item $H$ has no edges, or $\sum_{v\in V_1}b_v\ge\sum_{v\in V_3}b_v$ for all
sets $V_1$, $V_2$ and $V_3$ such that $V=V_1\cupdot V_2\cupdot V_3$ and $H[V_2\cup V_3,V_3]=\emptyset$,
with the inequality holding as an equality if and only if $H[V_1,V_1\cup V_2]=\emptyset$.
\end{list}
\end{theorem}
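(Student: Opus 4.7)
The plan is to prove condition (vii) equivalent to condition (iii) of Theorem~\ref{faceth}, namely that $H$ is the graph of an element of $\P(G,b)$ or $H$ has no edges. This is the most direct route, since (iii) has exactly the same ``or $H$ has no edges'' clause as (vii), so only the case in which $H$ has at least one edge requires work.

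First I would reduce ``$H$ is the graph of an element of $\P(G,b)$'' to a statement about strictly positive elements of $\P(H,b)$, where $H$ is regarded as a graph in its own right with vertex set $V$ and edge set $E(H)$. Specifically, if $x \in \P(G,b)$ has $\gr(x) = H$, then $x_e > 0$ for $e \in E(H)$ and $x_e = 0$ for $e \in E \setminus E(H)$, so that the restriction $y \in \R^{E(H)}$ defined by $y_e = x_e$ satisfies $\sum_{e \in \delta_H(v)} y_e = \sum_{e \in \delta_G(v)} x_e = b_v$ for each $v \in V$, and hence $y$ is a strictly positive element of $\P(H,b)$. Conversely, given a strictly positive $y \in \P(H,b)$, extending $y$ by zero on $E \setminus E(H)$ yields an element of $\P(G,b)$ whose graph is exactly $H$. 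Thus, when $E(H)$ is nonempty, condition (iii) holds if and only if $\P(H,b)$ contains a strictly positive element.

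Next I would apply Theorem~\ref{posth}, with the graph there taken to be $H$ (valid since $E(H) \ne \emptyset$), to rewrite the latter condition as: $\sum_{v \in V_1} b_v \ge \sum_{v \in V_3} b_v$ for all sets $V_1,V_2,V_3$ with $V = V_1 \cupdot V_2 \cupdot V_3$ and $H[V_2 \cup V_3, V_3] = \emptyset$, with equality if and only if $H[V_1, V_1 \cup V_2] = \emptyset$. This is precisely the inequality clause of (vii), so the two conditions coincide when $H$ has at least one edge. Combined with the trivial case $E(H) = \emptyset$, this gives the equivalence of (vii) with (iii), and hence with all six conditions of Theorem~\ref{faceth}.

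There is no real obstacle: once the identification of $H$ as a standalone graph is made, the required equivalence is essentially a restatement of Theorem~\ref{posth}. The only point deserving care is that the notation $H[U,W]$ refers to edges of $H$ (not $G$) between vertex subsets $U,W \subset V$, so that the hypotheses and conclusions of Theorem~\ref{posth} applied to $H$ line up literally with the expressions appearing in (vii); the assumption $b \ne 0$ in force throughout this section ensures Theorem~\ref{posth} may legitimately be invoked.
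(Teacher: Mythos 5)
Your proposal is correct and follows essentially the same route as the paper's proof: it establishes the equivalence of (vii) with condition (iii) of Theorem~\ref{faceth} by treating the case of edgeless $H$ trivially, identifying elements of $\P(G,b)$ with graph $H$ with strictly positive elements of $\P(H,b)$ via restriction and extension by zero, and then invoking Theorem~\ref{posth} applied to $H$.
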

A case of this theorem applied to~$\N(R)$ is given by Brualdi~\cite[p.~353]{Bru06}.
\begin{proof}
It will be shown that the condition (vii) of this theorem is equivalent to condition~(iii) in Theorem~\ref{faceth}.
The equivalence of all seven conditions then follows from Theorem~\ref{faceth}.

Denote the edge set of $H$ as $E'$.  If $E'=\emptyset$, then conditions~(iii) and~(vii) are both automatically satisfied.

So, assume that $E'\ne\emptyset$ and that condition~(iii) is satisfied.  Then there exists $x\in\P(G,b)$ with $\gr(x)=H$.
Now define $x'\in\R^{E'}$ by $x'_e=x_e$ for each $e\in E'$.  It can be seen that $x'$ is a strictly positive element of $\P(H,b)$.
Therefore, using Theorem~\ref{posth} (with its graph taken to be~$H$), condition (vii) is satisfied.

Conversely, assume that $E'\ne\emptyset$ and that condition~(vii) is satisfied.  Then, using Theorem~\ref{posth}
(with its graph again taken to be~$H$), $\P(H,b)$ contains a strictly positive element~$x'$.
Now define $x\in\R^E$ by $x_e=x'_e$ for each $e\in E'$, and $x_e=0$ for each $e\in E\setminus E'$.
It can be seen that $x\in\P(G,b)$ and $\gr(x)=H$. Therefore, condition~(iii) is satisfied.
\end{proof}
It follows from Theorem~\ref{suppcond} that the equalities of~\eqref{GGb}, for the set of graphs of $\P(G,b)$, can now be supplemented by
\begin{equation}\label{GGb2}
\G(G,b)=\bigl\{\text{spanning subgraphs }H\text{ of }G\;\big|\;H\text{ satisfies (vii) in Theorem~\ref{suppcond}}\bigr\}.\end{equation}

\begin{theorem}\label{bipsuppcond}Let $H$ be a spanning subgraph of $G$. If $H$ is bipartite, then
the following condition is equivalent to the six conditions of Theorem~\ref{faceth}, and to the condition (vii) in
Theorem~\ref{suppcond}.
\begin{list}{(viii)}{\setlength{\labelwidth}{9mm}\setlength{\leftmargin}{11mm}\setlength{\itemsep}{1mm}}
\item $H$ has no edges, or $\sum_{v\in C}b_v\ge\sum_{v\in V\setminus C}b_v$ for each vertex cover~$C$ of $H$,
with the inequality holding as an equality if and only if $V\setminus C$ is also a vertex cover of~$H$.
\end{list}
\end{theorem}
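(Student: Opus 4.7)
The plan is to imitate the proof of Theorem~\ref{suppcond}, replacing the use of Theorem~\ref{posth} with its bipartite counterpart Theorem~\ref{bipposth}. Since Theorem~\ref{suppcond} already establishes the equivalence of its condition~(vii) with the six conditions of Theorem~\ref{faceth}, it suffices to show that, under the hypothesis that the spanning subgraph~$H$ of $G$ is bipartite, condition~(viii) of the present theorem is equivalent to any one of those conditions. I would choose condition~(iii) of Theorem~\ref{faceth} (namely that $H$ is the graph of an element of $\P(G,b)$, or $H$ has no edges) as the natural bridge.

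First I would dispose of the trivial case $E(H)=\emptyset$, in which both (iii) and (viii) hold automatically. So assume $H$ has at least one edge. The key observation is the correspondence, already used in the proof of Theorem~\ref{suppcond}, between strictly positive elements of $\P(H,b)$ and elements $x\in\P(G,b)$ with $\gr(x)=H$: given a strictly positive $x'\in\P(H,b)$, extend it by zero on $E\setminus E(H)$ to get an $x\in\P(G,b)$ with graph $H$, and conversely the restriction of any such $x$ to $E(H)$ is a strictly positive element of $\P(H,b)$. Thus condition~(iii) is equivalent to the statement that $\P(H,b)$ contains a strictly positive element.

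Now I apply Theorem~\ref{bipposth} directly to the bipartite graph~$H$ (this is where the hypothesis that $H$ is bipartite is essential): $\P(H,b)$ contains a strictly positive element if and only if $\sum_{v\in C}b_v\ge\sum_{v\in V\setminus C}b_v$ for every vertex cover $C$ of $H$, with equality exactly when $V\setminus C$ is also a vertex cover of $H$. This is precisely condition~(viii), completing the equivalence.

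There is no real obstacle here; the result is essentially a repackaging of Theorem~\ref{bipposth} in the same way that Theorem~\ref{suppcond} repackages Theorem~\ref{posth}. The only thing to be careful about is that the vertex cover condition is stated relative to $H$, not $G$, so one must apply Theorem~\ref{bipposth} with its graph taken to be $H$ (and with $V$ still the common vertex set), mirroring exactly the step in the proof of Theorem~\ref{suppcond} where Theorem~\ref{posth} is applied with its graph taken to be~$H$.
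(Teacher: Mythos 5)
Your proof is correct and takes essentially the same approach as the paper: both hinge on applying Theorem~\ref{bipposth} with its graph taken to be the spanning subgraph~$H$, characterizing when $\P(H,b)$ has a strictly positive element. The only cosmetic difference is that the paper deduces (viii)$\Leftrightarrow$(vii) directly from Theorems~\ref{bipposth} and~\ref{posth} and then cites Theorem~\ref{suppcond}, whereas you re-run the extension-by-zero bridge to condition~(iii); both routes are valid and rest on the same ingredients.
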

Note that if $(U,W)$ is a bipartition for $H$, then
condition~(viii) in this theorem is equivalent to the condition that $H$ has no edges, or
$\sum_{v\in U_1}b_v+\sum_{v\in W_1}b_v\ge\sum_{v\in U_2}b_v+\sum_{v\in W_2}b_v$ for all sets~$U_1$,~$U_2$,~$W_1$ and~$W_2$
such that $U=U_1\cupdot U_2$, $W=W_1\cupdot W_2$ and $H[U_2,W_2]=\emptyset$, with the inequality holding
as an equality if and only if $H[U_1,W_1]=\emptyset$.
Also, this condition remains unchanged if its inequality is replaced by
$\sum_{v\in U_1}b_v\ge\sum_{v\in W_2}b_v$, or by $\sum_{v\in W_1}b_v\ge\sum_{v\in U_2}b_v$.
The reasons for these equivalences are discussed briefly after the statements of Theorems~\ref{bipnonemptth}
and~\ref{bipposth} (where the graph in those remarks should now be taken to be~$H$).
\begin{proof}
It follows from Theorems~\ref{bipposth} and~\ref{posth} (taking the graph in each theorem to be $H$) that if~$H$ is bipartite, then
condition~(vii) in Theorem~\ref{suppcond} and condition~(viii) in this theorem are equivalent.  The equivalence of these two conditions to
the six conditions in Theorem~\ref{faceth} is then given by Theorem~\ref{suppcond}.
\end{proof}

\begin{corollary}\label{bipsuppcond1}
Let $G$ be bipartite, and $H$ be a spanning subgraph of $G$. Then
the six conditions of Theorem~\ref{faceth}, condition (vii) in
Theorem~\ref{suppcond}, and condition (viii) in Theorem~\ref{bipsuppcond} are all equivalent.
\end{corollary}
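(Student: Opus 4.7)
The plan is to observe that this corollary is essentially an immediate specialization of Theorem~\ref{bipsuppcond}, obtained by noting that bipartiteness is hereditary for subgraphs. The key point is that if $G$ is bipartite, then any spanning subgraph $H$ of $G$ is also bipartite (since $H$ inherits any bipartition of $G$, or equivalently, since $H$ contains no odd-length cycles because $G$ contains none).

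First, I would note that $H$ being a spanning subgraph of the bipartite graph $G$ forces $H$ itself to be bipartite. This is the only substantive content that the hypothesis $G$ bipartite contributes, since conditions (vii) and (viii) refer only to $H$ and $b$.

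Second, with $H$ now known to be bipartite, Theorem~\ref{bipsuppcond} applies directly, giving the equivalence of condition (viii) with condition (vii) of Theorem~\ref{suppcond} and with the six conditions of Theorem~\ref{faceth}. No further work is required.

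There is essentially no obstacle: the corollary is a one-line consequence of Theorem~\ref{bipsuppcond} together with the observation that subgraphs of bipartite graphs are bipartite. The proof is a routine invocation of the previously established theorem under the strengthened hypothesis on $G$.
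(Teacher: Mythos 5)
Your proposal is correct and matches the paper's own proof exactly: both reduce the corollary to Theorem~\ref{bipsuppcond} via the single observation that any spanning subgraph of a bipartite graph is bipartite. Nothing further is needed.
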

A case of this theorem applied to~$\N(R,S)$ is given by Brualdi~\cite[p.~343]{Bru06}.
\begin{proof}
This result follows immediately from Theorem~\ref{bipsuppcond}, since any spanning subgraph of a bipartite graph is bipartite.
\end{proof}

\end{document}